\documentclass{amsart}
\usepackage[T1]{fontenc}
\usepackage[utf8]{inputenc}
\usepackage{amsmath,amsfonts,amssymb,amsthm}
\usepackage{mathtools}
\usepackage{tensor}
\usepackage{pgf,tikz}
\usetikzlibrary{matrix,arrows,calc}
\usepackage{tikz-cd}
\usepackage{shuffle}
\usepackage{url}
\usepackage{xcolor}
\numberwithin{equation}{section}
\numberwithin{table}{section}

\usepackage{caption} 
\usepackage{enumitem}

\usepackage[colorinlistoftodos]{todonotes}

\definecolor{darkgreen}{rgb}{0,0.5,0}
\usepackage[
colorlinks, citecolor=darkgreen,
]{hyperref}
\usepackage{cleveref}

\newtheorem{thm}{Theorem}
\newtheorem{prop}[thm]{Proposition}
\newtheorem{lemma}[thm]{Lemma}
\newtheorem{cor}[thm]{Corollary}

\theoremstyle{remark}
\newtheorem{rem}[thm]{Remark}
\newtheorem{remarks}[thm]{Remarks}
\newtheorem{example}[thm]{Example}

\theoremstyle{definition}
\newtheorem{defn}[thm]{Definition}

\AddToHook{env/defn/begin}{\crefalias{thm}{defn}}
\AddToHook{env/prop/begin}{\crefalias{thm}{prop}}
\AddToHook{env/lemma/begin}{\crefalias{thm}{lemma}}
\AddToHook{env/conjecture/begin}{\crefalias{thm}{conjecture}}
\AddToHook{env/cor/begin}{\crefalias{thm}{cor}}
\AddToHook{env/rem/begin}{\crefalias{thm}{rem}}
\AddToHook{env/remarks/begin}{\crefalias{thm}{remarks}}
\AddToHook{env/example/begin}{\crefalias{thm}{example}}

\numberwithin{thm}{section}

\newcommand{\one}{\mathbf{1}}
\newcommand{\bA}{\mathbb A}

\newcommand{\rC}{\mathrm C}
\newcommand{\bC}{\mathbb C}
\newcommand{\rd}{\mathrm d}
\newcommand{\rf}{\mathrm{f}}
\newcommand{\bF}{\mathbb F}
\newcommand{\fg}{\mathfrak g}
\newcommand{\bG}{\mathbb G}
\newcommand{\cT}{\mathcal T}
\newcommand{\rH}{\mathrm H}
\newcommand{\cH}{\mathcal H}

\newcommand{\bN}{\mathbb N}
\newcommand{\fn}{\mathfrak n}
\newcommand{\MT}{\mathrm{MT}}
\newcommand{\bP}{\mathbb P}
\newcommand{\fp}{\mathfrak p}

\newcommand{\bQ}{\mathbb Q}
\newcommand{\bZ}{\mathbb Z}
\newcommand{\cO}{\mathcal{O}}

\newcommand{\fu}{\mathfrak{u}}
\newcommand{\rZ}{\mathrm{Z}}
\newcommand{\fz}{\mathfrak{z}}

\newcommand{\op}{\mathrm{op}}
\newcommand{\ProFinVect}{\mathrm{ProFinVect}}

\newcommand{\llangle}{\langle\!\langle}
\newcommand{\rrangle}{\rangle\!\rangle}

\newcommand{\gp}{{\mathrm{gp}}}
\newcommand{\real}{{\mathrm{real}}}
\newcommand{\mot}{{\mathrm{mot}}}
\newcommand{\inv}{{\mathrm{inv}}}
\newcommand{\Tors}{{\mathrm{Tors}}}
\newcommand{\Ind}{{\mathrm{Ind}}}
\newcommand{\Pro}{{\mathrm{Pro}}}
\newcommand{\Proj}{{\mathrm{Proj}}}

\newcommand{\et}{\operatorname{\acute{e}t}}
\newcommand{\Gal}{\mathrm{Gal}}

\newcommand{\Sh}{\mathrm{Sh}}
\newcommand{\Vect}{\mathrm{Vect}}
\newcommand{\Betti}{\mathrm{Betti}}
\newcommand{\MTR}{\mathrm{MTR}}
\newcommand{\lto}{\longrightarrow}

\DeclareMathOperator{\ab}{ab}
\DeclareMathOperator{\met}{met}
\DeclareMathOperator{\ad}{ad}

\DeclareMathOperator{\Aut}{Aut}
\DeclareMathOperator{\can}{can}

\DeclareMathOperator{\cris}{cris}
\DeclareMathOperator{\Der}{Der}
\DeclareMathOperator{\Ext}{Ext}

\DeclareMathOperator{\gr}{gr}

\DeclareMathOperator{\dR}{dR}
\DeclareMathOperator{\pr}{pr}
\DeclareMathOperator{\id}{id}

\DeclareMathOperator{\res}{res}
\DeclareMathOperator{\loc}{loc}
\DeclareMathOperator{\Li}{Li}
\DeclareMathOperator{\Lie}{Lie}
\DeclareMathOperator{\Log}{Log}

\DeclareMathOperator{\MF}{MF}

\DeclareMathOperator{\adm}{adm}
\DeclareMathOperator{\Hom}{Hom}

\DeclareMathOperator{\PL}{PL}
\DeclareMathOperator{\Rep}{Rep}
\DeclareMathOperator{\Sel}{Sel}
\DeclareMathOperator{\Spec}{Spec}

\newcommand{\pathtorsor}[3][]{\tensor*[_{#2}]{\Pi}{_{#3}^{#1}}}
\newcommand{\pth}[3][]{\tensor*[_{#2}]{p}{_{#3}^{#1}}}

\mathchardef\mhyphen="2D %

\usepackage[
	backend=biber,
	style=alphabetic,
	minalphanames=3,
	maxnames=99,
	maxalphanames=4,
	giveninits=true,
	isbn=false,
	doi=true,
]{biblatex}
\title{The motivic Selmer scheme of the thrice-punctured line}

\author{Martin Lüdtke}
\address{Martin Lüdtke,
	Institut für Mathematik,
	Carl von Ossietzky Universität Oldenburg,
	26111 Oldenburg,
	Germany
}
\email{martin.luedtke@uol.de}

\begin{document}

\begin{abstract}
	Let $X = \bP^1 \smallsetminus \{0,1,\infty\}$ be the thrice-punctured over a ring of $S$-integers $\cO_{K,S}$ in a number field~$K$. For any quotient $\pi_1^{\mot}(X,0) \twoheadrightarrow \Pi$ of Deligne--Goncharov's motivic fundamental group there is an associated Selmer scheme which parametrises $\Pi$-torsors with a mixed Tate motive structure. We give several descriptions of the motivic Selmer scheme which make it amenable to computations, using $\bG_m$-equivariant cocycles of algebraic groups, Lie algebras, and complete Hopf algebras. We prove that the Selmer scheme is isomorphic to an affine space $\bA^N_{\bQ}$, and we construct coordinates realising this isomorphism. This is a key ingredient for making the motivic Chabauty--Kim method explicit in a general setting, without restrictions on the base field or the choice of fundamental group quotient.
\end{abstract}

\maketitle

\tableofcontents
\thispagestyle{empty}

\section{Introduction}
\label{sec: introduction}

Let $K$ be a number field and let $\cO_{K,S}$ be the ring of $S$-integers for a finite set of primes of~$K$. Let $X/\Spec(\cO_{K,S})$ be a regular $S$-integral model of a smooth hyperbolic curve over~$K$. The Chabauty--Kim method provides a framework for studying the set $X(\cO_{K,S})$ of $S$-integral points using the arithmetic of non-abelian fundamental groups. The key object in this method is the \emph{Selmer scheme}. In the original formulation by Kim \cite{kim:motivic,kim:albanese}, one chooses an auxiliary prime~$p$ and a base point $b \in X(\cO_{K,S})$, and defines the Selmer scheme as an affine $\bQ_p$-scheme whose points parametrise $G_K$-equivariant torsors under the $\bQ_p$-prounipotent étale fundamental group. The idea can be summarised as follows. Every $S$-integral point~$\alpha \in X(\cO_{K,S})$ gives rise to a point of the Selmer scheme via the \emph{path torsor}. Using a similar construction for local points, one obtains a commutative diagram
\[
\begin{tikzcd}
	X(\cO_{K,S}) \rar[hook] \dar["j"] & \prod_{\fp \mid p} X(\cO_{\fp}) \dar["\prod_{\fp \mid p} j_{\fp}"] \\
	\{\text{global Selmer scheme}\} \rar["\loc_p"] & \prod_{\fp \mid p} \{\text{local $\fp$-adic Selmer scheme}\}
\end{tikzcd}
\]
in which the vertical \emph{Kummer maps} are given by the path torsor construction. In order for a tuple of local points $(z_{\fp})_{\fp} \in \prod_{\fp \mid p} X(\cO_{\fp})$ to come from a global $S$-integral point, it is necessary that its image under the local Kummer map lies in the image of the global Selmer scheme under the localisation map. This produces strong constraints on the location of $X(\cO_{K,S})$ inside the local points. In some cases it is possible to make this explicit and derive a set of Coleman analytic functions on $\prod_{\fp \mid p} X(\cO_{\fp})$ whose vanishing locus contains or is even equal to the set of $S$-integral points. In practice, one usually works with finite-dimensional quotients of the fundamental group, such as quotients along the descending central series, in which case the localisation map becomes an algebraic map of finite type affine $\bQ_p$-schemes.

In this paper we restrict attention to the global side. In the case $X = \bP^1 \smallsetminus \{0,1,\infty\}$, the existence of the category of mixed Tate motives over~$K$ and the construction of the \emph{motivic fundamental group} $\pi_1^{\mot}(X,0)$ by Deligne and Goncharov \cite{deligne-goncharov} makes it possible to define a \emph{motivic Selmer scheme} over~$\bQ$ and a motivic Kummer map. The purpose of this paper is to give a systematic description of the motivic Selmer scheme and, in particular, to make its structure sufficiently explicit for computations. We show that for any finite-dimensional quotient $\pi_1^{\mot}(X,0) \twoheadrightarrow \Pi$, the associated Selmer scheme $\Sel_{S,\Pi}^{\mot}(X)$ is isomorphic to $\bA^N$ for some~$N \in \bZ_{\geq 0}$, and compute the dimension for various natural choices of $\Pi$. Moreover, we present a general construction of coordinate functions which make the isomorphism $\Sel_{S,\Pi}^{\mot}(X) \cong \bA^N$ explicit. This lays the groundwork for the Chabauty--Kim calculations contained in upcoming work with Li and Kim (over number fields, using the polylogarithmic quotient of the fundamental group) and with Corwin and Dan-Cohen (over $\bQ$, using the full fundamental group).

We begin by giving a simple definition of the motivic Selmer scheme as a moduli space of $\Pi$-torsors in the category $\MT(\cO_{K,S},\bQ)$ of $\bQ$-linear mixed Tate motives over~$\cO_{K,S}$. We prove its independence of the base point in a more general setup of fundamental groupoids in Tannakian categories. We show that the motivic Selmer scheme provides a $\bQ$-form of the étale Selmer scheme over~$\bQ_p$, generalising a result from \cite{BKL:chabauty-kim-sc} to the number field setting. As the definition of the motivic Selmer scheme as a moduli space of torsors is not suitable for computations, we then give various other more concrete descriptions: using algebraic group cohomology for the Tannaka group $G_S^{\MT}$ of $\MT(\cO_{K,S},\bQ)$; via $\bG_m$-equivariant algebraic cocycles $U_S^{\MT} \to \Pi^{\omega}$ on the unipotent radical $U_S^{\MT} \subseteq G_S^{\MT}$; via graded Lie algebra cocycles $\Lie(U_S^{\MT}) \to \Lie(\Pi^{\omega})$; and finally via graded Hopf algebra cocycles $\cH(U_S^{\MT}) \to \cH(\Pi^{\omega})$. We also discuss the motivic Kummer map in terms of Selmer scheme coordinates.

\subsection*{Acknowledgements}

I would like to thank Alex Betts, David Corwin, Ishai Dan-Cohen, Xiang Li, Minhyong Kim, and Steffen Müller for their encouragement and interest in the topics of this paper. 
The author is supported by a Minerva Fellowship of the Minerva Stiftung Gesellschaft für die Forschung mbH and also acknowledges support through a guest postdoc fellowship at the Max Planck Institute for Mathematics in Bonn and through an NWO Grant, project number VI.Vidi.192.106.

\section{The motivic Selmer scheme as a moduli space of torsors}
\label{sec:selmer-scheme-moduli-space}

We start by giving a definition of the motivic Selmer scheme as a moduli space parametrising torsors under the motivic fundamental group.

\subsection{Mixed Tate motives}
\label{sec:mixed_tate_motives}

Let $K$ be a number field with ring of integers $\cO_K$. Let $S$ be a finite set of primes of $K$ and let $\cO_{K,S} = \cO(\Spec(\cO_K) \smallsetminus S)$ be the ring of $S$-integers.
Denote by
\[ \MT(\cO_{K,S}, \bQ) \]
the $\bQ$-linear Tannakian category of mixed Tate motives over $K$ which are unramified outside $S$. This category was constructed by Deligne--Goncharov~\cite{deligne-goncharov}. It contains a distinguished object~$\bQ(1)$ whose tensor powers $\bQ(i) \coloneqq \bQ(1)^{\otimes i}$, $i \in \bZ$, are called the \emph{Tate motives}. They form the simple objects of $\MT(\cO_{K,S}, \bQ)$. Every mixed Tate motive $M \in \MT(\cO_{K,S},\bQ)$ carries a canonical increasing weight filtration $W_i M$ indexed by the even integers and characterised by the fact that the graded piece $\gr_{2i}^W(M)$ is a direct sum of copies of $\bQ(-i)$ for all $i \in \bZ$. 

The category of mixed Tate motives is equipped with various realisation functors to other Tannakian categories:
\begin{enumerate}
	\item For each complex embedding $\sigma\colon K \hookrightarrow \bC$ there is a Betti realisation functor to finite-dimensional $\bQ$-vector spaces:
	\[ \real_{\Betti,\sigma}\colon \MT(\cO_{K,S}, \bQ) \to \bQ\mhyphen\Vect_{\rf} \]
	\item For each prime~$p$ there is a $p$-adic étale realisation functor to the category of continuous finite-dimensional $\bQ_p$-linear representations of the absolute Galois group $G_K = \Gal(\overline{K}/K)$:
	\[ \real_{\et,p}\colon \MT(\cO_{K,S}, \bQ) \to \Rep_{\bQ_p}(G_K). \]
	
	\item There is a de Rham realisation functor to the category of finite-dimensional $K$-vector spaces:
	\[ \real_{\dR}\colon \MT(\cO_{K,S}, \bQ) \to K\mhyphen\Vect_{\rf}. \]
	
	\item For each prime $v \not\in S$, there is a filtered $\varphi$ realisation functor to the category of admissible filtered $\varphi$-modules over~$K_v$:
	\[ \real_{F\varphi,v}\colon \MT(\cO_{K,S}, \bQ) \to \MF^{\varphi,\adm}_{K_v}. \]
\end{enumerate}

The different realisation functors are related via various comparison isomorphisms but we will not need them in this paper.

\subsection{Algebraic geometry in a Tannakian category}
\label{sec:alg_geom_in_tannakian_cat}

Let $F$ be a field and let $\cT = (\cT, \otimes, \one)$ be an $F$-linear Tannakian category. The language of ``algebraic geometry in a Tannakian category'' \cite[§5]{deligne:droite-projective} provides an elegant way of equipping $F$-algebras, and dually affine $F$-schemes, with extra structure. By definition, a \emph{ring in $\cT$} is an ind-object $A$ of $\cT$ equipped with a unit $\one \to A$ and multiplication $A \otimes A \to A$ making it a commutative monoid object of the ind-category of $\cT$. Any $F$-algebra $R$ can be viewed as a ring in~$\cT$ by taking sums of the unit object. An \emph{affine $F$-scheme in~$\cT$} is formally of the form $\Spec(A)$ with~$A$ a ring in~$\cT$, and the morphisms $\Spec(B) \to \Spec(A)$ are of the form $\Spec(f)$ with $f\colon A \to B$ a homomorphism of rings in~$\cT$.

\begin{example}
	\label{ex: k-vect category}
	When $\cT$ is the category of finite-dimensional $F$-vector spaces, then an affine scheme in $\cT = F\mhyphen\Vect_{\rf}$ is simply an affine $F$-scheme in the usual sense. Indeed, an ind-object $A$ of $F\mhyphen\Vect_{\rf}$ is just an $F$-vector space (not necessarily finite-dimensional), and a unit map $F \to A$ and multiplication map $A \otimes_F A \to A$ satisfying the axioms of a commutative monoid object give~$A$ the structure of an $F$-algebra. Reversing arrows, this makes $\Spec(A)$ an affine $F$-scheme in the usual sense.
\end{example}

\begin{example}
	\label{ex: representation category}
	Let $\cT = \Rep(G)$ be the $F$-linear Tannakian category of representations of a pro-algebraic group~$G$ over~$F$. An affine $F$-scheme in~$\Rep(G)$ is an affine $F$-scheme~$X$ in the usual sense equipped with a $G$-action $G \times X \to X$ over~$F$. Any affine $F$-scheme can be viewed as an affine $F$-scheme in $\Rep(G)$ by endowing it with the trivial $G$-action.
\end{example}

Various notions from algebraic geometry, such as faithfully flat morphisms, fibre products, affine group schemes and torsors can be defined intrinsically in a Tannakian category; see \cite[§5]{deligne:droite-projective} for details.

\subsection{The motivic fundamental group of the thrice-punctured line}
\label{motivic-fundamental-group}

Let $S$ be a finite set of primes in a number field~$K$. Let $X = \bP^1_{\cO_{K,S}} \smallsetminus \{0,1,\infty\}$ be the thrice-punctured line over the ring of $S$-integers. The (unipotent) \emph{motivic fundamental group} of Deligne and Goncharov \cite{deligne-goncharov} is a pro-unipotent group
\[ \pi_1^{\mot}(X,0) \] 
in $\MT(\cO_{K,S}, \bQ)$, in the sense of §\ref{sec:alg_geom_in_tannakian_cat}. That is, the ring of functions $\cO(\pi_1^{\mot}(X,0))$ has the structure of an ind-mixed Tate motive. When writing~$0$ for the base point we really mean the tangential base point $\vec{1}_0$ of~$X$ at~$0$, given by the tangent vector $\partial /\partial t$ in terms of the coordinate $t$ on~$\bA^1$. Upon taking realisations, $\pi_1^{\mot}(X,0)$ specialises to the various other incarnations of the fundamental group:
\begin{enumerate}
	\item Taking the Betti realisation with respect to a complex embedding $\sigma\colon K \hookrightarrow \bC$ yields the (unipotent) Betti fundamental group:
	\[ \real_{\Betti,\sigma}(\pi_1^{\mot}(X,0)) = \pi_1^{\mathrm{Betti}}(X_{\sigma}(\bC), 0). \]
	The latter agrees with the Malcev completion of the topological fundamental group $\pi_1^{\mathrm{top}}(X_{\sigma}(\bC),0)$. Here, $X_{\sigma}(\bC)$ denotes the space of $\bC$-valued points of $X$ with respect to the embedding $\sigma\colon K \hookrightarrow \bC$. We refer to \cite[§15]{deligne:droite-projective} for how to define the fundamental group with respect to a tangential base point.
	\item Taking the $p$-adic étale realisation yields the $\bQ_p$-pro-unipotent étale fundamental group:
	\[ \real_{\et,p}(\pi_1^{\mot}(X,0)) = \pi_1^{\et,\bQ_p}(X_{\overline{K}},0). \]
	The latter is defined as the Tannaka group of the category of unipotent $\bQ_p$-local systems on $X_{\overline{K}}$. It carries a natural action by $G_{K}$ induced by the action on $X_{\overline{K}}$.
	\item Taking the de Rham realisation yields the unipotent de Rham fundamental group:
	\[ \real_{\dR}(\pi_1^{\mot}(X,0)) = \pi_1^{\dR}(X,0). \]
	The latter is defined as the Tannaka group of the category of unipotent vector bundles with connection on~$X$.
	\item Taking the filtered $\varphi$ realisation for $v \not\in S$ yields the crystalline fundamental group
	\[ \real_{F\varphi,v}(\pi_1^{\mot}(X,0)) = \pi_1^{\cris}(X_{\bF_v},0). \]
	The latter is defined as the Tannaka group of the category of unipotent isocrystals on $X_{\bF_v}$, where $\bF_v$ denotes the residue field of~$v$. The Hodge filtration comes from the crystalline/de Rham comparison isomorphism \cite{chiarelotto-le-stum} $\pi_1^{\cris}(X_{\bF_v},0) \otimes_{K_{v,0}} K_v \cong \pi_1^{\dR}(X,0) \otimes_K K_v$.
	
\end{enumerate}

It is possible to use other base points in the definition of the motivic fundamental group. More generally, one can define a motivic \emph{path space} from $x$ to $y$,
\[ \pi_1^{\mot}(X; x,y), \]
whenever $x$ and $y$ are $S$-integral base points of~$X$.

\begin{defn}
	\label{def base point}
	An \emph{$S$-integral base point} of $X = \bP^1_{\cO_{K,S}} \smallsetminus \{0,1,\infty\}$ is either an $S$-integral point $x \in X(\cO_{K,S})$ or a nowhere vanishing section of the tangent bundle over $\cO_{K,S}$ at one of the three cusps.
\end{defn}

For $S$-integral base points $x$ and $y$, the motivic path space $\pi_1^{\mot}(X; x,y)$ is a pro-affine scheme in $\MT(\cO_{K,S}, \bQ)$. These path spaces form the \emph{motivic fundamental groupoid} of~$X$. The groupoid structure consists of:
\begin{enumerate}
	\item constant paths $\ast = \Spec(\bQ) \to \pi_1^{\mot}(X, x)$;
	\item path composition $\pi_1^{\mot}(X;y,z) \times \pi_1^{\mot}(X;x,y) \to \pi_1^{\mot}(X;x,z)$;
	\item inversion $\pi_1^{\mot}(X;x,y) \to \pi_1^{\mot}(X;y,x)$.
\end{enumerate}
All these maps are motivic, i.e., morphisms of pro-affine schemes in $\MT(\cO_{K,S}, \bQ)$. They satisfy the expected properties: associativity of path composition, cancellation of inverses etc.

\subsection{Definition of the motivic Selmer scheme}
\label{sec:motivic Selmer scheme}

Let 
\[ \pi_1^{\mot}(X,0) \twoheadrightarrow \Pi \]
be a quotient of the motivic fundamental group in $\MT(\cO_{K,S}, \bQ)$. As discussed in §\ref{sec:alg_geom_in_tannakian_cat}, we have a notion of $\Pi$-torsors, intrinsic to the category of mixed Tate motives. Namely, a \emph{$\Pi$-torsor} in $\MT(\cO_{K,S}, \bQ)$ over a $\bQ$-algebra~$R$ is an affine $K$-scheme~$P$ in $\MT(\cO_{K,S},\bQ)$, faithfully flat over~$R$, with a right $\Pi$-action $P \times \Pi \to P$ such that the map $P \times \Pi \to P \times_R P$, $(p,\gamma) \mapsto (p, p\gamma)$ is an isomorphism. Here, $R$ is viewed as a ring in $\MT(\cO_{K,S}, \bQ)$ by taking sums of the unit object. The motivic Selmer scheme is now defined as the moduli space of $\Pi$-torsors in $\MT(\cO_{K,S}, \bQ)$. More precisely:

\begin{defn}
	\label{def:motivic_selmer_scheme}
	The \emph{motivic Selmer scheme} of the thrice-punctured line~$X$ for the fundamental group quotient~$\Pi$ is the affine $\bQ$-scheme representing the functor on $\bQ$-algebras
	\begin{equation}
		\label{eq:selmer-functor}
		\Sel_{S,\Pi}^{\mot}(X) \colon R \mapsto \{\text{$\Pi$-torsors over~$R$ in $\MT(\cO_{K,S},\bQ)$\}}/\text{iso}.
	\end{equation}
\end{defn}

The representability of this functor by an affine $\bQ$-scheme is proved in \cite{brown:integral_points}. In Section~\ref{sec:representability} below, we will give a different proof showing the more precise statement that the motivic Selmer functor is representable by a (possibly infinite-dimensional) \emph{affine space} over~$\bQ$, and we construct a set of coordinates on this space which can be used in explicit Chabauty--Kim computations.

\subsection{Independence of the base point}
\label{sec:base-point}

In the definition of the Selmer scheme above we have used the particular (tangential) base point $0 = \vec{1}_0$ for the motivic fundamental group $\pi_1^{\mot}(X, 0)$. In this subsection we show that the Selmer scheme is independent of the choice of base point up to canonical isomorphism.
To make this precise, consider a general setting where $X_K/K$ is a smooth curve over a number field $K$, with smooth $S$-integral model $X/\cO_{K,S}$ for a finite set of primes $S$ of $K$, and $\pi_1(X;-,-)$ denotes any theory of fundamental groupoids in a Tannakian category $\cT = (\cT, \otimes, \one)$ over a field $F$. By this we mean the following data:
\begin{enumerate}
 	\item for any two $S$-integral base points $b, b'$ of $X$ (possibly tangential), a pro-affine $\cT$-scheme $\pi_1(X;b,b')$ (\emph{path space});
 	\item laws of composition $\pi_1(X;b',b'') \times \pi_1(X;b,b') \to \pi_1(X;b,b'')$, $(\gamma',\gamma) \mapsto \gamma'\gamma$;
 	\item constant paths $1_b\colon \Spec(\one) \to \pi_1(X,b) \coloneqq \pi_1(X;b,b)$;
 	\item path inversion $\inv \colon \pi_1(X;b,b') \to \pi_1(X;b',b)$.
\end{enumerate}
These data are subject to natural associativity, identity and inversion conditions. We moreover require the groupoid to be \emph{transitive}, i.e., the path objects $\pi_1(X;b,b')$ should be faithfully flat over the point $\Spec(\one)$. Then $\pi_1(X;b,b')$ is a $\pi_1(X,b')$-$\pi_1(X,b)$-bitorsor in~$\cT$.

Examples of Tannakian fundamental groupoids include the following:
\begin{enumerate}
	\item the Betti fundamental groupoid $\pi_1^{\Betti}(X_{\sigma}(\bC); -,-)$ in $\cT = \bQ\mhyphen\Vect_{\rf}$ associated to a complex embedding $\sigma\colon K \hookrightarrow \bC$;
	\item the $\bQ_p$-pro-unipotent étale fundamental groupoid $\pi_1^{\et,\bQ_p}(X_{\overline{K}}; -,-)$ in $\cT = \Rep_{\bQ_p}(G_K)$;
	\item the de Rham fundamental groupoid $\pi_1^{\dR}(X;-,-)$ in $\cT = K\mhyphen\Vect_{\rf}$;
	\item the crystalline fundamental groupoid $\pi_1^{\cris}(X_{\bF_v}; -,-)$ in $\cT = \MF_{K_v}^{\varphi,\adm}$ for a prime $v \not\in S$ of~$K$.
\end{enumerate}
In the case of $X = \bP^1 \smallsetminus \{0,1,\infty\}$ over $\cO_{K,S}$ we also have:
\begin{enumerate}[resume]
	\item the motivic fundamental groupoid $\pi_1^{\mot}(X;-,-)$ in $\cT = \MT(\cO_{K,S}, \bQ)$.
\end{enumerate}

Suppose $b$ and $c$ are two $S$-integral base points of $X/\cO_{K,S}$. Any $\pi_1(X,b)$-torsor $P$ in $\cT$ can be twisted by the path bitorsor $\pi_1(X;c,b)$ to yield a $\pi_1(X,c)$-torsor $P \overset{\pi_1(X,b)}{\times} \pi_1(X;c,b)$, defined as the coequaliser of the maps
\begin{align*}
	P \times \pi_1(X,b) \times \pi_1(X;c,b) &\rightrightarrows P \times \pi_1(X;c,b),\\
	(p,\delta,\gamma) &\mapsto (p\delta, \gamma) \quad \text{resp.} \quad (p,\delta \gamma).
\end{align*}
 This construction makes sense for $\pi_1(X,b)$-torsors over $\bQ$-algebras in the Tannakian category~$\cT$ and thus defines a canonical isomorphism of moduli spaces of torsors
 \begin{equation}
	 	\label{eq: change of base point iso}
	 	\Tors_{\cT}(\pi_1(X,b)) \cong \Tors_{\cT}(\pi_1(X,c)),
	 \end{equation}
 expressing the base point independence of the Selmer scheme for the full fundamental group.
 
 Now suppose $\pi_1(X,b) \twoheadrightarrow \Pi_b$ is a quotient in $\cT$ with kernel $N_b$. We construct a corresponding quotient of $\pi_1(X,c)$ by conjugation with the path space $\pi_1(X;b,c)$, in a sense to be made precise. Given any ring $A$ in $\cT$ and an $A$-valued path $\gamma \in \pi_1(X;b,c)(A)$, conjugation by $\gamma$ defines an isomorphism of affine $\cT$-group schemes over $A$
 \[ \gamma(-)\gamma^{-1}\colon \pi_1(X,b) \otimes A \cong \pi_1(X,c) \otimes A. \]
 The image of $N_b \otimes A$ under this isomorphism is a normal subgroup $\gamma (N_b \otimes A) \gamma^{-1}$ of $\pi_1(X,c) \otimes A$.
 
 \begin{lemma}
	 	\label{lem: conjugate subgroup}
	 	There exists a unique normal subgroup $N_c \subseteq \pi_1(X,c)$ such that for every $\cT$-ring $A$ and any $A$-valued path $\gamma \in \pi_1(X;b,c)(A)$, we have
	 	\[ \gamma (N_b \otimes A)\gamma^{-1} = N_c \otimes A. \]
	 \end{lemma}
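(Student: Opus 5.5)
The strategy is descent along the faithfully flat map $\pi_1(X;b,c) \to \Spec(\one)$: define the subgroup on the universal path and show it descends. Write $P = \pi_1(X;b,c)$ and $A_0 = \cO(P)$, a ring in $\cT$ that is faithfully flat over $\one$ by transitivity. The tautological point $\gamma_{\mathrm{univ}} \in P(A_0)$ gives a normal subgroup
\[
N' \coloneqq \gamma_{\mathrm{univ}} (N_b \otimes A_0) \gamma_{\mathrm{univ}}^{-1} \subseteq \pi_1(X,c) \otimes A_0.
\]
I would show that $N'$ comes from a normal subgroup $N_c \subseteq \pi_1(X,c)$ by checking a descent condition over $A_0 \otimes A_0 = \cO(P \times P)$. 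Uniqueness is immediate once existence holds: if $N_c \otimes A_0 = N_c' \otimes A_0$, then $N_c = N_c'$ by faithfully flat descent, applied to the ideal sheaves, i.e.\ the subobjects of $\cO(\pi_1(X,c))$.

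For the descent condition, consider the two pullbacks of $N'$ to $A_0 \otimes A_0$. These are the conjugates of $N_b$ by the two projections $\gamma_1, \gamma_2 \in P(A_0 \otimes A_0)$. Their agreement amounts to
\[
\gamma_2\gamma_1^{-1}\, \bigl(\gamma_1 (N_b \otimes A_0\otimes A_0) \gamma_1^{-1}\bigr)\, \gamma_1\gamma_2^{-1} = \gamma_2 (N_b \otimes A_0\otimes A_0)\gamma_2^{-1}.
\]
Because $P$ is a bitorsor, $\gamma_1^{-1}\gamma_2 = \delta \in \pi_1(X,b)(A_0\otimes A_0)$. Normality of $N_b$ gives $\delta (N_b\otimes -)\delta^{-1} = N_b \otimes -$, and the two conjugates therefore coincide. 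The cocycle condition on triple overlaps is automatic, since we are descending a closed subscheme and not merely an object with gluing data. Descent of closed subschemes along faithfully flat maps of $\cT$-rings then yields a closed subscheme $N_c$ of $\pi_1(X,c)$.

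It remains to verify the properties of $N_c$. It is a subgroup and it is normal, because these conditions can be checked after the faithfully flat base change to $A_0$, where they hold for $N'$. To get the identity $\gamma(N_b\otimes A)\gamma^{-1} = N_c\otimes A$ for an arbitrary $\gamma\in P(A)$, note that $\gamma$ corresponds to a ring map $A_0 \to A$ sending $\gamma_{\mathrm{univ}}\mapsto\gamma$. Base-changing the equality $N' = N_c\otimes A_0$ along this map gives the claim, using that conjugation and the formation of the image subgroup commute with base change because conjugation is an isomorphism.

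The main obstacle is making faithfully flat descent for closed subschemes rigorous in the ind-category of $\cT$, with rings in $\cT$ rather than ordinary rings. One option is to invoke Deligne's formalism \cite[§5]{deligne:droite-projective}, where fpqc descent holds in $\cT$. Alternatively, one can choose a fibre functor $\omega$ over an extension of $F$ that detects everything faithfully, and reduce to ordinary fpqc descent. In the latter route I would also need to check that $N_c$ is stable under the $\cT$-structure. This follows because the descent datum is a morphism in $\cT$.
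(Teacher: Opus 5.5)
Your proposal is correct and follows essentially the paper's route: conjugate $N_b$ by the tautological path over the faithfully flat ring $A_0 = \cO(\pi_1(X;b,c))$, use normality to see that the two pullbacks to $A_0 \otimes A_0$ agree, and descend. The only variation is in the last step, where you base-change along the classifying map $A_0 \to A$ of $\gamma$, while the paper checks the identity after the faithfully flat extension $A \to A \otimes A_0$, using that the conjugate does not depend on the chosen path; both arguments work.
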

 
 \begin{proof}
	 	If $\gamma$ and $\gamma'$ are two $A$-valued paths, then $\gamma(N_b \otimes A) \gamma^{-1}$ and $\gamma' (N_b \otimes A) \gamma'^{-1}$ are conjugate via the loop $\gamma'\gamma^{-1} \in \pi_1(X,c)(A)$, hence by normality they coincide. Thus, $\gamma (N_b \otimes A) \gamma^{-1} \subseteq \pi_1(X,c) \otimes A$ does not depend on the choice of $\gamma$.
	 	
	 	Assume that $A$ is faithfully flat over $\one$. Then applying the same argument to the $A \otimes A$-valued paths $\pr_1^* \gamma$ and $\pr_2^* \gamma$ (where $\pr_i\colon \Spec(A) \times \Spec(A) \to \Spec(A)$ are the two projections) and using faithfully flat descent shows that $\gamma (N_b \otimes A) \gamma^{-1} \subseteq \pi_1(X,c) \otimes A$ arises by base change from a normal subgroup of $\pi_1(X,c)$.
	 	
	 	Choose $A_0 = \cO(\pi_1(X;b,c))$ and let $\gamma_0 \in \pi_1(X;b,c)(A_0)$ be the tautological path. We assumed the fundamental groupoid in $\cT$ to be transitive, thus $\cO(\pi_1(X;b,c))$ is faithfully flat over $\one$. So there exists a unique normal subgroup $N_c \subseteq \pi_1(X,c)$ such that $\gamma_0(N_b \otimes A_0)\gamma_0^{-1} = N_c \otimes A_0$.
	 	
	 	If $A$ is any other $\cT$-ring and $\gamma$ is an $A$-valued path, then, since $A \to A \otimes A_0$ is faithfully flat, the claimed equality $\gamma(N_b \otimes A)\gamma^{-1} = N_c \otimes A$ can be checked after base change to $A \otimes A_0$, where it follows from the fact that conjugating by the $A \otimes A_0$-valued paths $\gamma \otimes A_0$ or $A \otimes \gamma_0$ results in the same subgroup, i.e., in $N_c \otimes A \otimes A_0$.
	 \end{proof}
 
 \begin{defn}
	 	\label{def: conjugate subgroup in groupoid}
	 	Let $N_b \subseteq \pi_1(X,b)$ a normal subgroup in $\cT$. The unique corresponding normal subgroup $N_c \subseteq \pi_1(X,c)$ from \Cref{lem: conjugate subgroup} is called the \emph{conjugate subgroup}. For a quotient $\pi_1(X,b) \twoheadrightarrow \Pi_b$ in~$\cT$ with kernel~$N_b$, the \emph{corresponding quotient} $\pi_1(X,c) \twoheadrightarrow \Pi_c$ is the one whose kernel~$N_c$ is conjugate to~$N_b$.
	 \end{defn}
 
 \begin{prop}
	 	\label{thm: torsors change of base point}
	 	Let $\pi_1(X,b) \twoheadrightarrow \Pi_b$ and $\pi_1(X,c) \twoheadrightarrow \Pi_c$ be corresponding quotients of the fundamental groups in $\cT$ with respective kernels $N_b$ and $N_c$. Then there is a natural isomorphism $\Tors(\Pi_b) \cong \Tors(\Pi_c)$ making the following square commute:
	 	\[
	 	\begin{tikzcd}
		 		\Tors_{\cT}(\pi_1(X,b)) \rar["\eqref{eq: change of base point iso}"] \dar & \Tors_{\cT}(\pi_1(X,c)) \dar \\
		 		\Tors_{\cT}(\Pi_b) \rar[dashed] & \Tors_{\cT}(\Pi_{c}).
		 	\end{tikzcd}
	 	\]
	 \end{prop}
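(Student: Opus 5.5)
The plan is to twist by the quotient of the path bitorsor $\pi_1(X;c,b)$ by $N_b$, which is a $\Pi_c$-$\Pi_b$-bitorsor, and to show that pushing forward along the quotient maps commutes with twisting. The vertical maps in the square are pushout of torsors along the quotients: for a $\pi_1(X,b)$-torsor $P$ over $R$ in $\cT$, its image is $P/N_b \cong P \overset{\pi_1(X,b)}{\times} \Pi_b$, and similarly at $c$.

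\emph{Step 1: construct the bitorsor.} Define $\pathtorsor{c}{b} \coloneqq \pi_1(X;c,b)/N_b$, the quotient by the right action of $N_b \subseteq \pi_1(X,b)$, with $N_b$ acting by precomposition. Since $\pi_1(X;c,b)$ is a right $\pi_1(X,b)$-torsor, $\pathtorsor{c}{b}$ is a right $\Pi_b$-torsor. The key point is that the left $\pi_1(X,c)$-action descends to a left $\Pi_c$-action, i.e.\ $N_c$ acts trivially. By \Cref{lem: conjugate subgroup}, applied with $A=\cO(\pi_1(X;b,c))$ or with points valued in any $\cT$-ring, we have $N_c\gamma = \gamma N_b$ for any path $\gamma$ (taking inverses appropriately, since $\pi_1(X;c,b)$ is the inverse of $\pi_1(X;b,c)$). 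Hence $n\gamma N_b = \gamma N_b$ for $n \in N_c$. These are identities of morphisms of $\cT$-schemes, and they can be checked on $A$-points for all $\cT$-rings $A$ (Yoneda in the ind-category). The quotient by a normal subgroup exists and is faithfully flat, and faithfully flat descent, as in the proof of \Cref{lem: conjugate subgroup}, shows that $\pathtorsor{c}{b}$ is a $\Pi_c$-$\Pi_b$-bitorsor. Equivalently, $\pathtorsor{c}{b} = N_c\backslash \pi_1(X;c,b)/N_b$.

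\emph{Step 2: define the dashed map.} For a $\Pi_b$-torsor $Q$ over $R$, set $Q \mapsto Q \overset{\Pi_b}{\times} \pathtorsor{c}{b}$ (contracted product, with the conventions of \eqref{eq: change of base point iso}). This is natural in $R$, and it is an isomorphism with inverse given by twisting by the inverse bitorsor $\pathtorsor{b}{c}$. The isomorphism $\pathtorsor{c}{b}\overset{\Pi_b}\times\pathtorsor{b}{c}\cong \Pi_c$ is induced by path composition, exactly as for the full groups.

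\emph{Step 3: commutativity.} For a $\pi_1(X,b)$-torsor $P$, I will exhibit a natural isomorphism
\[ \bigl(P\overset{\pi_1(X,b)}{\times}\pi_1(X;c,b)\bigr)\overset{\pi_1(X,c)}{\times}\Pi_c \;\cong\; \bigl(P\overset{\pi_1(X,b)}{\times}\Pi_b\bigr)\overset{\Pi_b}{\times}\pathtorsor{c}{b}. \]
Both sides are quotients of $P\times\pi_1(X;c,b)$, by associativity of contracted products. Both are identified with $P\overset{\pi_1(X,b)}{\times}\pathtorsor{c}{b}$: the left side via $\pi_1(X;c,b)\overset{\pi_1(X,c)}\times\Pi_c\cong\pathtorsor{c}{b}$, and the right side via $\Pi_b\overset{\Pi_b}\times\pathtorsor{c}{b}\cong\pathtorsor{c}{b}$.

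The main obstacle is expected to be the rigorous handling of the quotients in the Tannakian setting in Step 1 and in the first identification of Step 3. To handle this, I would reduce everything to statements about faithfully flat covers: after base change to $A_0=\cO(\pi_1(X;b,c))$ the path torsors are trivialised by $\gamma_0$, and all claims reduce to the group-theoretic identities $\pi_1(X,c)/N_c\cong\Pi_c$ and the conjugation statement of \Cref{lem: conjugate subgroup}. One then descends using the fact that $A_0$ is faithfully flat over $\one$.
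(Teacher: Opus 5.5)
Your proposal follows the paper's proof. In both, the crux is that quotienting $\pi_1(X;c,b)$ by $N_b$ on one side coincides with quotienting by the conjugate subgroup $N_c$ on the other, checked via $\gamma\delta_c=\delta_b\gamma$ on universal points; this gives the bitorsor under $\Pi_b$ and $\Pi_c$ that one twists by, and your Step~3 just makes explicit the commutativity that the paper leaves implicit in calling this bitorsor simultaneously the pushforward along both quotient maps.

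One cosmetic point: with the paper's composition law, $\pi_1(X;c,b)$ carries a \emph{left} $\pi_1(X,b)$-action and a \emph{right} $\pi_1(X,c)$-action. That is what you actually use in Step~3, and what Step~2 needs for $Q\overset{\Pi_b}{\times}(\cdot)$ to be defined, so the sides in Step~1 should be swapped.
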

 
 \begin{proof}
 	We claim that the quotient of the path space $\pi_1(X;c,b)$ by $N_c$ on the right is canonically isomorphic to the quotient by $N_b$ on the left:
 	\begin{equation}
	 		\label{eq: path space quotient}
	 		\pi_1(X;c,b)/N_c \cong N_b\backslash\pi_1(X;c,b).
	 	\end{equation}
 	The quotient $\pi_1(X;c,b)/N_c$ is by definition the coequaliser in the category of affine $\cT$-schemes of the two maps $\pi_1(X;c,b) \times N_c \rightrightarrows \pi_1(X;c,b)$ given by multiplication and first projection, respectively, and similarly for the quotient by~$N_b$ on the left. Consider the following diagram with coequaliser columns:
 	\[
 	\begin{tikzcd}
	 		\pi_1(X;c,b) \times N_c \dar["\pr_1", shift left] \dar["\mathrm{mult}"', shift right] & N_b \times \pi_1(X;c,b) \dar["\pr_2", shift left] \dar["\mathrm{mult}"', shift right] \\
	 		\pi_1(X;c,b) \dar \rar[equals] & \pi_1(X;c,b) \dar \\
	 		\pi_1(X;c,b)/N_c \rar[dashed] & N_b\backslash \pi_1(X;c,b).
	 	\end{tikzcd}
 	\]
 	For any $\cT$-ring $A$ and $A$-valued points $\gamma \in \pi_1(X;c,b)(A)$ and $\delta_c \in N_c(A)$ we have $\gamma \delta_c = \delta_b \gamma$ with $\delta_b = \gamma \delta_c \gamma^{-1}$ an element in the conjugate subgroup $N_b$. Thus, the dashed arrow is well-defined. By symmetry considerations, it is an isomorphism.
 	
 	Denote the quotient~\eqref{eq: path space quotient} of the path space by $\pi_1(X;c,b) \twoheadrightarrow \tensor*[_b]{\Pi}{_{c}}$. Note that $\tensor*[_b]{\Pi}{_{c}}$ carries a left action by $\Pi_b = \pi_1(X,b)/N_b$ and a right action by $\Pi_c = \pi_1(X,c)/N_c$. It is simultaneously the pushforward of $\pi_1(X;c,b)$ as a left torsor along the quotient map $\pi_1(X,b) \twoheadrightarrow \Pi_b$, and as a right torsor along $\pi_1(X,c) \twoheadrightarrow \Pi_c$. In particular, $\tensor*[_b]{\Pi}{_{c}}$ is a $\Pi_b$-$\Pi_c$-bitorsor. Any $\Pi_b$-torsor $P$ can be twisted by $\tensor*[_b]{\Pi}{_{c}}$ to yield a $\Pi_c$-torsor $P \overset{\Pi_b}{\times} \tensor*[_b]{\Pi}{_{c}}$, which defines the claimed natural isomorphism $\Tors(\Pi_b) \cong \Tors(\Pi_c)$.
\end{proof}

\begin{cor}
	\label{selmer-scheme-indep-of-base-point}
	Let $b$ and $c$ be two $S$-integral base points of $\bP^1 \smallsetminus \{0,1,\infty\}$ and let $\pi_1^{\mot}(X,b) \twoheadrightarrow \Pi_b$ and $\pi_1^{\mot}(X,c) \twoheadrightarrow \Pi_c$ be corresponding quotients in $\MT(\cO_{K,S},\bQ)$ with respective kernels~$N_b$ and~$N_c$. Then there is a canonical isomorphism of Selmer schemes
	\[ \Sel_{S,\Pi_b}^{\mot}(X) \cong \Sel_{S,\Pi_c}^{\mot}(X) \]
	given by twisting with the path torsor 
	\[ \tensor*[_b]{\Pi}{_{c}} \coloneqq \pi_1^{\mot}(X;c,b)/N_c = N_b \backslash \pi_1^{\mot}(X;c,b). \]
\end{cor}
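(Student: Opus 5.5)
This is a direct specialisation of \Cref{thm: torsors change of base point} to the motivic fundamental groupoid. We take $\cT = \MT(\cO_{K,S},\bQ)$, $F = \bQ$, and $\pi_1(X;-,-) = \pi_1^{\mot}(X;-,-)$, the motivic fundamental groupoid of $X = \bP^1_{\cO_{K,S}} \smallsetminus \{0,1,\infty\}$ of Deligne--Goncharov.

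The first step is to check that this groupoid satisfies the hypotheses of the general setup in §\ref{sec:base-point}. The path spaces are pro-affine schemes in $\cT$, and composition, constant paths and inversion are motivic morphisms satisfying the groupoid axioms, as recalled in §\ref{motivic-fundamental-group}. The remaining point is transitivity: $\cO(\pi_1^{\mot}(X;c,b))$ must be faithfully flat over $\one$. I would verify this after a fibre functor, for instance the Betti realisation for some embedding $\sigma$. Faithful flatness can be checked after a faithful exact fibre functor. Under that realisation the path space becomes the Malcev completion of the topological path torsor, which is nonempty and hence a torsor under the pro-unipotent group $\pi_1^{\Betti}$. Its coordinate ring is then faithfully flat over $\bQ$, and in fact a polynomial ring.

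Next, \Cref{lem: conjugate subgroup} and \Cref{def: conjugate subgroup in groupoid} ensure that "corresponding quotients" makes sense in this setting. Applying \Cref{thm: torsors change of base point} gives a natural isomorphism of torsor moduli functors $\Tors_{\cT}(\Pi_b) \cong \Tors_{\cT}(\Pi_c)$. Its proof shows that the isomorphism is given by $P \mapsto P \overset{\Pi_b}{\times} \tensor*[_b]{\Pi}{_{c}}$, where $\tensor*[_b]{\Pi}{_{c}} = \pi_1^{\mot}(X;c,b)/N_c \cong N_b\backslash\pi_1^{\mot}(X;c,b)$ by \eqref{eq: path space quotient}.

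The last step is to pass from functors to Selmer schemes. By \Cref{def:motivic_selmer_scheme}, $\Sel^{\mot}_{S,\Pi_b}(X)$ is exactly the scheme representing the functor $R \mapsto \Tors_{\cT}(\Pi_b)(R)$. Here "$\Pi_b$" is regarded as a quotient of $\pi_1^{\mot}(X,b)$; the definition was stated for base point $0$, but it applies verbatim to any $S$-integral base point. The twisting construction is compatible with base change in $R$, since it is built from fibre products and coequalisers of $\cT$-schemes that commute with $-\otimes R$. It is therefore an isomorphism of functors on $\bQ$-algebras, and by Yoneda an isomorphism of representing schemes. Canonicity holds because the construction depends only on the bitorsor $\tensor*[_b]{\Pi}{_{c}}$ and involves no choice of path.

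The only step with real content is transitivity of the motivic groupoid, in particular when $b$ or $c$ is tangential. I would cite Deligne--Goncharov for the nonemptiness and torsor property of the motivic path spaces rather than reprove it.
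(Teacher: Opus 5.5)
Your proposal is correct and is essentially the paper's proof: the paper's entire argument is that the corollary is \Cref{thm: torsors change of base point} applied to the motivic fundamental groupoid in $\MT(\cO_{K,S},\bQ)$. The paper leaves two points implicit that you spell out, namely transitivity (checked after a fibre functor such as the Betti realisation) and the Yoneda step from an isomorphism of torsor functors to an isomorphism of representing schemes; both are handled correctly.
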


\begin{proof}
	This is \Cref{thm: torsors change of base point} applied to the motivic fundamental groupoid of $\bP^1 \smallsetminus \{0,1,\infty\}$ in the Tannakian category $\MT(\cO_{K,S}, \bQ)$.
\end{proof}

\begin{rem}
	Since \Cref{thm: torsors change of base point} is proved for general Tannakian fundamental groupoids, it applies equally to the Bloch--Kato Selmer scheme $\rH^1_{f,S}(G_K, \Pi^{\et})$ of a smooth curve over~$\cO_{K,S}$. The latter can be defined as the moduli space of $\Pi^{\et}$-torsors in the Tannakian category~$\Rep_{\bQ_p}^{f,S}(G_K)$ of $p$-adic representations of~$G_K$ which are unramified at all places $v \not\in S$ not dividing~$p$ and crystalline at all places dividing~$p$. Here, $p$ is a prime not divisible by the primes in~$S$ and $\Pi^{\et}$ is a quotient of the $\bQ_p$-pro-unipotent étale fundamental group $\pi_1^{\et,\bQ_p}(X_{\overline{K}}, b)$ in~$\Rep_{\bQ_p}^{f,S}(G_K)$. The base point independence of the Bloch--Kato Selmer scheme was previously shown in~\cite[§2.9]{BDCKW}.
\end{rem}

\subsection{Comparison with the étale Selmer scheme}
\label{sec:comparison}

Let $\pi_1^{\mot}(X,0) \twoheadrightarrow \Pi$ be a quotient of the motivic fundamental group in~$\MT(\cO_{K,S},\bQ)$ and fix a rational prime $p$ not divisible by a prime in~$S$. Then the $p$-adic étale realisation $\Pi^{\et}$ of $\Pi$ is a $G_K$-equivariant quotient of the $\bQ_p$-pro-unipotent étale fundamental group $\pi_1^{\et,\bQ_p}(X_{\overline{K}},0) \twoheadrightarrow \Pi^{\et}$. 

\begin{defn}
	\label{def:etale-selmer-scheme}
	For a $\bQ_p$-algebra~$R$, let $\rH^1_{f,S}(G_K, \Pi^{\et})(R)$ be the set of isomorphism classes of $G_K$-equivariant $\Pi^{\et}$-torsors which are crystalline at all primes dividing~$p$ and unramified at all primes $v\not\in S$ with $v\nmid p$. 
	The \emph{étale Selmer scheme} $\rH^1_{f,S}(G_K,\Pi^{\et})$ is the affine $\bQ_p$-scheme representing the functor $R \mapsto \rH^1_{f,S}(G_K, \Pi^{\et})(R)$.
\end{defn}

The fact that the étale Selmer functor is representable by an affine $\bQ_p$-scheme is shown in \cite{kim:motivic}.
In the case $K = \bQ$ it is proved in \cite[§3]{BKL:chabauty-kim-sc} that the motivic Selmer scheme provides a $\bQ$-structure of the étale Selmer scheme.
We briefly sketch how the proof generalises over a general number field.

If $R$ is a $\bQ_p$-algebra and $P$ is a $\Pi$-torsor over~$R$ in $\MT(\cO_{S}, \bQ)$, then the $p$-adic étale realisation $P^{\et}$ is a $G_K$-equivariant $\Pi^{\et}$-torsor over~$R \otimes_{\bQ} \bQ_p$. Base-changing along the multiplication map $R \otimes_{\bQ} \bQ_p \to R$ yields a $\Pi^{\et}$-torsor over~$R$. This construction defines a morphism of $\bQ_p$-schemes from the motivic to the étale Selmer scheme:
\begin{equation}
	\label{eq:motivic-to-etale}
	\Sel_{S,\Pi}^{\mot}(X)_{\bQ_p} \to \rH^1_{f,S}(G_K, \Pi^{\et}).
\end{equation}

\begin{prop}
	\label{motivic-etale-comparison}
	The map \eqref{eq:motivic-to-etale} is an isomorphism.
\end{prop}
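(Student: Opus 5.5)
The plan is to follow the strategy of \cite[§3]{BKL:chabauty-kim-sc}: first establish the statement for the abelian graded pieces (vector groups), then pass to all of $\Pi$ by dévissage along a central series. Since $\Pi$ is a pro-unipotent quotient in $\MT(\cO_{K,S},\bQ)$, it has a filtration by normal motivic subgroups, for instance the weight filtration or the descending central series refined by weight. Its successive quotients are central extensions
\[ 1 \to Z \to \Pi' \to \Pi'' \to 1 \]
with $Z$ a vector group attached to a mixed Tate motive $V$. Both Selmer functors are compatible with such limits, so by passage to the limit I may assume $\Pi$ is finite-dimensional. I would then argue by induction on the length of the filtration.

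\textbf{Abelian case.} For a vector group $Z = V$, the motivic Selmer scheme is the affine space attached to $\Ext^1_{\MT(\cO_{K,S},\bQ)}(\bQ, V)$. Since $\pi_1^{\mot}$ has negative weights, only $\bQ(n)$ with $n \ge 1$ occur in $V$. By Borel and Deligne--Goncharov these Ext groups are $K_{2n-1}(\cO_{K,S})\otimes\bQ$ for $n\geq 2$ and $\cO_{K,S}^\times\otimes\bQ$ for $n=1$, and $\Ext^2$ vanishes. On the étale side, $\rH^1_{f,S}(G_K,\bQ_p(n))$ has the same dimension. For $n=1$ this is Kummer theory. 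For $n\ge 2$ it follows from Soulé's results together with the known cases of the Bloch--Kato conjecture. Moreover $\rH^2$ vanishes in the relevant sense for $n\ge2$; this is where the Soulé/Borel dimension count enters. The étale realisation map on $\Ext^1$ is Soulé's regulator (cycle class) map, which is injective after tensoring with $\bQ_p$. Equal dimensions then give an isomorphism. I would invoke this as known input rather than reprove it.

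\textbf{Induction.} For the central extension above, both Selmer schemes fit into exact sequences of the form
\[ \rH^1(Z) \to \rH^1(\Pi') \to \rH^1(\Pi'') \to \rH^2(Z). \]
Here $\rH^1(Z)$ acts on the fibres of $\rH^1(\Pi') \to \rH^1(\Pi'')$ freely and transitively, and the fibres are nonempty precisely where the obstruction in $\rH^2$ vanishes. On the motivic side $\Ext^2=0$, so the map is surjective and $\Sel(\Pi')\to\Sel(\Pi'')$ is an $\rH^1(Z)$-torsor. On the étale side, for positive weights, $\rH^2_{f,S}$ vanishes (again via Soulé), so the same torsor structure holds; alternatively one can simply check fibrewise. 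The comparison map \eqref{eq:motivic-to-etale} is compatible with these structures. By induction it is an isomorphism on the base $\Sel(\Pi'')_{\bQ_p}\cong \rH^1_{f,S}(G_K,\Pi''^{\et})$ and equivariant for the isomorphism $\rH^1(Z)_{\bQ_p}\cong\rH^1_{f,S}(G_K,Z^{\et})$. A map of torsors over isomorphic bases under isomorphic groups is an isomorphism, and this can be checked functorially on $R$-points. Freeness of the action is automatic because $Z$ is central, and we have no $\rH^0$ terms since $\rH^0(G_K,\bQ_p(n))=0$ for $n\ge1$.

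\textbf{Main obstacle.} The main difficulty is the abelian step over a general number field $K$. It requires knowing that the étale realisation is an isomorphism $\Ext^1_{\MT}(\bQ,\bQ(n))\otimes\bQ_p \cong \rH^1_{f,S}(G_K,\bQ_p(n))$, and that the relevant $\rH^2$ vanishes. These facts rest on Soulé's theorem on the surjectivity of the étale Chern class, together with the Borel rank computation; the vanishing of $\rH^2$ is only known via these results. I would cite them as the input replacing the $K=\bQ$ arguments of \cite{BKL:chabauty-kim-sc}. The remaining care is in the functor-of-points bookkeeping: over general $\bQ_p$-algebras $R$, torsors need not be trivial, and the twisting arguments must be carried out fpqc-locally.
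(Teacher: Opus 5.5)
Your d\'evissage is a sound alternative, but it is not the paper's route. The paper never filters $\Pi$; instead it compares the Galois groups once and for all.

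\'Etale realisation is a Tate functor from $\MT(\cO_{K,S},\bQ)$ to the category $\Rep_{\bQ_p}^{\MT,S}(G_K)$ of mixed Tate representations that are unramified outside $S\cup\{\fp\mid p\}$ and crystalline above $p$. It therefore induces $\rho_{\et}^*\colon G_{K,S}^{\MTR}\to G_{S,\bQ_p}^{\MT}$. The paper then uses the isomorphism $\Ext^1_{\MT(\cO_{K,S},\bQ)}(\bQ(0),\bQ(n))\otimes_{\bQ}\bQ_p\cong\Ext^1_{\Rep_{\bQ_p}^{\MT,S}(G_K)}(\bQ_p(0),\bQ_p(n))$ for $n>0$ from \cite[Prop.~A.1]{BKL:chabauty-kim-sc}; this is exactly the Soul\'e--Borel input of your abelian step. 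By \cite[Lemma~3.6]{BKL:chabauty-kim-sc} this upgrades to $\rho_{\et}^*$ being an isomorphism. One then has $\Sel_{S,\Pi}^{\mot}(X)_{\bQ_p}=\rH^1(G_{S,\bQ_p}^{\MT},\Pi^\omega_{\bQ_p})\cong\rH^1(G_{K,S}^{\MTR},\Pi^\omega_{\bQ_p})\cong\rH^1_{f,S}(G_K,\Pi^{\et})$. The last step holds because Selmer torsors are automatically mixed Tate.

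The paper's argument is uniform in $\Pi$: there is no reduction to finite-dimensional quotients, no induction, and no nonabelian exact sequences over $R$-points. Beyond the $\Ext^1$ comparison, it needs no Galois-side vanishing of $\rH^2$ with local conditions. Vanishing of $\Ext^2$ in $\Rep_{\bQ_p}^{\MT,S}(G_K)$ comes out as a consequence of $\rho_{\et}^*$ being an isomorphism, not as an input. Your route avoids $\Rep_{\bQ_p}^{\MT,S}(G_K)$, its Tannaka group and the `automatically mixed Tate' step, at the price of the torsor bookkeeping.

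If you write yours up, drop the appeal to a vanishing `$\rH^2_{f,S}$', which you never define. Nonemptiness of the \'etale fibres of $\rH^1_{f,S}(G_K,(\Pi')^{\et})\to\rH^1_{f,S}(G_K,(\Pi'')^{\et})$ already follows from the inductive isomorphism on the base together with surjectivity on the motivic side. Simple transitivity of $\rH^1_{f,S}(G_K,Z^{\et})$ on those fibres follows by running the nonabelian cohomology sequence inside the Tannakian category $\Rep_{\bQ_p}^{f,S}(G_K)$, with freeness coming from $\rH^0$ of the twisted forms, as you say.

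In the abelian step, Soul\'e supplies surjectivity of the regulator; injectivity after $\otimes\bQ_p$ is a consequence of the dimension count, not an independent input. That count needs only three things:
\begin{enumerate}
\item $\rH^2(G_{K,S\cup\{p\}},\bQ_p(n))=0$;
\item the global Euler characteristic formula;
\item the observation that for $n\ge 2$ the local Selmer conditions are vacuous.
\end{enumerate}
No cases of Bloch--Kato enter.
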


\begin{proof}
	
	Let $\Rep_{\bQ_p}^{\MT,S}(G_K)$ denote the category of $\bQ_p$-linear mixed Tate representations of $G_K$ which are unramified outside $S \cup \{\fp \mid p\}$ and crystalline at all primes dividing~$p$. This is a mixed Tate category over $\bQ_p$ and $p$-adic étale realisation defines a Tate functor 
	\[ \rho_{\et}\colon \MT(\cO_{K,S},\bQ) \to \Rep_{\bQ_p}^{\MT,S}(G_K), \]
	thus inducing a morphism of Tannakian fundamental groups $\rho_{\et}^*\colon G_{K,S}^{\MTR} \to G_{S,\bQ_p}^{\MT} \coloneqq G_S^{\MT} \otimes_{\bQ} \bQ_p$ \cite[§3.2]{BKL:chabauty-kim-sc}. By \cite[Proposition~A.1]{BKL:chabauty-kim-sc}, the map
	\[ \Ext^1_{\MT(\cO_{S},\bQ)}(\bQ(0), \bQ(n)) \otimes_{\bQ} \bQ_p \to \Ext^1_{\Rep_{\bQ_p}^{\MT,S}(G_K)}(\bQ_p(0),\bQ_p(n)) \]
	induced by $\rho_{\et}$ is an isomorphism for all $n > 0$, which implies by \cite[Lemma~3.6]{BKL:chabauty-kim-sc} that the map $\rho_{\et}^*\colon G_{K,S}^{\MTR} \to G_{S,\bQ_p}^{\MT}$ is an isomorphism. Then we get the desired isomorphism~\eqref{eq:motivic-to-etale} via
	\[ \Sel_{S,\Pi}^{\mot}(X)_{\bQ_p} = \rH^1(G_{S,\bQ_p}^{\MT}, \Pi^{\omega}_{\bQ_p}) \xrightarrow{\sim} \rH^1(G_{K,S}^{\MTR}, \Pi^{\omega}_{\bQ_p}) \cong \rH^1_{f,S}(G_K, \Pi^{\et}), \]
	where the last isomorphism comes from the fact that a $G_K$-equivariant $\Pi^{\et}$-torsor is automatically mixed Tate, see \cite[§3.3]{BKL:chabauty-kim-sc}.
\end{proof}

\section{The motivic Selmer scheme via algebraic group cohomology}
\label{sec:motivic-selmer-scheme-via-algebraic-cocycles}

The definition of the motivic Selmer scheme as a moduli space of torsors is elegant but not very explicit. As a first step towards making this space more amenable to computations, we give a description in terms of nonabelian algebraic group cohomology of the mixed Tate fundamental group.

\subsection{The mixed Tate Galois group of an integer scheme}
\label{sec:MT Galois group}

The category $\MT(\cO_{K,S},\bQ)$ has a canonical $\bQ$-linear fibre functor $\omega$ given by
\begin{equation}
	\label{eq: canonical fibre functor}
	\omega(M) = \bigoplus_{i \in  \bZ} \omega_i(M)
\end{equation}
where
\[ \omega_i(M) \coloneqq \Hom_{\MT(\cO_{K,S},\bQ)} (\bQ(-i), \gr_{2i}^W(M)). \]
We call the grading of $\omega(M)$ provided by \eqref{eq: canonical fibre functor} the grading by \emph{half-weight}, i.e., $\omega_i(M)$ sits in half-weight~$i$. For an object $M$ of $\MT(\cO_{K,S}, \bQ)$, we denote its canonical realisation $\omega(M)$ also by~$M^\omega$. If $K = \bQ$, then $\omega$ coincides with the de Rham realisation functor. In general, the de Rham realisation functor is obtained from~$\omega$ by base change from $\bQ$ to $K$ \cite[Prop.~2.10]{deligne-goncharov}. In other words, $\omega$ provides a $\bQ$-structure for $\real_{\dR}$.

\begin{defn}
	\label{def: mixed Tate Galois group}
	The \emph{mixed Tate Galois group} $G_S^{\MT}$ of $\cO_{K,S}$ is the Tannakian fundamental group of $\MT(\cO_{K,S}, \bQ)$ with respect to the canonical fibre functor:
	\[ G_S^{\MT} \coloneqq \underline{\Aut}^{\otimes}(\omega). \]
\end{defn}

The mixed Tate Galois group $G_S^{\MT}$ is thus a pro-algebraic group over~$\bQ$. More precisely, it is a semidirect product of $\bG_m$ by a free pro-unipotent group. Namely, there is a surjective homomorphism $G_S^{\MT} \twoheadrightarrow \bG_m$ given by the action of $G_S^{\MT}$ on $\bQ(-1)$.\footnote{This is the opposite of the convention in \cite{deligne-goncharov}, where the homomorphism is given by the action on $\bQ(1)$.} The kernel, a pro-unipotent group denoted by $U_S^{\MT}$, is the subgroup of $G_S^{\MT}$ acting trivially on weight-graded pieces. The group $U_S^{\MT}$ is called the \emph{unipotent mixed Tate Galois group}. We have a short exact sequence:
\[ 1 \lto U_S^{\MT} \lto G_S^{\MT} \lto \bG_m \lto 1. \]
The sequence admits a canonical splitting $\bG_m \to G_S^{\MT}$ by associating to $\lambda \in \bG_m$ the automorphism of the fibre functor $\omega(M) = \bigoplus_{i \in  \bZ} \omega_i(M)$ that acts on $\omega_i(M)$ as multiplication by $\lambda^{-i}$ for all $i \in \bZ$. In this way, $G_S^{\MT}$ becomes a semidirect product:
\[ G_S^{\MT} = U_S^{\MT} \rtimes \bG_m. \]

It follows from the vanishing of the second Ext groups $\Ext^2_{\MT(\cO_{K,S}, \bQ)}(\bQ(0), \bQ(n))$ for $n \in \bZ$ \cite[Prop.~2.3]{deligne-goncharov} that $U_S^{\MT}$ is a \emph{free} pro-unipotent group. Equivalently, the Lie algebra $\Lie(U_S^{\MT})$ is free pro-nilpotent. We discuss how to obtain a set of free generators in §\ref{sec:generators of unipotent MT Galois group} below.

\subsection{Tannakian torsors via algebraic group cohomology}
\label{sec:torsors_in_tannakian_categories}

Consider a general Tannakian category $\cT = (\cT, \otimes, \one)$ over a field~$F$. Let $U$ be a pro-unipotent group in~$\cT$. Our aim is to explain how $U$-torsors in~$\cT$ are parametrised by algebraic group cohomology. 
Assume that $\cT$ is neutral with fibre functor $\omega\colon \cT \to F\mhyphen\Vect_{\rf}$. Denote by $G_{\cT} = \underline{\Aut}^{\otimes}(\omega)$ the Tannakian fundamental group of $\cT$ with respect to $\omega$. It acts on the coordinate ring $\cO(\omega(U))$ and hence on the group $\omega(U)$ itself. 

\begin{defn}
	\label{def: cocycle functor}
	Let $R$ be an $F$-algebra. An algebraic map $c\colon G_{\cT,R} \to \omega(U)_R$ is a \emph{cocycle} if it satisfies
	\[ c(g_1 g_2) = c(g_1) \cdot g_1(c(g_2)) \]
	for all $g_1, g_2 \in G_{\cT}$ valued in an $R$-algebra. The set of all cocycles over $R$ is denoted by $\rZ^1(G_{\cT,R}, \omega(U)_R)$. 
	The \emph{functor of cocycles} is defined on $F$-algebras by
	\[ \underline{\rZ}^1(G_{\cT}, \omega(U)): R \mapsto \rZ^1(G_{\cT,R}, \omega(U)_R). \]
\end{defn}

For any $F$-algebra $R$, the group $\omega(U)(R)$ acts from the right on cocycles via
\[ (c.u)(g) = u^{-1} \cdot c(g) \cdot g(u) \]
for $c \in \rZ^1(G_{\cT,R}, \omega(U)_R)$, $u \in \omega(U)(R)$ and $g \in G_{\cT,R}$. Denote by
\[ \rH^1(G_{\cT,R}, \omega(U)_R) \coloneqq  \rZ^1(G_{\cT,R}, \omega(U)_R)/\omega(U)(R) \]
the pointed set of cohomology classes of cocycles and by 
\[ \underline{\rH}^1(G_{\cT}, \omega(U))\colon R \mapsto \rH^1(G_{\cT,R}, \omega(U)_R) \]
the cohomology functor on $F$-algebras.

\begin{prop}
	\label{thm: torsors in tannakian categories via cocycles}
	There is a natural isomorphism of functors on $F$-algebras
	\[ \bigl(\text{isomorphism classes of $U$-torsors in $\cT$}\bigr) \cong \underline{\rH}^1(G_{\cT}, \omega(U)). \]
\end{prop}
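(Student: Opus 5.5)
We reduce the statement to ordinary algebraic geometry by Tannakian duality, and then use the fact that torsors under a pro-unipotent group are trivial over affine bases. By Tannaka duality, $\omega$ induces an equivalence between $\Ind(\cT)$ and the category of (possibly infinite-dimensional) $G_{\cT}$-representations, compatible with tensor products. So rings in $\cT$ correspond to $F$-algebras with a $G_{\cT}$-action, and affine schemes in $\cT$ correspond to affine $F$-schemes with a $G_{\cT}$-action, as in \Cref{ex: representation category}. Since faithful flatness, fibre products and the torsor condition are expressed through tensor products and exactness, they are preserved by this equivalence. Hence a $U$-torsor over $R$ in $\cT$ is the same as a $\omega(U)_R$-torsor $P$ over $\Spec R$, with $R$ carrying the trivial action, together with a $G_{\cT}$-action on $P$ over $R$ compatible with the action on $\omega(U)$: $g(p u) = g(p)\, g(u)$.

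Next we pass from such equivariant torsors to cocycles. Because $\omega(U)$ is pro-unipotent, every $\omega(U)_R$-torsor over an affine base $\Spec R$ is trivial. For a unipotent group we filter by a central series with vector group quotients $\bG_a^n$, and the torsors of these quotients are classified by $\rH^1(\Spec R, \cO) = 0$; an inverse limit (Mittag-Leffler) argument handles the pro-unipotent case. Thus $P(R) \neq \emptyset$, and we choose a point $p \in P(R)$. For $g \in G_{\cT}(R')$ with $R'$ an $R$-algebra, define $c_p(g) \in \omega(U)(R')$ by $g(p) = p \cdot c_p(g)$. Then:
\begin{itemize}
\item Algebraicity of the action makes $c_p\colon G_{\cT,R} \to \omega(U)_R$ an algebraic map.
\item The computation $g_1g_2(p) = g_1(p\, c_p(g_2)) = p\, c_p(g_1)\, g_1(c_p(g_2))$ gives the cocycle identity of \Cref{def: cocycle functor}.
\item Changing $p$ to $pu$ replaces $c_p$ by $u^{-1} c_p\, g(u) = c_p.u$, so the class in $\rH^1$ is well defined.
\item An isomorphism of equivariant torsors carries a chosen point to a chosen point with the same cocycle, so isomorphic torsors give the same class.
\end{itemize}

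Conversely, given a cocycle $c$, we take $P = \omega(U)_R$ with right multiplication and the twisted action $g * x = c(g)\, g(x)$. The cocycle identity makes this an action, and it is compatible with the action on $\omega(U)$. The point $p = 1$ recovers $c$. Any equivariant isomorphism between two such trivialised torsors is left multiplication by some $u$, which matches the coboundary relation. This gives mutually inverse bijections. Both constructions commute with base change $R \to R'$, so the bijections assemble into a natural isomorphism of functors.

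The main obstacle is the first step: checking carefully that the Tannakian equivalence on ind-objects transports faithful flatness and the torsor condition, and that an affine scheme in $\cT$ over a trivial-action $R$ really is a $G_{\cT}$-equivariant $R$-scheme. A secondary point is the triviality of torsors over affine bases for pro-unipotent (not just unipotent) groups. I would handle this by writing $\omega(U) = \varprojlim U_\alpha$ and lifting sections step by step, using that the transition maps $U_{\alpha'} \to U_\alpha$ are smooth surjections with unipotent kernel.
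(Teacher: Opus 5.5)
Your proposal is correct and follows the same route as the paper: apply $\omega$ to obtain a $G_{\cT}$-equivariant $\omega(U)$-torsor and trivialise it using pro-unipotence. You then read off a cocycle from a chosen point, with a change of point giving a cohomologous cocycle, and invert via the cocycle-twisted action on the trivial torsor, checking the torsor axioms after applying $\omega$.

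The one place you go beyond the paper is the triviality of pro-unipotent torsors over affine bases, which the paper simply asserts. A Mittag-Leffler argument needs a countable tower, and the system of finite-dimensional quotients $U_\alpha$ need not be countable when $U$ is not finitely generated. If you write this step out, index the limit instead by the countable tower of lower central series quotients $U/\overline{U^{(n)}}$. Their successive kernels are central pro-vector groups, which have vanishing $\rH^1$ over affine bases.
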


\begin{proof}
	Let $R$ be an $F$-algebra and let $P$ be a $U$-torsor over $R$ in $\cT$. One can check that the realisation $\omega(P)$ is an $\omega(U)$-torsor over $R$ in the usual sense. Since~$U$ is pro-unipotent, the torsor $\omega(P)$ is trivial. Choose a point $p_0 \in \omega(P)(R)$. This determines a cocycle $c \colon G_{\cT,R} \to \omega(U)_R$ via 
	\[ g(p_0) = p_0 \cdot c(g) \]
	for all $g \in G_{\cT}$ valued in an $R$-algebra. A different choice of $p_0$ is of the form $p_0 u$ for a unique $u \in \omega(U)(R)$. Since the action map $\omega(P) \times \omega(U) \to \omega(P)$ is induced by a map in $\cT$, it is $G_{\cT}$-equivariant. Hence we have
	\[ g(p_0 u) = g(p_0) g(u) = p_0 c(g) g(u) = (p_0 u) \cdot u^{-1}c(g) g(u), \]
	for all $g \in G_{\cT,R}$, so the cocycles associated to $p_0$ and $p_0 u$ are cohomologous and the class of the cocycle is well-defined. A similar calculation shows that isomorphic torsors give rise to the same cocycle class.
	
	Conversely, let $c$ be any cocycle over $R$. Let $\tilde{P}_c$ be the trivial $\omega(U)$-torsor $\omega(U)_R$ over $R$ whose left $G_{\cT}$-action $t_c\colon G_{\cT} \to \Aut(\tilde{P}_c)$ is obtained from the usual one by twisting with the cocycle $c$:
	\[ t_c(g)(p) \coloneqq c(g) g(p). \]
	This gives $\tilde{P}_c$ the structure of a $\cT$-scheme, i.e., there is an affine $F$-scheme $P_c$ in~$\cT$ such that $\omega(P_c) = \tilde{P}_c$. Moreover, the structural morphism $\tilde{P}_c \to \Spec(R)$ and the action map $\tilde{P}_c \times \omega(U) \to \omega(U)$ are $G_{\cT}$-equivariant, hence they are morphisms of $\cT$-schemes. The map of $\cT$-schemes $P_c \to \Spec(R)$ is faithfully flat and the map $P_c \times U \to P_c \times_{R_\cT} P_c$, $(p,u) \mapsto (pu,u)$ is an isomorphism. This can be checked after applying the fibre functor by exactness and faithfulness of $\omega$. Hence, $P_c$ is a $U$-torsor over $R$ in $\cT$. 
	
	The constructions are inverse to each other. Indeed, denote by $c_{p_0}$ the cocycle defined by $p_0 \in \omega(P)(R)$ for a $U$-torsor $P$ over $R$. When $P = P_c$ is the torsor associated to a cocycle~$c$, we can choose $p_0 = 1 \in \omega(P_c)(R) = U(R)$ and have
	\[ g(p_0) = t_c(g)(1) = c(g) g(1) = c(g) = 1 \cdot c(g) = p_0 \cdot c(g), \]
	so that $c_{p_0} = c$. Conversely, for any $P$ and $p_0 \in \omega(P)(R)$, the map $\omega(U)_R \to \omega(P)$ given by $u \mapsto p_0 u$ is $G_{\cT}$-equivariant for the $c_{p_0}$-twisted action on $\omega(U)$ and therefore comes from a map $P_{c_{p_0}} \to P$ in $\cT$. This is a map of right $U$-torsors over $R$ and hence automatically an isomorphism: $P_{c_{p_0}} \cong P$.
\end{proof}

\begin{rem}
	For a pro-algebraic group $G$ over a field~$F$ acting on a pro-unipotent group~$U$ one can define the notion of \emph{$G$-equivariant $U$-torsor}. Such an object is given by a right $U$-torsor $P$ with a $G$-action $G \times P \to P$ that is compatible with the $G$-action on~$U$: $g.(pu) = (g.p)(g.u)$. One has an equivalence of groupoids between $G$-equivariant $U$-torsors on the one hand, and $U$-torsors in the Tannakian category $\Rep(G)$ on the other hand. By \Cref{thm: torsors in tannakian categories via cocycles} both are parametrised by the algebraic group cohomology set $\rH^1(G, U)$. This is analogous to the torsor interpretation of first nonabelian group cohomology of a profinite group, cf.\ \cite[Prop.~(1.2.3)]{nsw}.
\end{rem}

\subsection{The Selmer scheme via algebraic group cohomology}
\label{sec:Sel-via-group-cohomology}

Recall that in \Cref{def:motivic_selmer_scheme} the motivic Selmer scheme $\Sel_{S,\Pi}^{\mot}(X)$ for a quotient of the motivic fundamental group $\pi_1^{\mot}(X,0) \twoheadrightarrow \Pi$ was defined as the moduli space of $\Pi$-torsors in the Tannakian category~$\MT(\cO_{K,S}, \bQ)$. 
\Cref{thm: torsors in tannakian categories via cocycles} says that those torsors are parametrised by non-abelian algebraic group cohomology. This gives us a cohomological description of the motivic Selmer scheme:

\begin{thm}
	\label{thm:Selmer scheme via H1}
	There is a canonical isomorphism of functors on $\bQ$-algebras
	\[ \Sel_{S,\Pi}^{\mot}(X) \cong \underline{\rH}^1(G_S^{\MT}, \Pi^{\omega}). \]
\end{thm}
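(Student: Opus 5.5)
The plan is to specialise \Cref{thm: torsors in tannakian categories via cocycles} to the Tannakian category $\cT = \MT(\cO_{K,S},\bQ)$ over $F = \bQ$, with its canonical fibre functor $\omega$ from \eqref{eq: canonical fibre functor}. By \Cref{def: mixed Tate Galois group}, the Tannakian fundamental group is then $G_\cT = G_S^{\MT}$. We take $U = \Pi$, so that $\omega(U) = \Pi^{\omega}$. With these identifications, the right-hand side of \Cref{thm: torsors in tannakian categories via cocycles} is literally $\underline{\rH}^1(G_S^{\MT}, \Pi^{\omega})$. The left-hand side is the functor of isomorphism classes of $\Pi$-torsors in $\MT(\cO_{K,S},\bQ)$, which is exactly the functor \eqref{eq:selmer-functor} defining $\Sel_{S,\Pi}^{\mot}(X)$ in \Cref{def:motivic_selmer_scheme}.

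First I will verify the hypothesis of \Cref{thm: torsors in tannakian categories via cocycles}, namely that $\Pi$ is pro-unipotent in $\cT$. This hypothesis is used in the proof to trivialise $\omega(P)$ over an arbitrary $\bQ$-algebra $R$. The group $\pi_1^{\mot}(X,0)$ is pro-unipotent, as recalled in §\ref{motivic-fundamental-group}. Its quotient $\Pi$ is therefore pro-unipotent too, since quotients of pro-unipotent groups stay pro-unipotent. Also, $\omega$ is exact and faithful, so $\Pi^{\omega}$ is a pro-unipotent quotient of $\omega(\pi_1^{\mot}(X,0))$. As a consequence, every $\Pi^{\omega}$-torsor over an affine base is trivial. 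This holds by the standard vanishing of $\rH^1$ of affine schemes with coefficients in vector groups, applied successively to the central series and passed to the limit.

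Next I will check naturality in $R$, so that the bijections assemble into an isomorphism of functors. For a map $R \to R'$, base change of a torsor $P$ together with a chosen point $p_0$ produces the base-changed cocycle. This gives compatibility with the restriction maps on $\underline{\rH}^1$. Finally, canonicity means independence of choices: the class of a cocycle does not depend on $p_0$, as was shown in the proof of \Cref{thm: torsors in tannakian categories via cocycles}. It remains to say that the isomorphism depends only on the fixed fibre functor $\omega$.

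The only point I expect could be delicate is the passage to the pro-setting. $\Pi$ may be infinite-dimensional, for example $\Pi = \pi_1^{\mot}(X,0)$ itself, and $G_S^{\MT}$ is pro-algebraic, so $\cO(\Pi)$ is an ind-object. To handle this, I would write $\Pi = \varprojlim \Pi_n$ over finite-dimensional unipotent quotients and triviality of torsors follows as a limit of trivial ones. This uses surjectivity of the transition maps, which holds because the kernels are unipotent and affine. If desired, I would note that the proposition was stated for pro-unipotent $U$, so it applies directly. Beyond this, the theorem is an immediate specialisation.
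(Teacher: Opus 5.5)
Your proposal is correct and follows the paper's proof, which is exactly the specialisation of \Cref{thm: torsors in tannakian categories via cocycles} to $\cT = \MT(\cO_{K,S},\bQ)$ with its canonical fibre functor $\omega$, so that $G_{\cT} = G_S^{\MT}$, and $U = \Pi$. Your extra checks are details already implicit in that proposition and its proof: pro-unipotence of $\Pi$, naturality in $R$, and triviality of $\Pi^{\omega}$-torsors over affine bases in the pro-setting.
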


\begin{proof}
	This is \Cref{thm: torsors in tannakian categories via cocycles} applied to the Tannakian category $\MT(\cO_{K,S}, \bQ)$ and pro-unipotent group $\Pi$.
\end{proof}

\section{The motivic Selmer scheme via group cocycles}
\label{sec: motivic Selmer scheme via cocycles}

As a next step towards a more explicit description of the motivic Selmer scheme, we show that every cohomology class in $\rH^1(G_S^{\MT}, \Pi^\omega)$ is represented by a unique cocycle which is trivial on the subgroup $\bG_m \subseteq G_S^{\MT}$. The restriction of such a cocycle to the unipotent part $U_S^{\MT}$ is then $\bG_m$-equivariant. We will use this to identify the motivic Selmer scheme with the space of $\bG_m$-equivariant cocycles $U_S^{\MT} \to \Pi^\omega$, see \Cref{thm: equivariant cocycles description of Selmer scheme}.

\subsection{The $\bG_m$-invariant point of a $\Pi$-torsor}

Fix a quotient $\pi_1^{\mot}(X,0) \twoheadrightarrow \Pi$ of the motivic fundamental group in $\MT(\cO_{K,S}, \bQ)$. Its canonical realisation $\Pi^{\omega}$ is then a pro-unipotent group over~$\bQ$ with an action by the mixed Tate motivic Galois group $G_S^{\MT}$. In particular, the subgroup $\bG_m$ acts on $\Pi^{\omega}$. Let $(\Pi^{\omega})^{\bG_m} \subseteq \Pi^{\omega}$ be the subfunctor of $\bG_m$-invariants, defined on $\bQ$-algebras~$R$.

\begin{lemma}
	\label{thm: fundamental group has trivial invariants}
	The group $\Pi^{\omega}$ has trivial $\bG_m$-invariants:
	\[ (\Pi^{\omega})^{\bG_m} = \{1\}. \]
\end{lemma}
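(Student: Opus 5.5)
We reduce the statement to a weight computation on the coordinate ring. The key input is that $\cO(\pi_1^{\mot}(X,0))$, and hence its quotient Hopf algebra $\cO(\Pi)$, is an ind-mixed Tate motive whose canonical realisation $\cO(\Pi^{\omega})$ is concentrated in half-weights $i \geq 0$. Moreover, the half-weight~$0$ part is exactly the constants $\bQ \cdot 1$. For the full fundamental group this is the statement that $\cO(\pi_1^{\mot}(X,0))$ is the shuffle algebra on words in $e_0,e_1$, with a word of length~$n$ spanning a copy of $\bQ(-n)$. Equivalently, $\omega(\cO(\pi_1^{\mot}(X,0))) = \bigoplus_{n \geq 0} \omega_n$ with $\omega_0 = \bQ$. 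Since $\cO(\Pi) \hookrightarrow \cO(\pi_1^{\mot}(X,0))$ is a sub-ind-motive and $\omega$ is exact and graded, the same holds for $\cO(\Pi^{\omega})$.

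Next we translate $\bG_m$-invariance into a statement about the grading. Recall that $\lambda \in \bG_m$ acts on $\omega_i(M)$ by $\lambda^{-i}$. An $R$-point $x \in \Pi^{\omega}(R)$ is a $\bQ$-algebra homomorphism $x\colon \cO(\Pi^{\omega}) \to R$. The action of $\lambda$ on points is dual to the action on functions, so $x$ is $\bG_m$-invariant precisely when $x(\lambda f) = x(f)$ for all $\lambda \in \bG_m(R')$ and all $R$-algebras $R'$. The correct formulation is that the coaction $\cO(\Pi^\omega) \to \cO(\Pi^\omega)\otimes \bQ[t,t^{-1}]$, composed with $x$, equals $x \otimes 1$. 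Take $f$ homogeneous of half-weight $i$. The condition then reads $x(f)\, t^{-i} = x(f)$ in $R[t,t^{-1}]$. For $i \neq 0$ this forces $x(f) = 0$.

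It follows that $x$ vanishes on $\bigoplus_{i>0} \omega_i$, which is the augmentation ideal, the complement of $\omega_0 = \bQ$. Hence $x$ agrees with the counit, i.e.\ $x = 1$. This argument works functorially in $R$, so the invariant subfunctor is the trivial group.

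The main obstacle is justifying the weight statement, namely nonnegative weights with weight-zero part equal to $\bQ$, for an arbitrary quotient $\Pi$. I would first establish it for $\pi_1^{\mot}(X,0)$ via Deligne--Goncharov's description of $\cO(\pi_1^{\dR})$ as the tensor coalgebra on $\rH^1_{\dR}(X)^\vee$, whose half-weight is given by word length. I would then pass to $\Pi$ using exactness of $\omega$ on the sub-Hopf-algebra $\cO(\Pi)$.
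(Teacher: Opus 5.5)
Your argument is correct, but it runs dually to the paper's.

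\textbf{The paper's route.} It stays on the group side. It filters $\Pi$ by its lower central series and shows each graded piece $V_n$ is pure of weight $-2n$: $V_n$ is a quotient of $V_1^{\otimes n}$, and $V_1$ is a quotient of $\rH_1^{\mot}(X)\cong\bQ(1)^2$. Hence $(V_n^{\omega})^{\bG_m}=0$. It then inducts along $1\to V_n\to\Pi_n\to\Pi_{n-1}\to 1$ and passes to the limit.

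\textbf{Your route.} You work on $\cO(\Pi^{\omega})$ instead. Its half-weights are $\geq 0$ with $\omega_0=\bQ\cdot 1$, so the coaction condition $x(f)t^{-i}=x(f)$ forces an invariant point to kill the augmentation ideal.

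\textbf{What each buys.} Your version needs no induction and no identification $\Pi^{\omega}=\varprojlim\Pi_n^{\omega}$. It handles an arbitrary, even infinite-dimensional, quotient in one step via $\cO(\Pi)\hookrightarrow\cO(\pi_1^{\mot}(X,0))$. The paper's version only ever uses the weight of the abelianisation and never has to describe a coordinate ring. Both rest on the same input, namely purity of $\rH_1^{\mot}(X)$ in weight $-2$. For you it enters through
\[
\gr^W_{2n}\cO(\pi_1^{\mot}(X,0))\cong(I^n/I^{n+1})^{\vee},
\]
where $I$ is the augmentation ideal of $\cO(\pi_1^{\mot}(X,0))^{\vee}$ and $I^n/I^{n+1}$ is a quotient of $\rH_1^{\mot}(X)^{\otimes n}$.

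\textbf{How to justify the weight statement.} That weight-filtration statement is all you need, and it is the safer justification. Identifying the half-weight grading with word length in the basis $f_w$ would use that $e_1$ is $\bG_m$-homogeneous. The paper deduces that from the canonical path, i.e.\ downstream of this lemma.

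A minor slip: $\cO(\pi_1^{\dR})$ is the tensor coalgebra on $\rH^1_{\dR}(X)$, not on its dual.
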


\begin{proof}
	We give an argument based on the weights of the abelian subquotients of~$\Pi$. Denote by $\Pi_n$ the $n$-step nilpotent quotient of $\Pi$ and by $V_n$ be the kernel of the projection $\Pi_n \twoheadrightarrow \Pi_{n-1}$, so that we have short exact sequences
	\begin{equation}
		\label{eq: iterated extension of vector groups}
		1 \to V_n \to \Pi_n \to \Pi_{n-1} \to 1
	\end{equation}
	for $n \geq 1$. 
	The groups $V_n$ are abelian pro-unipotent groups, i.e. pro-vector groups, in $\MT(\cO_{K,S}, \bQ)$. They can be viewed simply as pro-objects of $\MT(\cO_{K,S}, \bQ)$. For $n = 1$, the group $V_1$ is the abelianisation $V_1 = \Pi_1 = \Pi^{\ab}$. It is a quotient of the full abelianised motivic fundamental group $\pi_1^{\mot}(X,0)^{\ab} = \rH_1^{\mot}(X)$, which is isomorphic as a mixed Tate motive to $\bQ(1) \times \bQ(1)$. One way of seeing this is to consider the two maps $z \mapsto z$ and $z \mapsto 1-z$ from $X = \bP^1 \smallsetminus \{0,1,\infty\}$ to $\bG_m$ and check on Betti realisations that the induced map of motives 
	\[ \rH_1^{\mot}(X) \to \rH_1^{\mot}(\bG_m) \times \rH_1^{\mot}(\bG_m) = \bQ(1) \times \bQ(1)\]
	is an isomorphism. As a quotient of $\bQ(1) \times \bQ(1)$, which is pure of weight~$-2$, the group $V_1$ is also pure of weight~$-2$. In particular, $(V_1^{\omega})^{\bG_m} = 0$. For arbitrary $n \geq 1$, the nested commutator map defines a surjection of mixed Tate motives
	\[ V_1^{\otimes n} \twoheadrightarrow V_n, \quad x_1 \otimes \ldots \otimes x_n \mapsto [x_1, [x_2,[\ldots,x_n]]]. \]
	This implies that $V_n$ is pure of weight~$-2n$ and in particular satisfies $(V_n^{\omega})^{\bG_m} = 0$ as well. Now an inductive argument using the short exact sequences~\eqref{eq: iterated extension of vector groups} yields that $\Pi_n^{\omega} = \{1\}$ for all $n \geq 1$, and hence that $(\Pi^{\omega})^{\bG_m} = \varprojlim\, (\Pi_n^{\omega})^{\bG_m} = \{1\}$.
\end{proof}

The goal of this subsection is to show that not only $\Pi$ itself but in fact any $\Pi$-torsor contains a unique $\bG_m$-invariant point:

\begin{thm}
	\label{thm: existence of invariant point}
	Let $P$ be a $\Pi$-torsor in $\MT(\cO_{K,S},\bQ)$ over a $\bQ$-algebra $R$. Then the canonical realisation $P^{\omega}$ contains a unique $\bG_m$-invariant $R$-valued point.
\end{thm}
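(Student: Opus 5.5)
Uniqueness is immediate. If $p_0, p_1 \in P^{\omega}(R)$ are both $\bG_m$-invariant, then $p_1 = p_0 u$ for a unique $u \in \Pi^{\omega}(R)$. The action map $P^\omega \times \Pi^\omega \to P^\omega$ is $G_S^{\MT}$-equivariant, hence $\bG_m$-equivariant. Therefore, for every $\lambda \in \bG_m$ valued in an $R$-algebra, we get $p_0 u = \lambda(p_0 u) = p_0 \lambda(u)$. Freeness of the action gives $\lambda(u) = u$, so $u \in (\Pi^\omega)^{\bG_m}(R) = \{1\}$ by \Cref{thm: fundamental group has trivial invariants}. (I would check that the proof of that lemma works for $R$-valued points, which it does, since the weight argument is $R$-linear.)

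For existence I would translate into cohomology via \Cref{thm: torsors in tannakian categories via cocycles}. Choose any $p \in P^\omega(R)$, which is possible since $\Pi^\omega$ is pro-unipotent, and let $c$ be the associated cocycle. Restricting $c$ to $\bG_m \subseteq G_S^{\MT}$ gives an algebraic cocycle $c|_{\bG_m}\colon \bG_{m,R} \to \Pi^\omega_R$. A point $pu$ is $\bG_m$-invariant exactly when $u^{-1} c(\lambda) \lambda(u) = 1$ for all $\lambda$, i.e.\ when $c|_{\bG_m}$ is a coboundary. So it suffices to show $\rH^1(\bG_{m,R}, \Pi^\omega_R) = \{1\}$.

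I would prove this by dévissage along the tower $\Pi_n$ from the proof of \Cref{thm: fundamental group has trivial invariants}, using the exact sequences $1 \to V_n \to \Pi_n \to \Pi_{n-1} \to 1$. For a vector group $V$ with a linear $\bG_m$-action, $\rH^1(\bG_m, V) = 0$ over any $R$. Indeed, $\bG_m$ is linearly reductive, so Hochschild cohomology of $\bG_m$ with coefficients in any comodule vanishes in positive degrees. Concretely, a cocycle $\lambda \mapsto c(\lambda) = \sum_k \lambda^k c_k$ on the weight decomposition of $V_R$ satisfies $c_k(\lambda^k\mu^k - \lambda^k) = \lambda^k(\mu^k-1)\cdot$\ldots. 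The standard computation shows that $c = $ coboundary of $-c_0$-corrected components, with $c_0 = 0$ forced by the cocycle identity when there is no weight-$0$ part. Here weights are nonzero anyway.

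Given a cocycle into $\Pi_n^\omega$, its image in $\Pi_{n-1}^\omega$ is a coboundary by induction. After modifying by a lift of the corresponding element, which exists since $\Pi_n^\omega \to \Pi_{n-1}^\omega$ is surjective on $R$-points for unipotent groups, the cocycle lands in $V_n^\omega$ and is a cocycle there, hence trivial. Uniqueness at each stage makes the trivialising elements compatible. By the uniqueness argument above applied to $\Pi_n$, the $\bG_m$-invariant points of the pushed-forward torsors $P \times^{\Pi} \Pi_n$ are unique, so they form a compatible system. Their limit is then a $\bG_m$-invariant point of $P^\omega = \varprojlim (P\times^\Pi\Pi_n)^\omega$.

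The main obstacle is the limit step and making sure the cohomology vanishing for $\bG_m$ holds for pro-vector groups $V_n$ over an arbitrary $\bQ$-algebra $R$. I would handle both by working in the pro-finite-dimensional setting, where each $V_n^\omega$ is a limit of finite-dimensional $\bG_m$-representations. Uniqueness removes any $\varprojlim^1$ issues, since the inverse system of invariant points consists of singletons.
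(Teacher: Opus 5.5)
Your proof is correct and follows the paper's argument: uniqueness comes from $(\Pi^{\omega})^{\bG_m}=1$, and existence from $\rH^1(\bG_{m,R},\Pi^{\omega}_R)=1$, proved by d\'evissage along the central series down to the abelian case. The differences are minor streamlinings. You glue the finite levels by uniqueness: each $\Pi_n$ is itself a motivic quotient with trivial $\bG_m$-invariants, so the compatible invariant points are singletons. The paper instead proves surjectivity of the transition maps between the sets of trivialising elements. You also treat the abelian case directly over $R$, where a cocycle into a piece on which $\lambda$ acts by $\lambda^k$ with $k\neq 0$ is $\lambda\mapsto(1-\lambda^k)a$, the coboundary of $-a$. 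Your written version of this computation is muddled, but the claim is right. The paper reduces this step to $R=F$ by flat base change of the Hochschild complex.
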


For the proof of \Cref{thm: existence of invariant point} we need a lemma in nonabelian group cohomology. To state the lemma, recall that a pro-unipotent group is finitely generated if its group of rational points admits a Zariski-dense finitely generated subgroup. This implies that the descending central series quotients are finite-dimensional. Recall moreover that a linear algebraic group over a field is called \emph{linearly reductive} if all its linear representations are semisimple. In characteristic zero, reductive groups are linearly reductive.

\begin{lemma}
	\label{thm: cohomology of reductive group on unipotent group}
	Let $G$ be a linearly reductive group over a field $F$ acting on a finitely generated pro-unipotent group $U$. Then
	\[ \underline{\rH}^1(G, U) = 1. \]
\end{lemma}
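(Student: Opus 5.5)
We must show that every cocycle $c\colon G_R \to U_R$ is a coboundary, i.e.\ $c(g) = u^{-1} g(u)$ for some $u \in U(R)$, for every $F$-algebra $R$. The plan is to induct along the descending central series of $U$ and then pass to the limit. Write $U_n$ for the $n$-step nilpotent quotient of $U$ and $V_n = \ker(U_n \twoheadrightarrow U_{n-1})$. Since $U$ is finitely generated, each $V_n$ is a finite-dimensional vector group. Since the descending central series is characteristic, each $V_n$ is a finite-dimensional $G$-representation and each $U_n$ carries a compatible $G$-action. We have $U = \varprojlim U_n$.

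The abelian case is the following claim: for $V$ a finite-dimensional representation of the linearly reductive group $G$, $\rH^1(G_R, V \otimes R) = 0$. A cocycle $c\colon G_R \to V_R$ is the same as an extension of the trivial representation $R$ by $V_R$, namely $V_R \oplus R$ with $g(v,t) = (g v + t c(g), t)$. A coboundary corresponds to a $G$-equivariant splitting. By linear reductivity, the invariants functor on $G$-modules is exact; this holds for arbitrary (comodule) representations, which are unions of finite-dimensional ones. Tensoring with $R$ over $F$ keeps this true, because $R$ carries trivial action. Hence the surjection $V_R \oplus R \to R$ is surjective on $G$-invariants. Lifting $1$ to an invariant vector $(-u,1)$ gives $c(g) = u - g u$, i.e.\ $c$ is a coboundary. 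Equivalently, one can use that $\rH^1(G, M) = \Ext^1_G(F, M)$ vanishes for comodules over a linearly reductive group, and that these groups commute with flat base change to $R$.

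The nonabelian step uses the exact sequence $1 \to V_n \to U_n \to U_{n-1} \to 1$. Given a cocycle $c_n$ into $U_n$, its image in $U_{n-1}$ is by induction a coboundary $u_{n-1}^{-1} g(u_{n-1})$. Lift $u_{n-1}$ to some $\tilde u \in U_n(R)$; this is possible because $U_n \to U_{n-1}$ is a $V_n$-torsor over an affine base, so it is trivial and has sections. Replacing $c_n$ by $c_n.\tilde u^{-1}$ produces a cocycle with values in $V_n$. Since $V_n$ is central in $U_n$, the cocycle condition restricted to $V_n$ is the abelian one for the representation $V_n$. So by the abelian case it is a coboundary $v^{-1} g(v)$, and hence $c_n$ is a coboundary.

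The limit is the step requiring care, because one needs compatible trivialising elements. For this I would prove the stronger statement: for each $n$, any trivialising $u_{n-1}$ for the image of $c_n$ lifts to a trivialising $u_n$ for $c_n$. The set of trivialising elements of the zero cocycle is $U_n^G(R)$, so this is the surjectivity of $U_n^G(R) \to U_{n-1}^G(R)$. That surjectivity again follows from exactness of invariants applied to the $V_n$-extensions, together with a torsor-lifting argument. With it, one builds $u = (u_n)_n$ inductively and gets $u \in U(R) = \varprojlim U_n(R)$, with $c(g) = u^{-1} g(u)$ because this holds in every quotient.

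I expect the main obstacle to be the abelian base case over an arbitrary $R$, i.e.\ making the exactness of invariants rigorous for $R$-points of algebraic cocycles. The compatible choice in the limit is the other delicate point.
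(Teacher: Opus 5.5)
Your proposal is correct and has the same overall structure as the paper's proof: filtration by the lower central series, an abelian base case from linear reductivity, and compatible lifts of trivialising elements to pass to the limit. It differs in two sub-steps.

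\textbf{The abelian case over $R$.} The paper first reduces to $R=F$. It observes that $\rC^\bullet(G,V)\otimes_F R\cong \rC^\bullet(G_R,V_R)$, so $\rH^1(G_R,V_R)=\rH^1(G,V)\otimes_F R$. It then only needs that the finite-dimensional extension $0\to V\to E\to F\to 0$ splits. You instead work directly with the $R$-extension $V_R\oplus R$ and use exactness of invariants for arbitrary, possibly infinite-dimensional, comodules. This avoids the Hochschild complex but requires the local-finiteness argument you sketch.

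Two small corrections here:
\begin{enumerate}
\item $V_R\oplus R$ with the twisted action is not the base change of an $F$-representation. So the remark about ``tensoring with $R$'' is not why the argument works. What you actually use is that $V_R\oplus R$ is itself an $\cO(G)$-comodule over $F$.
\item If $(-u,1)$ is invariant, then $c(g)=gu-u$, not $u-gu$. This is harmless, and it is exactly the additive form of the paper's coboundary $u^{-1}g(u)$.
\end{enumerate}

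\textbf{The limit step.} Your detour through surjectivity of $U_n^G(R)\to U_{n-1}^G(R)$ is valid. Once the set of trivialising elements of $c_n$ is nonempty, it is a torsor under $U_n^G(R)$, so surjectivity on invariants gives surjectivity on trivialising elements. But the detour is unnecessary. Your own nonabelian step already produces the trivialising element $v\tilde u$ for $c_n$, and it maps to the prescribed $u_{n-1}$. This is precisely the paper's argument that $T_n\to T_{n-1}$ is surjective.

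Your justification that $U_n(R)\to U_{n-1}(R)$ is surjective (a $V_n$-torsor over an affine base is trivial) supplies a point the paper leaves implicit.
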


\begin{proof}
	Let $R$ be an $F$-algebra. We are claiming that $\rH^1(G_R, U_R) = 1$.
	
	\emph{Step 1: Reduction to the finite-dimensional case.}
	Let $U = U^{(1)} \supseteq U^{(2)} \supseteq \ldots$ be the lower central series of $U$ with corresponding quotients $U_n \coloneqq U/U^{(n+1)}$. The $U_n$, being characteristic quotients of $U$, inherit the $G$-action. They are finite-dimensional since $U$ is finitely generated. Assuming the finite-dimensional case, we have $\rH^1(G_R, (U_n)_R) = 1$ for all $n$. Let $c \colon G_R \to U_R$ be a cocycle. Its composition with the quotient map $p_n \colon U_R \to (U_n)_R$ is cohomologous to the trivial cocycle, so there exists $u_n \in U_n(R)$ such that $p_n(c(g)) = u_n^{-1} g(u_n)$ for all $g \in G_R$. Let $T_n$ be the set of all such $u_n$. The $T_n$ form an $\bN$-indexed inverse system of nonempty sets. We claim that the transition maps $T_n \to T_{n-1}$ are surjective. Indeed, given $u_{n-1} \in T_{n-1}$, let $\tilde u_n \in U_n(R)$ be any lift and form the cocycle 
	\[ \tilde c_n\colon G_R \to (U_n)_R, \quad g \mapsto \tilde u_n \, p_n(c(g)) \, g(\tilde u_n^{-1}). \]
	Since $\tilde u_n \mapsto u_{n-1}$, the cocycle takes in fact values $(V_n)_R$, where $V_n \coloneqq U^{(n)}/U^{(n+1)}$ is the kernel of $U_n \twoheadrightarrow U_{n-1}$. Again using the finite-dimensional case we have $\rH^1(G_R, (V_n)_R) = 1$, so that $\tilde c_n(g) = v_n^{-1} g(v_n)$ for some $v_n \in V_n(R)$. Then we can choose $u_n \coloneqq v_n \tilde u_n$ which is contained in $T_n$ and still lifts $u_n$. So the transition maps $T_n \twoheadrightarrow T_{n-1}$ are surjective, hence their limit is nonempty. Let $u = \lim_n u_n \in U(R)$ be an element of the limit, then we have $c(g) = u^{-1} g(u)$ for all $g \in G_R$ and hence $c$ is cohomologous to the trivial cocycle.
	
	\emph{Step 2: Reduction to the abelian case.}
	By Step~1, we may assume that $U$ is finite-dimensional. This implies that the lower central series is finite, i.e.\ $U = U_N$ for $N \gg 0$. Assuming the abelian case, we show by induction on $n$ that $\rH^1(G_R, (U_n)_R) = 1$. The case $n = 0$ is trivial since $U_0 = 1$. For $n \geq 1$, the short exact sequence
	\[ 1 \lto V_n \lto U_n \lto U_{n-1} \lto 1, \]
	leads to an exact sequence of pointed sets
	\[ \rH^1(G_R, (V_n)_R) \lto \rH^1(G_R, (U_n)_R) \lto \rH^1(G_R, (U_{n-1})_R). \]
	The outer terms are trivial since we are assuming the abelian case and the induction hypothesis, so the middle term is trivial as well.
	
	\emph{Step 3: Reduction to the case $R = F$.}
	By Step~2, we may assume that $V \coloneqq U$ is abelian, i.e., a vector group. The $G$-action on $V$ is equivalent to a representation of $G$ on the finite-dimensional vector space $V(F)$. The set $\rH^1(G_R, V_R)$ is a Hochschild cohomology group, as defined in \cite[Appendix B.2]{conrad:reductive_group_schemes}.
	In particular, there is a cochain complex $\rC^{\bullet}(G_R, V_R)$ of $R$-modules such that 
	\[ \rH^n(G_R, V_R) = \rH^n(\rC^{\bullet}(G_R, V_R)). \]
	The $R$-module $\rC^n(G_R, V_R)$ is given by the algebraic maps $G_R^n \to V_R$ over $R$, or equivalently by $V_R(G_R^n) = V(F) \otimes_F \cO(G^n_R)$. The natural map of chain complexes of $R$-modules 
	\begin{equation}
		\label{eq: hochschild cohomology complex base change}
		\rC^{\bullet}(G,V) \otimes_F R \to \rC^{\bullet}(G_R, V_R)
	\end{equation}
	is an isomorphism since in degree $n$ it is given by 
	\[ (V(F) \otimes_F \cO(G^n)) \otimes_F R \lto V(F) \otimes_F \cO(G_R^n). \]
	Taking cohomology in~\eqref{eq: hochschild cohomology complex base change} and using that $R$ is flat over the field~$F$, we get an induced isomorphism
	\[ \rH^n(G, V) \otimes_F R \cong \rH^n(G_R, V_R) \quad \text{ for all $n \geq 0$}. \]
	In particular, choosing $n = 1$, the vanishing of $\rH^1(G, V)$ implies the vanishing of $\rH^1(G_R, V_R)$, so we are reduced to the case $R = F$.
	
	\emph{Step 4: The abelian case over $K$.}
	Let $V$ be a vector group with $G$-action. We have to show $\rH^1(G, V) = 0$. The cohomology group parametrises extensions of $G$-representations
	\[ 0 \lto V(F) \lto E \lto F  \lto 0, \]
	where $F$ is the trivial representation. Since $G$ is linearly reductive, any such extension $E$ is a direct sum of irreducible representations, which implies that the extension $E$ splits.
\end{proof}

\begin{proof}[Proof of \Cref{thm: existence of invariant point}]
	Let $P$ be a $\Pi$-torsor over~$R$ in $\MT(\cO_{K,S}, \bQ)$. The canonical realisation $P^{\omega}$ is then a $G_S^{\MT}$-equivariant $\Pi^{\omega}$-torsor over $R$. Restricting the action from $G_S^{\MT}$ to the subgroup $\bG_m$, the torsor $P^{\omega}$ defines an element of $\rH^1(\bG_{m,R}, \Pi^{\omega}_R)$. The pro-unipotent group $\Pi^{\omega}$ is finitely generated. This can be checked on the abelianisation and after tensoring with~$K$, i.e., for $\Pi^{\dR,\ab}$, where it follows from $\pi_1^{\dR}(X,0)^{\ab} = \rH^1_{\dR}(X/K)^\vee$ and
	\[ \rH^1_{\dR}(X/K) = K \frac{\rd t}{t} \oplus K \frac{\rd t}{1-t}. \]
	By \Cref{thm: cohomology of reductive group on unipotent group}, 
	the cohomology set $\rH^1(\bG_{m,R}, \Pi^{\omega}_R)$ is trivial, hence $P^{\omega}$ is trivial as a $\bG_m$-equivariant $\Pi^{\omega}$-torsor. This is equivalent to the existence of a $\bG_m$-invariant $R$-point.
	
	To show uniqueness, assume that $p_1$ and $p_2$ are two $\bG_m$-invariant $R$-points of $P^{\omega}$. Then the unique element $\gamma \in \Pi^{\omega}(R)$ such that $p_2 = p_1 \gamma$ is also $\bG_m$-invariant. But by \Cref{thm: fundamental group has trivial invariants} we have $(\Pi^{\omega})^{\bG_m} = 1$, thus $\gamma = 1$ and $p_1 = p_2$.
\end{proof}

\begin{defn}
	\label{def: canonical element}
	Let $P$ be a $\Pi$-torsor in $\MT(\cO_{K,S},\bQ)$ over a $\bQ$-algebra $R$. The unique $\bG_m$-invariant element $p^{\can}$ of $P^{\omega}(R)$, whose existence is guaranteed by \Cref{thm: existence of invariant point}, is called the \emph{canonical element}. 
\end{defn}

\subsection{Canonical paths on the thrice-punctured line}
\label{sec: canonical paths on the thrice-punctured line}

The main source of torsors under the motivic fundamental group are path torsors. As above, let $\pi_1^{\mot}(X,0) \twoheadrightarrow \Pi$ be a quotient of the motivic fundamental group of $X = \bP^1_{\cO_{K,S}} \smallsetminus \{0,1,\infty\}$ in $\MT(\cO_{K,S}, \bQ)$. For any $S$-integral base point $x$ of $X$ (see \Cref{def base point}), the motivic path space $\pi_1^{\mot}(X; 0,x)$ is a right $\pi_1^{\mot}(X,0)$-torsor in $\MT(\cO_{K,S},\bQ)$. We denote by $\pathtorsor{x}{0}$ its pushout along the quotient map $\pi_1^{\mot}(X,0) \twoheadrightarrow \Pi$. When $\Pi$ is the quotient of $\pi_1^{\mot}(X,0)$ by the normal subgroup~$N$, then we have
\[ \pathtorsor{x}{0} = \pi_1^{\mot}(X; 0,x)/N. \]
The $\pi_1^{\mot}(X,0)$-torsor structure of $\pi_1^{\mot}(X; 0,x)$ descends to a $\Pi$-torsor structure on~$\pathtorsor{x}{0}$. By \Cref{thm: existence of invariant point}, the canonical realisation $\pathtorsor[\omega]{y}{x} \coloneqq \omega(\pathtorsor{y}{x})$ has a unique $\bG_m$-invariant point.

\begin{defn}
	\label{def: canonical path}
	The unique $\bG_m$-invariant $K$-point
	\[ \pth[\can]{y}{x} \in \pathtorsor[\omega]{y}{x}(K)^{\bG_m}, \]
	is called the \emph{canonical path}.
\end{defn}

\begin{rem}
	\label{rem:path-torsor-not-trivial}
The canonical path, though being $\bG_m$-invariant, is generally not $G_S^{\MT}$-invariant. Thus, it does not provide a trivialisation of the motivic path torsor.
\end{rem}

\subsection{Canonical cocycles}
\label{sec:canonical-cocycles}

By making use of canonical $\bG_m$-invariant points, we will prove another description of the Selmer functor in terms of $\bG_m$-equivariant cocycles. Recall from \Cref{sec:MT Galois group} that $G_S^{\MT}$ is a semidirect product $G_S^{\MT} = U_S^{\MT} \rtimes \bG_m$, so that $\bG_m$ acts on $U_S^{\MT}$ by conjugation. For a $\bQ$-algebra $R$, a cocycle $c\colon (U_S^{\MT})_R \to \Pi^{\omega}_R$ is $\bG_m$-equivariant if it satisfies 
\[ c(\lambda u \lambda^{-1}) = \lambda(c(u)) \quad \text{for all } \lambda \in \bG_{m,R} \text{ and } u \in (U_S^{\MT})_R \]
(valued in $R$-algebras). Denote by $\rZ^1((U_S^{\MT})_R, \Pi^{\omega}_R)^{\bG_m}$ the pointed set of $\bG_m$-equivariant cocycles, and by 
\[ \underline{\rZ}^1(U_S^{\MT}, \Pi^{\omega})^{\bG_m} \colon \; R \mapsto \rZ^1((U_S^{\MT})_R, \Pi^{\omega}_R)^{\bG_m} \]
the corresponding functor on $\bQ$-algebras. 

If $P$ is a $\Pi$-torsor in $\MT(\cO_{K,S},\bQ)$ over $R$, there is a unique $\bG_m$-invariant point $p^{\can} \in P^{\omega}(R)^{\bG_m}$ by \Cref{thm: existence of invariant point}. It defines a cocycle $c^{\can}_P \colon G_{S,R}^{\MT} \to \Pi^{\omega}_R$ by the rule
\[ g(p^{\can}) = p^{\can} \cdot c^{\can}_P(g) \quad \text{for $\gamma \in G_S^{\MT}$}. \]
We call $c^{\can}_P$ the \emph{canonical cocycle}.

\begin{thm}
	\label{thm: equivariant cocycles description of Selmer scheme}
	The map $P \mapsto c_P^{\can}\vert_{U_S^{\MT}}$ defines an isomorphism of functors on $\bQ$-algebras
	\[ \Sel_{S,\Pi}^{\mot}(X) \cong \underline{\rZ}^1(U_S^{\MT}, \Pi^{\omega})^{\bG_m}. \]
\end{thm}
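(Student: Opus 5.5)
The plan is to combine \Cref{thm:Selmer scheme via H1} with the canonical element of \Cref{thm: existence of invariant point}. By \Cref{thm:Selmer scheme via H1}, $\Sel_{S,\Pi}^{\mot}(X)(R) \cong \rH^1(G^{\MT}_{S,R}, \Pi^{\omega}_R)$. I first show that the canonical cocycles $c_P^{\can}$ are exactly the cocycles $c\colon G^{\MT}_{S,R} \to \Pi^\omega_R$ that vanish on $\bG_{m,R}$. After that I identify such cocycles with $\bG_m$-equivariant cocycles on $U_S^{\MT}$.

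\emph{Step 1: each class contains exactly one cocycle vanishing on $\bG_m$.} Since $p^{\can}$ is $\bG_m$-invariant, $c_P^{\can}(\lambda)=1$ for all $\lambda \in \bG_{m,R}$. Conversely, suppose $c_{p_0}$ is a cocycle attached to a point $p_0\in P^\omega(R)$ and vanishes on $\bG_m$. Then $\lambda(p_0)=p_0$, so $p_0$ is $\bG_m$-invariant, and uniqueness in \Cref{thm: existence of invariant point} gives $p_0=p^{\can}$. The relation $(c.u)(\lambda)=u^{-1}c(\lambda)\lambda(u)$ shows the same thing in cocycle language. If $c$ and $c.u$ both vanish on $\bG_m$, then $u$ is $\bG_m$-invariant, hence $u=1$ by \Cref{thm: fundamental group has trivial invariants}. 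Existence of such a representative in every class is the existence part of \Cref{thm: existence of invariant point}. Under the bijection of \Cref{thm: torsors in tannakian categories via cocycles}, the map $P\mapsto c_P^{\can}$ is therefore a bijection, natural in $R$, from isomorphism classes of torsors onto $\{c\in \rZ^1(G^{\MT}_{S,R},\Pi^\omega_R) : c|_{\bG_m}=1\}$. Naturality holds because base change preserves $\bG_m$-invariance and the canonical element is unique.

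\emph{Step 2: restriction to $U_S^{\MT}$.} Let $c$ be a cocycle with $c|_{\bG_m}=1$. The cocycle identity gives $c(u\lambda)=c(u)\,u(c(\lambda))=c(u)$, so $c$ is determined by its restriction to $U_S^{\MT}$. It also gives $c(\lambda u)=c(\lambda)\lambda(c(u))=\lambda(c(u))$. Writing $\lambda u = (\lambda u\lambda^{-1})\lambda$, we get $c(\lambda u\lambda^{-1})=\lambda(c(u))$, which is $\bG_m$-equivariance. Conversely, given a $\bG_m$-equivariant cocycle $c_U$ on $U_S^{\MT}$, define $c(u\lambda)\coloneqq c_U(u)$ on $G_S^{\MT}=U_S^{\MT}\rtimes\bG_m$. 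This is algebraic, since the product map $U\times\bG_m\to G$ is an isomorphism of schemes. For the cocycle identity, compute
\[ c(u_1\lambda_1u_2\lambda_2)=c_U(u_1\cdot\lambda_1u_2\lambda_1^{-1})=c_U(u_1)\,u_1\bigl(\lambda_1(c_U(u_2))\bigr)=c(u_1\lambda_1)\,(u_1\lambda_1)(c(u_2\lambda_2)), \]
where the middle equality uses the cocycle property on $U$ together with equivariance. The two constructions are mutually inverse and natural in $R$.

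The argument is formal once \Cref{thm: existence of invariant point} and \Cref{thm: fundamental group has trivial invariants} are available. The step needing most care is the semidirect-product computation in Step~2. I need to be consistent that $G_S^{\MT}$ acts on $\Pi^\omega$ and that $\bG_m$ acts on $U_S^{\MT}$ by conjugation inside $G_S^{\MT}$. Everything must also be checked on points valued in arbitrary $R$-algebras, so that the identifications hold as functors and not only on $R$-points.
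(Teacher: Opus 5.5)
Your proposal is correct and follows essentially the same route as the paper. You identify the Selmer functor with cocycles on $G_S^{\MT}$ that are trivial on $\bG_m$, getting uniqueness from $(\Pi^{\omega})^{\bG_m}=1$ and existence from the canonical element, and then pass to $\bG_m$-equivariant cocycles on $U_S^{\MT}$ by restriction, with inverse $c(u\lambda)\coloneqq c_U(u)$ checked by the same semidirect-product computation (your remark that this extension is algebraic because $U_S^{\MT}\times\bG_m\to G_S^{\MT}$ is an isomorphism of schemes is a detail the paper leaves implicit).
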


\begin{proof}
	Since the canonical element $p^{\can} \in P^{\omega}(R)$ is $\bG_m$-invariant, the canonical cocycle $c^{\can}_P$ is trivial on the subgroup $\bG_m \leq G_S^{\MT}$. In other words, $c^{\can}_P$ is contained in the kernel of the restriction map $\underline{\rZ}^1(G_S^{\MT}, \Pi^{\omega}) \to \underline{\rZ}^1(\bG_m, \Pi^{\omega})$. For any cocycle $c$ in this kernel, the restriction to $U_S^{\MT}$ is $\bG_m$-equivariant. Indeed, by using the cocycle property twice, for $\lambda \in \bG_m$ and $u \in U_S^{\MT}$ we have
	\[ c(\lambda u \lambda^{-1}) = c(\lambda) \cdot \lambda(c(u) \cdot u(c(\lambda^{-1}))) = 1 \cdot \lambda(c(u) \cdot u(1)) = \lambda(c(u)). \]
	We therefore have a diagram of functors of pointed sets on $\bQ$-algebras as follows:
	\[
	\begin{tikzcd}
		& \ker\Bigl( \underline{\rZ}^1(G_S^{\MT}, \Pi^{\omega}) \to \underline{\rZ}^1(\bG_m, \Pi^{\omega})  \Bigr) \dar \rar["\res"] & \underline{\rZ}^1(U_S^{\MT}, \Pi^{\omega})^{\bG_m} \\
		\Sel_{S,\Pi}^{\mot}(X) \arrow[ur, dashed, "c^{\can}"] \arrow[r, "\sim"] & \underline{\rH}^1(G_S^{\MT}, \Pi^{\omega}) & 
	\end{tikzcd}
	\]
	The commutativity of the triangle is clear from the construction in \Cref{sec:Sel-via-group-cohomology} of the bottom isomorphism. We claim that also the other maps in the diagram are isomorphisms. For the injectivity of the vertical map, assume that $c_1$ and $c_2$ are cocycles $G_{S,R}^{\MT} \to \Pi^{\omega}_R$ which are both trivial on $\bG_m$ and which are cohomologous. Then there exists $p \in \Pi^{\omega}(R)$ such that $c_2(g) = p^{-1} c_1(g) g(p)$ for all $g \in G_{S,R}^{\MT}$. This holds in particular for $g$ in the subgroup $\bG_m$ where both cocycles are trivial, which implies that $p$ is $\bG_m$-invariant. But we have $(\Pi^{\omega})^{\bG_m} = 1$ by \Cref{thm: fundamental group has trivial invariants}, hence $p = 1$ and $c_1 = c_2$. This shows the injectivity of the vertical map. The map is also split surjective via the canonical cocycle map, hence all maps in the triangle are isomorphisms.
	
	For the injectivity of the restriction map, suppose that $c\colon G_{S,R}^{\MT} \to \Pi^{\omega}_R$ is a cocycle which is trivial on $\bG_m$. Any element of $G_{S,R}^{\MT}$ (valued in an $R$-algebra~$R'$) can be written uniquely in the form $u \lambda$ with $u \in U_S^{\MT}(R')$ and $\lambda \in \bG_m(R')$. We have
	\[ c(u \lambda) = c(u) \cdot u(c(\lambda)) = c(u) \cdot g(1) = c(g), \]
	hence $c$ is determined by its restriction to $(U_S^{\MT})_R$.
	
	For the surjectivity of the restriction map, let $d\colon U_{S,R}^{\MT} \to \Pi^{\omega}_R$ be any $\bG_m$-equivariant cocycle. Extend this to a map $c\colon G_{S,R}^{\MT} \to \Pi^{\omega}_R$ by the rule
	\[ c(u \lambda) \coloneqq d(u) \]
	for $u \in (U_S^{\MT})_R$ and $\lambda \in \bG_{m,R}$. Then $c$ is a cocycle since for $g_i = u_i \lambda_i$ ($i=1,2$) we have
	\begin{align*}
		c(g_1 g_2) &= c(u_1 \lambda_1 u_2 \lambda_1^{-1} \cdot \lambda_1 \lambda_2) = d(u_1 \lambda_1 u_2 \lambda_1^{-1}) & \text{(by definition of $c$)} \\
		&= d(u_1) \cdot u_1(d(\lambda_1 u_2 \lambda_1^{-1})) & \text{(cocycle property)} \\
		&= d(u_1) \cdot u_1(\lambda_1(d(u_2))) & \text{($\bG_m$-equivariance)}\\
		&= c(u_1 \lambda_1) \cdot (u_1 \lambda_1)(c(u_2 \lambda_2)) & \text{(by definition of $c$)}\\
		&= c(g_1)\cdot g_1(c(g_2)).
	\end{align*}
	We have $c\vert_{(U_S^{\MT})_R} = d$ by construction, so the restriction map is surjective and hence an isomorphism.
\end{proof}

\section{The motivic Selmer scheme via Lie algebra cocycles}

The theory of pro-unipotent groups is equivalent to the theory of pro-nilpotent Lie algebras, and the latter are sometimes more convenient to work with. We show how cocycles of algebraic groups can be translated into cocycles of Lie algebras. This leads to an alternative description of the Selmer functor $\Sel^{\mot}_{S,\Pi}(X)$, which will be used to give a new proof of the representability by a (pro-)affine space in \Cref{sec:representability}.

We work over a field $F$ of characteristic~$0$.

\begin{defn}
	Let $\fu$ be a Lie algebra over~$F$. A \emph{derivation} of $\fu$ is a linear endomorphism $D$ which satisfies
	\[ D[Y,Z] = [DY, Z] + [Y, DZ] \quad \text{for } Y,Z \in \fu. \]
	The vector space of derivations of $\fu$ is denoted $\Der(\fu)$. It is itself a Lie algebra with the commutator bracket $[D_1, D_2] = D_1D_2 - D_2 D_1$. 
	An \emph{action} of a Lie algebra $\fg$ on $\fu$ is given by a Lie algebra homomorphism 
	\[ 	\phi\colon \fg \to \Der(\fu). \]
\end{defn}

The space of derivations $\Der(\fu)$ can be identified with the Lie algebra of the group $\Aut(\fu)$ of Lie algebra automorphisms of~$\fu$. 

\begin{example}
	\label{ex: Lie action from group action}
	If an algebraic group $G$ (with Lie algebra $\fg$) acts on another algebraic group $U$ (with Lie algebra $\fu$), then differentiating the homomorphism
	\[ G \to \Aut(U) \to \Aut(\fu) \]
	results in a Lie homomorphism $\fg \to \Lie \Aut(\fu) = \Der(\fu)$, i.e.\ an action by $\fg$ on $\fu$.
\end{example}

\begin{defn}
	\label{def: Lie algebra cocycles}
	Let $\fg$ be a Lie algebra acting on another Lie algebra $\fu$ via $\fg \to \Der(\fu)$, $X \mapsto \phi_X$. A \emph{1-cocycle} is a linear map $C\colon \fg \to \fu$ satisfying
	\[ C[X_1,X_2] = [CX_1, CX_2] + \phi_{X_1}(CX_2) - \phi_{X_2}(CX_1) \quad \text{for } X_1,X_2 \in \fg. \]
	The vector space of 1-cocycles is denoted by
	\[ \rZ^1(\fg, \fu). \]
\end{defn}

\begin{rem}
	\label{rem: cocycle to abelian Lie algebra}
	If the Lie algebra $\fu$ is abelian, then the $\fg$-action is simply a representation on the vector space $\fu$, and 1-cocycles are linear maps $C\colon \fg \to \fu$ satisfying $C[X_1,X_2] = \phi_{X_1}(CX_1) - \phi_{X_2}(CX_1)$ for $X_1,X_2 \in \fg$. This agrees with the definition of 1-cocycles in the Chevalley--Eilenberg complex which is used to define Lie algebra cohomology with coefficients in a representation \cite{chevalley-eilenberg}.
\end{rem}

1-cocycles of Lie algebras have an interpretation in terms of splittings of semidirect product extensions of Lie algebras:

\begin{defn}[{\cite[Ch.~I, §1.8]{bourbaki:lie_algebras}}]
	\label{def: semidirect product of Lie algebras}
	Let $\fg$ be a Lie algebra acting on $\fu$ via $X \mapsto \phi_X$. The \emph{semidirect product} $\fu \rtimes \fg$ is the Lie algebra whose underlying vector space is the direct sum $\fu \oplus \fg$ and whose Lie bracket is given by
	\[ [(Y_1,X_1), (Y_2,X_2)] = ([Y_1,Y_2] + \phi_{X_1}(Y_2) - \phi_{X_2}(Y_1), [X_1,X_2]) \]
	for $X_1, X_2 \in \fg$, $Y_1,Y_2 \in \fu$. The semidirect product sits in an extension
	\[ 0 \to \fu \to \fu \rtimes \fg \to \fg \to 0, \]
	which is canonically split via the inclusion $\fg \to \fu \rtimes \fg$.
\end{defn}

It is easy to verify that a linear map $C\colon \fg \to \fu$ is a 1-cocycle if and only if the map $X \mapsto (CX, X)$ defines a splitting of the Lie algebra homomorphism $\fu \rtimes \fg \to \fg$. Hence we have the following:

\begin{lemma}
	\label{lem: Lie algebra cocycles}
	The space of 1-cocycles $\rZ^1(\fg, \fu)$ is in bijection with splittings of the semidirect product extension $\fu \rtimes \fg$. \qed
\end{lemma}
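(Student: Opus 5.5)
We have to show that $C \mapsto s_C$, with $s_C(X) \coloneqq (CX, X)$, is a bijection from $\rZ^1(\fg,\fu)$ onto the set of Lie algebra splittings of the projection $\pi\colon \fu \rtimes \fg \to \fg$. By a splitting we mean a Lie algebra homomorphism $s\colon \fg \to \fu \rtimes \fg$ with $\pi \circ s = \id_{\fg}$.

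First we check that every linear section of $\pi$ has this shape. Since $\pi$ is the projection onto the second summand of $\fu \oplus \fg$, a linear map $s$ with $\pi \circ s = \id$ has second component equal to the identity. Its first component is then some linear map $C\colon \fg \to \fu$, so $s = s_C$, and $C$ is recovered from $s$ as $\pr_{\fu} \circ s$. Thus $C \mapsto s_C$ is a bijection between linear maps $\fg \to \fu$ and linear sections of $\pi$. It remains to show that $s_C$ is a Lie homomorphism exactly when $C$ is a 1-cocycle.

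For this we compare both sides of $s_C[X_1,X_2] = [s_C X_1, s_C X_2]$ using the bracket of \Cref{def: semidirect product of Lie algebras}. The right-hand side is
\[ \bigl([CX_1,CX_2] + \phi_{X_1}(CX_2) - \phi_{X_2}(CX_1),\ [X_1,X_2]\bigr), \]
while the left-hand side is $(C[X_1,X_2], [X_1,X_2])$. The $\fg$-components always agree. The $\fu$-components agree precisely when the cocycle identity of \Cref{def: Lie algebra cocycles} holds. Hence $s_C$ is a homomorphism if and only if $C \in \rZ^1(\fg,\fu)$, which gives the claimed bijection.

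There is no real obstacle here. The only points needing care are bookkeeping: the sign convention in the semidirect product bracket must match the one in the cocycle definition, and one should note that a splitting is automatically linear, so the comparison above covers all splittings.
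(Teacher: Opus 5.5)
Your proposal is correct and follows the same route as the paper: the paper also identifies a 1-cocycle $C$ with the section $X \mapsto (CX, X)$ and notes that this section is a Lie homomorphism exactly when the cocycle identity holds. Your version just writes out the bracket comparison and the fact that every linear section of $\fu \rtimes \fg \to \fg$ has this form, which the paper leaves implicit.
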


\begin{rem}
	\label{rem: trivial Lie action}
	If the action of $\fg$ on $\fu$ is trivial, i.e., $\fg \to \Der(\fu)$ is the zero map, then 1-cocycles are simply homomorphisms of Lie algebras, and the semidirect product $\fu \rtimes \fg$ agrees with the direct product.
\end{rem}

\begin{lemma}
	\label{thm: Lie cocycle from group cocycle}
	Let $G$ be an algebraic group acting on another algebraic group $U$. Let $\fg \to \Der(\fu)$, $X \mapsto \phi_X$ be the induced action on Lie algebras (see \Cref{ex: Lie action from group action}).  If $c\colon G \to U$ is a cocycle of algebraic groups, then the derivative of $c$ at the identity is a cocycle of Lie algebras $C\colon \fg \to \fu$. 
\end{lemma}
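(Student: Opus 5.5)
The cleanest route is to use the splitting interpretation of \Cref{lem: Lie algebra cocycles}, together with its group-theoretic analogue. The plan is to translate the cocycle $c$ into a homomorphic section of a semidirect product of algebraic groups. Differentiating that section gives a Lie algebra section, which by \Cref{lem: Lie algebra cocycles} is exactly a Lie algebra cocycle.

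First, form the semidirect product algebraic group $U \rtimes G$ with multiplication
\[ (u_1,g_1)(u_2,g_2) = (u_1\, g_1(u_2),\, g_1 g_2). \]
Check that the cocycle identity $c(g_1g_2) = c(g_1)\, g_1(c(g_2))$ is equivalent to the map $s\colon G \to U \rtimes G$, $g \mapsto (c(g), g)$, being a homomorphism of algebraic groups. It is automatically a section of the projection $U \rtimes G \to G$. Applying the cocycle identity with $g_1 = g_2 = 1$ gives $c(1) = 1$, so $c$ has a well-defined derivative at the identity,
\[ C \coloneqq \rd c_1 \colon \fg \to \fu. \]

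Next, identify the Lie algebra of $U \rtimes G$ with the Lie-algebraic semidirect product $\fu \rtimes \fg$ of \Cref{def: semidirect product of Lie algebras}, for the action $\phi$ of \Cref{ex: Lie action from group action}. The underlying vector space is $\fu \oplus \fg$, since the underlying variety is $U \times G$. The substantive point is the bracket. The bracket restricted to $\fu$ and to $\fg$ is the given one, because $U$ and $G$ are subgroups. For the mixed terms, $[X, Y] = \phi_X(Y)$ for $X \in \fg$ and $Y \in \fu$, since conjugation of $U$ by $g \in G$ inside $U \rtimes G$ is exactly the action $u \mapsto g(u)$. Differentiating this once in $u$ gives the adjoint action of $G$ on $\fu$, and differentiating again in $g$ gives $\phi$. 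Bilinearity then produces the formula in \Cref{def: semidirect product of Lie algebras}.

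Finally, differentiating $s$ gives a Lie algebra homomorphism
\[ \rd s\colon \fg \to \fu \rtimes \fg, \qquad X \mapsto (CX, X), \]
which splits the projection to $\fg$. By \Cref{lem: Lie algebra cocycles}, $C$ is a 1-cocycle.

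The main obstacle I expect is the bracket computation on $\Lie(U \rtimes G)$. One must be careful with sign conventions and check that the mixed term comes out as $\phi_{X_1}(Y_2) - \phi_{X_2}(Y_1)$ rather than with opposite signs. If this becomes unwieldy, a fallback is to verify the cocycle identity directly by working with points over the dual numbers $F[\varepsilon_1, \varepsilon_2]$. Write $g_i = 1 + \varepsilon_i X_i$ and expand the cocycle identity for the group commutator $g_1 g_2 g_1^{-1} g_2^{-1}$, whose image is $1 + \varepsilon_1\varepsilon_2 [X_1, X_2]$. Comparing the $\varepsilon_1\varepsilon_2$ coefficients on both sides yields the required identity term by term.
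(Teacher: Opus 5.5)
Your proposal is correct and follows the paper's proof essentially step for step. The paper likewise passes to the homomorphic section $g \mapsto (c(g),g)$ of $U \rtimes G \to G$, identifies $\Lie(U \rtimes G)$ with $\fu \rtimes \fg$, differentiates, and applies the splitting criterion for Lie cocycles; your extra checks (that $c(1)=1$, and that the mixed bracket equals $\phi_X(Y)$ via conjugation in $U \rtimes G$) fill in details the paper leaves implicit, and the dual-number fallback is not needed.
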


\begin{proof}
	Let $U \rtimes G$ be the semidirect product of $U$ and $G$ with respect to the given action. This is again an algebraic group, and its Lie algebra is isomorphic to $\fu \rtimes \fg$. It is easy to verify that 1-cocycles $G \to U$ are equivalent to homomorphic sections of the projection $U \rtimes G \to G$. Let $\tilde c\colon G \to U \rtimes G$, $g \mapsto (c(g),g)$ be the section corresponding to the cocycle $c$. It induces a homomorphic section $\tilde C\colon \fg \to \fu \rtimes \fg$ on Lie algebras. By \Cref{lem: Lie algebra cocycles}, the composition with the projection $\fu \rtimes \fg \to \fu$ is a Lie algebra cocycle $C\colon \fg \to \fu$. Since the projection $\fu \rtimes \fg \to \fu$, albeit not a Lie algebra homomorphism, still is the derivative of the projection $U \rtimes G \to U$ at the identity, it follows that $C$ is indeed the derivative of $c$ at the identity.
\end{proof}

\Cref{thm: Lie cocycle from group cocycle} yields a map
\begin{equation}
	\label{eq: group cocycle to Lie cocycle}
	\rZ^1(G,U) \to \rZ^1(\fg,\fu)
\end{equation}
from algebraic group cocycles to Lie algebra cocycles given by taking the derivative at the identity. If $G$ and $U$ are both unipotent groups over $\bQ$, then so is the semidirect product $U \rtimes G$. Since homomorphisms of unipotent groups are equivalent to homomorphisms between their Lie algebras, the map~\eqref{eq: group cocycle to Lie cocycle} is a bijection in this case. This generalises to pro-unipotent groups and to cocycle functors in a straightforward way, so we conclude:

\begin{lemma}
	\label{lem: cocycle equivalence}
	Let $G/F$ be a pro-unipotent group with Lie algebra $\fg$ acting on another pro-unipotent group $U/F$ with Lie algebra $\fu$. Define the functor $\underline{\rZ}^1(\fg, \fu)$ on $F$-algebras by
	\[ R \mapsto \rZ^1(\fg_R, \fu_R). \]
	Then the map~\eqref{eq: group cocycle to Lie cocycle} defines an isomorphism
	\[ \underline{\rZ}^1(G,U) \cong \underline{\rZ}^1(\fg,\fu). \]
\end{lemma}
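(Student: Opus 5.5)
The plan is to reduce the statement to the classical equivalence between unipotent groups and nilpotent Lie algebras, via the semidirect product trick from the proof of \Cref{thm: Lie cocycle from group cocycle}. Fix an $F$-algebra $R$. By that proof, cocycles $c\colon G_R \to U_R$ are in bijection with homomorphic sections $s\colon G_R \to U_R \rtimes G_R$ of the projection, via $s(g) = (c(g),g)$. By \Cref{lem: Lie algebra cocycles}, Lie cocycles $C\colon \fg_R \to \fu_R$ are in bijection with Lie algebra sections of $\fu_R \rtimes \fg_R \to \fg_R$. Both bijections are natural in~$R$. Moreover, differentiation carries the first bijection to the second, since the derivative of $\tilde c$ is $\tilde C$. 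It therefore suffices to show that taking Lie algebras gives a bijection
\[ \{\text{homomorphic sections of } U_R \rtimes G_R \to G_R\} \to \{\text{Lie sections of } \fu_R \rtimes \fg_R \to \fg_R\}. \]

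First, in the algebraic (finite-dimensional) case, $U \rtimes G$ is unipotent. Its Lie algebra is $\fu \rtimes \fg$ by the formula in \Cref{def: semidirect product of Lie algebras}. Over a field of characteristic~$0$, the functor $\Lie$ is an equivalence from unipotent groups to finite-dimensional nilpotent Lie algebras. Concretely, $\exp$ is an isomorphism of schemes, and the group law is given by the Baker--Campbell--Hausdorff series, which is a finite sum in the nilpotent setting. This gives a bijection on homomorphisms $G \to U \rtimes G$ versus $\fg \to \fu \rtimes \fg$. Sections correspond to sections, because the composite with the projection corresponds to the composite with its derivative, and $\id_G$ corresponds to $\id_{\fg}$.

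For an arbitrary $R$, I would avoid relying on an equivalence of categories over $R$. Instead I would argue directly using $\exp$ and BCH. Write $U_R = \exp(\fu_R)$ and $G_R = \exp(\fg_R)$, both as schemes over~$R$. A morphism of $R$-schemes $G_R \to U_R$ then corresponds to a polynomial map $\fg_R \to \fu_R$. Given a Lie cocycle $C$, define the map by $c(\exp X) \coloneqq \pr_U \exp(\tilde C X)$, where $\tilde C X = (CX, X)$. Because $\tilde C$ is a Lie homomorphism, it commutes with BCH, so the induced map $\exp \circ \tilde C \circ \log$ is a group homomorphism $G_R \to (U \rtimes G)_R$. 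It is a section, and its derivative is $\tilde C$. Conversely, a homomorphism is determined by its derivative, since $\phi(\exp X) = \exp(d\phi(X))$ by functoriality of $\exp$ for homomorphisms of unipotent groups. Both identities are polynomial identities, which remain valid after base change to~$R$. Hence the two assignments are mutually inverse and natural in~$R$.

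Finally, the pro-unipotent case. I would write $U = \varprojlim U_i$ and $G = \varprojlim G_j$ as limits of $G$-stable algebraic quotients. Cocycles into a limit form the limit of cocycles. An algebraic cocycle $G \to U_i$ factors through some $G_j$, because $\cO(G) = \varinjlim \cO(G_j)$. On the Lie side the analogous statement is that Lie cocycles are taken to be continuous, factoring through some $\fg_j$. The main obstacle is making this limit bookkeeping compatible: the quotients $U_i$ must be chosen stable under the $G$-action, for instance the descending central series quotients, which are characteristic. Also, continuity of Lie cocycles must be matched on both sides. Once this is arranged, the finite-dimensional bijections are compatible with the transition maps and pass to the limit.
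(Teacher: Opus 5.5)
Your proposal is correct and takes essentially the same route as the paper. The paper also identifies group cocycles with homomorphic sections of $U \rtimes G \to G$ and Lie cocycles with Lie sections of $\fu \rtimes \fg \to \fg$, uses the equivalence between unipotent groups and nilpotent Lie algebras, and then simply asserts that the extension to pro-unipotent groups and to the functor on $F$-algebras is straightforward.

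Your extra detail is sound up to two small points:
\begin{enumerate}
\item The identity $\phi\circ\exp = \exp\circ \rd\phi$ for a homomorphism $\phi$ defined only over $R$ is not literally a base change of the statement over $F$. It does still hold over any $F$-algebra, for example by restricting to one-parameter subgroups $t \mapsto \exp(tX)$.
\item When $U$ is not finitely generated, the descending central series quotients need not be algebraic. Use instead the $G$-stable algebraic quotients $U/(U\cap M)$, where $M$ runs over normal subgroups of $U \rtimes G$ with $(U\rtimes G)/M$ algebraic.
\end{enumerate}
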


\begin{rem}
	\label{rem: Lie cocycle and logarithm}
	Cocycles, in contrast with homomorphisms, are in general not compatible with the logarithm map, i.e., if $c\colon G \to U$ is a cocycle and $C\colon \fg \to \fu$ is its derivative at the identity, then in general $\log(c(g)) \neq C(\log(g))$ for $g \in G$. 
\end{rem}

Applying \Cref{lem: cocycle equivalence} to the Selmer functor of the thrice-punctured line, we obtain the following:

\begin{prop}
	\label{thm: Selmer scheme via Lie cocycles}
	There is a canonical isomorphism
	\[\Sel_{S,\Pi}^{\mot}(X) \cong \underline{\rZ}^1(\Lie(U_S^{\MT}), \Lie(\Pi^{\omega}))^{\bG_m}. \]
	In other words, points of the Selmer scheme are equivalent to graded Lie cocycles. \qed
\end{prop}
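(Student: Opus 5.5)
The plan is to chain \Cref{thm: equivariant cocycles description of Selmer scheme} with \Cref{lem: cocycle equivalence}, and then check that $\bG_m$-equivariance on the group side corresponds to gradedness on the Lie side. By \Cref{thm: equivariant cocycles description of Selmer scheme} we already have $\Sel_{S,\Pi}^{\mot}(X) \cong \underline{\rZ}^1(U_S^{\MT}, \Pi^{\omega})^{\bG_m}$. Both $U_S^{\MT}$ and $\Pi^{\omega}$ are pro-unipotent over $\bQ$, and $U_S^{\MT}$ acts on $\Pi^{\omega}$ by restriction of the $G_S^{\MT}$-action. Hence \Cref{lem: cocycle equivalence} gives an isomorphism of functors
\[ \underline{\rZ}^1(U_S^{\MT}, \Pi^{\omega}) \cong \underline{\rZ}^1(\Lie(U_S^{\MT}), \Lie(\Pi^{\omega})), \]
given by differentiating at the identity. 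It remains to show that this bijection restricts to a bijection between the $\bG_m$-equivariant subfunctors.

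For the equivariance step, fix a $\bQ$-algebra $R$ and $\lambda \in \bG_m(R')$ for an $R$-algebra $R'$. Conjugation by $\lambda$ is an automorphism $\alpha_\lambda$ of $U_S^{\MT}$, and $\lambda$ also acts as a group automorphism $\beta_\lambda$ of $\Pi^{\omega}$. Given a cocycle $c$, define $c^\lambda \coloneqq \beta_\lambda^{-1}\circ c \circ \alpha_\lambda$. A one-line computation shows that $c^\lambda$ is again a cocycle: the identity $\lambda u\lambda^{-1}(\lambda x) = \lambda(u x)$ holds in $G_S^{\MT}$. The analogous twist $C^\lambda = \Lie(\beta_\lambda)^{-1}\circ C\circ \Lie(\alpha_\lambda)$ preserves Lie cocycles. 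Since differentiation at the identity is functorial, $\rd(c^\lambda) = (\rd c)^\lambda$. Combined with injectivity of the map in \Cref{lem: cocycle equivalence}, applied over $R'$, this gives: $c$ is $\bG_m$-equivariant if and only if $\rd c$ is. Here equivariance of a Lie cocycle means $C\circ \Lie(\alpha_\lambda) = \Lie(\beta_\lambda)\circ C$ for all points $\lambda$.

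Finally, I would translate $\bG_m$-equivariance of a linear map into gradedness. The $\bG_m$-actions on $\Lie(U_S^{\MT})$ and $\Lie(\Pi^{\omega})$ are (pro-)representations of $\bG_m$, so they decompose as completed direct sums of weight spaces. A continuous linear map commutes with the $\bG_m$-action exactly when it preserves weights, i.e.\ when it is graded. This identifies the right-hand side with the graded Lie cocycles, as the statement asserts.

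The main obstacle is bookkeeping in the pro-setting. One must make sure that \Cref{lem: cocycle equivalence} is applied to pro-unipotent groups, using the continuous (pro-finite-dimensional) Lie algebras, and that the weight decomposition of a pro-representation of $\bG_m$ is a product of weight spaces, so that equivariant maps are exactly the graded continuous ones. I would handle this by passing to finite-dimensional quotients $U_S^{\MT} \twoheadrightarrow U_n$ through which a given cocycle factors modulo the lower central series of $\Pi^{\omega}$, and then taking limits.
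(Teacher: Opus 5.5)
Your proposal is correct and follows the paper's route. The paper obtains the proposition simply by applying \Cref{lem: cocycle equivalence} to the $\bG_m$-equivariant cocycle description of \Cref{thm: equivariant cocycles description of Selmer scheme}. Your argument that differentiation commutes with the twist $c \mapsto \beta_\lambda^{-1}\circ c\circ\alpha_\lambda$, together with the identification of $\bG_m$-equivariant continuous linear maps with graded ones, spells out a compatibility the paper leaves implicit.
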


\section{The Selmer scheme as an affine space}

We use the description of the Selmer functor in terms of Lie cocycles to show that it is representable by an affine space. 

\subsection{Generators for the unipotent mixed Tate Galois group}
\label{sec:generators of unipotent MT Galois group}

We start by discussing free generators of $\Lie(U_S^{\MT})$. The $\bG_m$-action on $U_S^{\MT}$ coming from the semidirect product structure of~$G_S^{\MT}$ yields a product grading on the Lie algebra of $U_S^{\MT}$, supported in negative degrees:
\[ \Lie(U_S^{\MT}) = \prod_{n=1}^\infty \Lie(U_S^{\MT})_{-n}. \]
We say that $\Lie(U_S^{\MT})$ is graded by \emph{half-weight}. The abelianisation $\Lie(U_S^{\MT})^{\ab}$ inherits a product grading and the abelianisation map is graded. If $\Sigma_{n}$ is a chosen lift of a basis of $\Lie(U_S^{\MT})^{\ab}_{-n}$ under the surjection
\[ \Lie(U_S^{\MT})_{-n} \twoheadrightarrow \Lie(U_S^{\MT})^{\ab}_{-n}, \]
then the set $\Sigma = \coprod_{n=1}^\infty \Sigma_{n}$ is a system of free homogeneous generators of $\Lie(U_S^{\MT})$ as a pro-nilpotent Lie algebra. We also refer to $\Sigma$ as a set of free generators for $U_S^{\MT}$. Note that the choice of $\Sigma$ is non-canonical.

The spaces $\Lie(U_S^{\MT})^{\ab}_{-n}$ are finite-dimensional and their dimensions are known. Namely, there are canonical isomorphisms~\cite[Proposition~A.15]{deligne-goncharov}
\[ \Lie(U_S^{\MT})^{\ab}_{-n} \cong \Ext^1_{\MT(\cO_{K,S},\bQ)}(\bQ(0), \bQ(n))^\vee \quad \text{for $n \geq 1$}, \]
the Ext groups are given by rational algebraic $K$-groups \cite[§1.7]{deligne-goncharov}
\[ \Ext^1_{\MT(\cO_{K,S},\bQ)}(\bQ(0), \bQ(n)) = K_{2n-1}(\cO_{K,S})_{\bQ} = \begin{cases}
	\cO_{K,S}^\times \otimes \bQ, & \text{if $n=1$},\\
	K_{2n-1}(K) \otimes \bQ &  \text{if $n\geq 2$},
\end{cases} \]
and their dimensions have been calculated by Borel~\cite{borel} as
\begin{equation}
	\label{eq:K-group-dimensions}
	\dim_\bQ K_{2n-1}(\cO_{K,S})_{\bQ} = \begin{cases}
	\#S + r_1 + r_2 -1, & \text{if $n=1$},\\
	r_2, & \text{if $n \geq 2$ even},\\
	r_1 + r_2, & \text{if $n \geq 3$ odd},
\end{cases}
\end{equation}
where $r_1$ and $r_2$ are the number of real and complex places of $K$, respectively. So by choosing a basis of $\Lie(U_S^{\MT})^{\ab}$ and lifting it to the full Lie algebra, we obtain a system $\Sigma = \coprod_{n=1}^{\infty} \Sigma_{n}$ of free homogeneous generators, and the number~$\#\Sigma_{n}$ of generators in degree~$-n$ is given by \eqref{eq:K-group-dimensions}.
In §\ref{sec:generators-of-lie-algebra} we address the question of how to construct such generators explicitly.

\subsection{Representability by an affine space}
\label{sec:representability}

We obtain a proof of the representability of the Selmer functor by an affine space:

\begin{thm}
	\label{thm: pro-representability via Lie cocycles}
	Let $\Sigma = \coprod_{n=1}^\infty \Sigma_{n}$ a free homogeneous generating system of $\Lie(U_S^{\MT})$. Then there is an isomorphism of functors on $\bQ$-algebras
	\[ \Sel_{S,\Pi}^{\mot}(X) \cong \prod_{n=1}^\infty \Lie(\Pi^{\omega})_{-n}^{\Sigma_{n}}. \]
	In particular, $\Sel_{S,\Pi}^{\mot}(X)$ is a (possibly infinite-dimensional) affine space.
\end{thm}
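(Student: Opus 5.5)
The plan is to start from \Cref{thm: Selmer scheme via Lie cocycles}, which identifies $\Sel_{S,\Pi}^{\mot}(X)$ with the functor $R \mapsto \rZ^1(\Lie(U_S^{\MT})_R, \Lie(\Pi^{\omega})_R)^{\bG_m}$ of $\bG_m$-equivariant Lie algebra cocycles. The task is then to show that such a cocycle is uniquely and freely determined by its values on the generators $\Sigma$. Write $\fu = \Lie(U_S^{\MT})$ and $\fn = \Lie(\Pi^{\omega})$, and let $\fu$ act on $\fn$ by derivations $X \mapsto \phi_X$. By \Cref{lem: Lie algebra cocycles}, a cocycle $C\colon \fu_R \to \fn_R$ is the same as a Lie algebra section $X \mapsto (CX, X)$ of $\fn_R \rtimes \fu_R \to \fu_R$. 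In what follows, "continuous" means compatible with the pro-nilpotent (product) topologies.

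\emph{Step 1: free generation.} Since $\fu$ is free pro-nilpotent on $\Sigma$, a continuous Lie homomorphism out of $\fu_R$ is freely determined by the images of the elements of $\Sigma$. I would apply this to sections $\fu_R \to \fn_R \rtimes \fu_R$. A homomorphism $s$ is a section exactly when $\pr_2 \circ s$ equals the identity, and by freeness this can be tested on generators. So sections correspond bijectively to the maps $\Sigma \to \fn_R$, $\sigma \mapsto C\sigma$, via $s(\sigma) = (C\sigma, \sigma)$.

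For this to work, $\fn_R \rtimes \fu_R$ must be pro-nilpotent, so that the universal property of the free pro-nilpotent Lie algebra applies. I would verify this using the half-weight gradings. Both $\fu$ and $\fn$ are supported in negative degrees: for $\fn$ this is the weight argument in the proof of \Cref{thm: fundamental group has trivial invariants}, where $V_n$ is pure of weight $-2n$. The action of $\fu$ on $\fn$ is graded, since it comes from the $\bG_m$-compatible action of $G_S^{\MT}$. Hence the semidirect product is graded in negative degrees, and its finite-degree truncations are nilpotent. When $\Pi$ is finite-dimensional, one can alternatively argue with the unipotent group $\Pi^{\omega} \rtimes U_S^{\MT}$ directly.

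\emph{Step 2: equivariance.} $\bG_m$-equivariance of $C$ means that $C$ is a graded map. Given Step~1, $C$ is graded if and only if $C\sigma \in \fn_{-n}$ for each $\sigma \in \Sigma_n$. One direction is trivial. For the converse, if the generators map to the correct degrees, then the associated section $s$ is a homomorphism of graded Lie algebras, because it is determined by its graded values on generators and brackets preserve degree. Therefore $C = \pr_1 \circ s$ is graded. This yields a bijection
\[ \rZ^1(\fu_R,\fn_R)^{\bG_m} \cong \prod_{n \geq 1} (\fn_{-n} \otimes R)^{\Sigma_n}, \]
which is natural in $R$. The right-hand side is represented by an affine space over $\bQ$, since each $\fn_{-n}$ is finite-dimensional.

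\emph{Main obstacle.} The delicate point is that $\Sigma$ may be infinite and $\fn$ infinite-dimensional, so the universal property must be applied with care in the pro-setting. A continuous map on the free pro-nilpotent Lie algebra must be defined by values on generators that converge. This is automatic here because $\Sigma_n$ is finite for each $n$, and the values on $\Sigma_n$ lie in $\fn_{-n} \otimes R$. To make this rigorous, I would reduce to the truncations $\fu/\fu_{\le -N}$ and $\fn/\fn_{\le -N}$, where everything is finite-dimensional and nilpotent and only the finitely many generators in $\Sigma_1,\dots,\Sigma_{N-1}$ matter. I would then pass to the inverse limit over $N$.
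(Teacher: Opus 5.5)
Your proposal is correct and follows the paper's proof. You pass to graded Lie cocycles via \Cref{thm: Selmer scheme via Lie cocycles}, identify cocycles with homomorphic sections of $\fn_R \rtimes \fu_R \to \fu_R$ using \Cref{lem: Lie algebra cocycles}, apply the universal property of the free pro-nilpotent Lie algebra on $\Sigma$, and use the grading to land in $\prod_n (\fn_{-n}\otimes R)^{\Sigma_n}$.

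Your extra checks fill in details the paper leaves implicit: that the semidirect product is pro-nilpotent because the action is graded in negative degrees, and that graded values on generators converge automatically because each $\Sigma_n$ is finite. One small point: your Step~1 bijection with arbitrary maps $\Sigma \to \fn_R$ only holds for convergent families, and this is exactly what your Step~2 and truncation argument supply.
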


Here, $\Lie(\Pi^{\omega})_{-n}^{\Sigma_{n}}$ denotes the $\bQ$-vector space of maps $\Sigma_{n} \to \Lie(\Pi^{\omega})_{-n}$, and a $\bQ$-vector space~$V$ is viewed as a functor on $\bQ$-algebras via $R \mapsto V \otimes_{\bQ} R$.

\begin{proof}
	Let $R$ be a $\bQ$-algebra and consider an $R$-valued point of $\Sel_{S,\Pi}^{\mot}(X)$, corresponding to a graded Lie algebra cocycle $C\colon \Lie (U_S^{\MT})_R \to \Lie(\Pi^{\omega})_R$ by \Cref{thm: Selmer scheme via Lie cocycles}. Since cocycles are equivalent to homomorphic sections of the semidirect product extension (\Cref{lem: Lie algebra cocycles}), the universal property of free pro-nilpotent Lie algebras applies to cocycles as well, so that $C$ corresponds uniquely to a map $\Sigma \to \Lie(\Pi^{\omega}) \otimes R$ on the free generators. The generators in degree~$-n$ are mapped into degree~$-n$, so the graded cocycle $C$ is equivalent to a collection of maps $\Sigma_{n} \to \Lie(\Pi^{\omega})_{-n} \otimes R$ for $n \geq 1$.
\end{proof}

\begin{cor}
	\label{cor: representability}
	If the quotient $\pi_1^{\mot}(X,0) \twoheadrightarrow \Pi$ is finite-dimensional, then the Selmer scheme $\Sel_{S,\Pi}^{\mot}(X)$ is an affine space of dimension
	\[ \dim \Sel_{S,\Pi}^{\mot}(X) = \sum_{n=1}^\infty \dim_{\bQ} K_{2n-1}(\cO_{K,S})_{\bQ} \cdot \dim_{\bQ} \Lie(\Pi^{\omega})_{-n},\]
	where the dimension of $K_{2n-1}(\cO_{K,S})_{\bQ}$ is given by~\eqref{eq:K-group-dimensions}.
\end{cor}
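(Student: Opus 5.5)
The corollary is a direct count built on \Cref{thm: pro-representability via Lie cocycles}. That theorem gives an isomorphism
\[ \Sel_{S,\Pi}^{\mot}(X) \cong \prod_{n=1}^\infty \Lie(\Pi^{\omega})_{-n}^{\Sigma_{n}}, \]
where $\Sigma = \coprod_n \Sigma_n$ is any free homogeneous generating system of $\Lie(U_S^{\MT})$. It remains to check two things: the right-hand side is a finite-dimensional affine space, and its dimension is the stated sum.

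First I will show that only finitely many factors are nonzero, each finite-dimensional. If $\Pi$ is finite-dimensional, then $\Lie(\Pi^{\omega})$ is a finite-dimensional $\bQ$-vector space with a grading coming from the $\bG_m$-action. It is the product of its graded pieces, so only finitely many $\Lie(\Pi^{\omega})_{-n}$ are nonzero and each is finite-dimensional. The weight argument in \Cref{thm: fundamental group has trivial invariants} shows these degrees are negative: $V_k$ is pure of weight $-2k$, i.e.\ half-weight $-k$. Since $\Pi$ is nilpotent, say $\Pi = \Pi_m$, we get $\Lie(\Pi^{\omega})_{-n} = 0$ for $n > m$.

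Second, the sets $\Sigma_n$ are finite. As recalled in §\ref{sec:generators of unipotent MT Galois group}, $\#\Sigma_n = \dim \Lie(U_S^{\MT})^{\ab}_{-n}$, and this equals $\dim_\bQ K_{2n-1}(\cO_{K,S})_\bQ$ by the Deligne--Goncharov isomorphism with $\Ext^1(\bQ(0),\bQ(n))^\vee$ together with Borel's computation~\eqref{eq:K-group-dimensions}.

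Putting these together, each factor $\Lie(\Pi^{\omega})_{-n}^{\Sigma_n}$ is a vector space of dimension $\#\Sigma_n \cdot \dim \Lie(\Pi^{\omega})_{-n}$, and all but finitely many factors are zero. Viewed as a functor $R \mapsto V \otimes_\bQ R$, a finite-dimensional $V$ is represented by $\bA^{\dim V}_\bQ$, so summing the dimensions gives the formula. The only point needing care is that the grading on $\Lie(\Pi^{\omega})$ is a genuine finite direct sum decomposition, so that the product in \Cref{thm: pro-representability via Lie cocycles} is a finite product. This holds because a $\bG_m$-action on a finite-dimensional space is diagonalisable.
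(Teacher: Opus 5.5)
Your proposal is correct and follows the paper's own proof. You apply \Cref{thm: pro-representability via Lie cocycles}, note that finite-dimensionality of $\Lie(\Pi^{\omega})$ forces $\Lie(\Pi^{\omega})_{-n}=0$ for $n \gg 0$ (so the product is a finite product of finite-dimensional affine spaces), and conclude with $\#\Sigma_n = \dim_{\bQ} K_{2n-1}(\cO_{K,S})_{\bQ}$. Your extra remarks on the negativity of the half-weights and the diagonalisability of the $\bG_m$-action only fill in details the paper leaves implicit.
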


\begin{proof}
	If $\Pi$ is finite-dimensional, so is $\Lie(\Pi^{\omega})$, hence we have $\Lie(\Pi^{\omega})_{-n} = 0$ for $n \gg 0$. Then $\Sel_{S,\Pi}^{\mot}(X) \cong \prod_{n=1}^\infty \Lie(\Pi^{\omega})_{-n}^{\Sigma_{n}}$ is a finite product of finite-dimensional affine spaces, hence an affine space. The dimension formula follows from $\#\Sigma_{n} = \dim_{\bQ} K_{2n-1}(\cO_{K,S})_{\bQ}$.
\end{proof}

\subsection{Coordinates on the Selmer scheme}
\label{sec:coordinates-on-selmer-scheme}

Let $\pi_1^{\mot}(X,0) \twoheadrightarrow \Pi$ be an arbitrary quotient of the motivic fundamental group. \Cref{thm: pro-representability via Lie cocycles} shows that $\Sel_{S,\Pi}^{\mot}(X)$ is isomorphic to $\bA^N$ for some $N \in \bZ_{\geq 0} \cup \{\infty\}$. We construct a set of functions on the Selmer scheme via matrix coefficients which realise this isomorphism. 

\begin{defn}
	\label{def:selmer-scheme-functions}
	Let $n \in \bN$. Given $\sigma \in \Lie(U_S^{\MT})_{-n}$ and $f\in \Lie(\Pi^{\omega})_{-n}^{\vee}$ define
	\[ \Phi^{\sigma}_{f}\colon \Sel_{S,\Pi}^{\mot}(X) \to \bA^1 \]
	as the function sending a graded cocycle $c\colon \Lie(U_S^{\MT})_R \to \Lie(\Pi)_R$, defined over some $\bQ$-algebra~$R$, to
	\[ \Phi^{\sigma}_{f}(c) \coloneqq f(c(\sigma)) \in R. \]
	We refer to $\Phi^{\sigma}_{f}(c)$ as a \emph{Selmer function}.
\end{defn}

\begin{thm}
	\label{coordinates-on-selmer-scheme}
	Let $\Sigma = \coprod_{n=1}^\infty \Sigma_{n}$ be a free homogeneous generating system of $\Lie(U_S^{\MT})$, and for each $n \geq 1$ let $f_{n,1},\ldots,f_{n,d_n}$ be a basis of $\Lie(\Pi^{\omega})_{-n}^{\vee}$. Then we have an isomorphism
	\[ \Sel_{S,\Pi}^{\mot}(X) \cong \Spec \bQ[(\Phi^{\sigma}_{f_{n,i}})_{n,\sigma,i}] \]
	with $n \in \bN$, $\sigma \in \Sigma_n$, and $1 \leq i \leq d_n$.
\end{thm}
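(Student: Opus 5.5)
The plan is to deduce the statement directly from \Cref{thm: pro-representability via Lie cocycles}, by checking that the Selmer functions $\Phi^{\sigma}_{f_{n,i}}$ are exactly the linear coordinate functions on the affine space $\prod_{n} \Lie(\Pi^{\omega})_{-n}^{\Sigma_n}$ under the isomorphism constructed there.

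First we make the isomorphism of \Cref{thm: pro-representability via Lie cocycles} explicit. By \Cref{thm: Selmer scheme via Lie cocycles}, an $R$-point of $\Sel_{S,\Pi}^{\mot}(X)$ is a graded Lie cocycle $c\colon \Lie(U_S^{\MT})_R \to \Lie(\Pi^{\omega})_R$. The isomorphism sends $c$ to the tuple $(c(\sigma))_{n,\sigma\in\Sigma_n}$ with $c(\sigma) \in \Lie(\Pi^{\omega})_{-n}\otimes R$; this is well defined because $c$ is graded. We then expand each component in the dual basis: $c(\sigma) = \sum_i f_{n,i}(c(\sigma))\, e_{n,i}$, where $(e_{n,i})_i$ is the basis of $\Lie(\Pi^{\omega})_{-n}$ dual to $(f_{n,i})_i$. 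Since $\Lie(\Pi^{\omega})_{-n}$ is finite-dimensional, being a graded piece of a quotient of $\Lie(\pi_1^{\omega})$, whose degree $-n$ part is spanned by brackets of length $n$ in two generators, the $f_{n,i}$ give linear coordinates identifying $\Lie(\Pi^{\omega})_{-n}\otimes R \cong R^{d_n}$, functorially in $R$. Hence the composite
\[ \Sel_{S,\Pi}^{\mot}(X)(R) \cong \prod_{n} \bigl(R^{d_n}\bigr)^{\Sigma_n} \]
is given by $c \mapsto (\Phi^{\sigma}_{f_{n,i}}(c))_{n,\sigma,i}$.

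The right-hand side is the functor represented by the polynomial ring in the variables indexed by triples $(n,\sigma,i)$, namely $\Hom(\bQ[x_{n,\sigma,i}], R) = \prod R$. When $\Sigma$ is infinite this is still a polynomial ring (infinitely many variables, each polynomial involving finitely many), and its $R$-points form the full product, which matches the product in \Cref{thm: pro-representability via Lie cocycles}. By Yoneda, the isomorphism of functors is induced by a ring isomorphism $\bQ[x_{n,\sigma,i}] \cong \cO(\Sel_{S,\Pi}^{\mot}(X))$ sending $x_{n,\sigma,i}$ to the function $\Phi^{\sigma}_{f_{n,i}}$. This gives the claimed isomorphism, with the $\Phi^{\sigma}_{f_{n,i}}$ algebraically independent generators.

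The main point requiring care is the bookkeeping when $\Pi$ or $\Sigma$ is infinite: we must ensure the functor really is a product of copies of $R$, not a completed tensor product. This holds because each graded piece is finite-dimensional and the product is taken over the index set itself. We also need $\Phi^{\sigma}_f$ to be a genuine morphism, i.e.\ natural in $R$, which is immediate from its definition by evaluation and pairing.
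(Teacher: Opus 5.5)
Your proposal is correct and matches the paper's proof, which just says the statement follows immediately from \Cref{thm: pro-representability via Lie cocycles}. Your extra steps — expanding $c(\sigma)$ in the dual basis to see that the $\Phi^{\sigma}_{f_{n,i}}$ are the linear coordinates on $\prod_n \Lie(\Pi^{\omega})_{-n}^{\Sigma_n}$, checking that the functor is a full product, and applying Yoneda — are exactly what "follows immediately" leaves implicit.
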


\begin{proof}
	This follows immediately from \Cref{thm: pro-representability via Lie cocycles}.
\end{proof}

\Cref{coordinates-on-selmer-scheme} provides a recipe for constructing a set of coordinates on $\Sel_{S,\Pi}^{\mot}(X)$ for arbitrary fundamental group quotients. Note that for finite-dimensional~$\Pi$, only finitely many generators of $U_S^{\MT}$ need to be constructed: if $\Lie(\Pi^{\omega})_{-n} = 0$ for $n > N$, then only $\Sigma_1, \ldots, \Sigma_N$ are needed in \Cref{coordinates-on-selmer-scheme}. 
In order to choose the bases $\{f_{n,i}\}$ of $\Lie(\Pi^{\omega})_{-n}$, one possibility is to take a basis $\{b_{n,i}\}$ of $\Lie(\Pi^{\omega})_{-n}$ and define the $f_{n,i} \coloneqq b_{n,i}^\vee$ as the dual basis.
In the next section we describe such bases for several natural choices of~$\Pi$.
In §\ref{sec:coefficient-extraction-bases-for-lie-algebra} we discuss another construction of bases of $\Lie(\Pi^{\omega})_{-n}$ using coefficient extraction functionals.

\section{Dimension formulas}
\label{sec:dimension-formulas}

In this section we compute the dimension of the Selmer scheme for several natural quotients of the fundamental group:
\begin{itemize}
	\item the full fundamental group;
	\item the polylogarithmic quotient;
	\item the metabelian quotient.
\end{itemize}
In each case, we further truncate at some finite nilpotency depth~$n$ in order to obtain a finite-dimensional quotient~$\pi_1^{\mot}(X,0) \twoheadrightarrow \Pi$ and compute the dimension of the associated finite-dimensional Selmer scheme $\Sel_{S,\Pi}^{\mot}(X)$. Using \Cref{cor: representability}, this amounts to computing the dimensions of the weight-graded pieces $\Lie(\Pi^{\omega})_{-n}$ of the Lie algebra $\Lie(\Pi^{\omega})$.

\subsection{Structure of the fundamental group}
\label{sec:structure-of-fundamental-group}

The canonical realisation $\pi_1^{\omega}(X,0)$ of the motivic fundamental group of $X = \bP^1 \smallsetminus \{0,1,\infty\}$ is a free pro-unipotent group on two generators. Equivalently, its Lie algebra $\Lie(\pi_1^{\omega}(X,0))$ is free pro-nilpotent on two generators. The Lie algebra generators $e_0$ and $e_1$ can be viewed as infinitesimal loops around~$0$ and~$1$, respectively. They are canonical and can be constructed as follows.

Let $T_0$ be the tangent space of $X$ at~$0$, and let $T_0^* = T_0 \smallsetminus \{0\} \cong \bG_m$. The motivic fundamental group of $\bG_m$ with base point~$1$ is canonically isomorphic to~$\bQ(1)$, viewed as a vector group of rank one, so its canonical realisation is isomorphic to $\bG_a$. The theory of tangential base points yields a \emph{local monodromy map} in $\MT(\cO_{K,S},\bQ)$ \cite[§5.4]{deligne-goncharov}
\begin{equation}
	\label{eq:local-monodromy-0}
	\mu_0\colon \bQ(1) = \pi_1^{\mot}(T_0^*, 1) \to \pi_1^{\mot}(X,0).
\end{equation}
Taking canonical realisations and passing to the induced map on Lie algebras we get
\[ \rd\mu_0^{\omega}\colon \bQ = \Lie(\pi_1^{\omega}(T_0^*, 1)) \to \Lie(\pi_1^{\omega}(X,0)) \]
and $e_0$ is defined as the image of $1 \in \bQ$ under this map. 

We have a similar local monodromy map at 1
\[ \mu_1\colon \bQ(1) = \pi_1^{\mot}(T_1^*,-1) \to \pi_1^{\mot}(X,1) \]
It takes values in the motivic fundamental group of~$X$ at the tangential base point~$1$, which is shorthand for the tangent vector $-\vec{1}_1$ at~$1$, corresponding to~$-1$ under $T_1 \cong \bA^1$. We get an element $e_1' \in \Lie(\pi_1^{\omega}(X,1))$, defined as the image of~$1 \in \bQ$ under $\rd \mu_1^{\omega}$. Let $\gamma \in \pi_1^{\omega}(X;0,1)(\bQ)$ denote the canonical path from~$0$ to~$1$ on~$X$ (\Cref{def: canonical path}). Conjugation by~$\gamma$ defines an isomorphism
\[ \psi_\gamma \colon \pi_1^{\omega}(X,0) \cong \pi_1^{\omega}(X,1), \]
which induces an isomorphism on Lie algebras
\[ \rd \psi_{\gamma}\colon \Lie(\pi_1^{\omega}(X,0)) \cong \Lie(\pi_1^{\omega}(X,1)). \]
We can then define $e_1 \coloneqq \rd \psi_{\gamma}^{-1}(e_1') \in \Lie(\pi_1^{\omega}(X,0))$.

\begin{prop}
	\label{free-generators}
	The Lie algebra $\Lie(\pi_1^{\omega}(X,0))$ is free pro-nilpotent on the two generators $e_0$ and $e_1$.
\end{prop}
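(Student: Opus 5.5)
Since $\Lie(\pi_1^{\omega}(X,0))$ is pro-nilpotent, it is free on $e_0,e_1$ if and only if the images of $e_0,e_1$ form a basis of the abelianisation and the Lie algebra is free on \emph{some} two generators. So the plan is to establish freeness abstractly and then check the abelianisation.

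First, freeness of rank two. We compare with the Betti realisation for a complex embedding $\sigma$: $\pi_1^{\Betti}(X_\sigma(\bC),0)$ is the Malcev completion of the topological fundamental group of $\bC\smallsetminus\{0,1\}$. That group is free on two loops, so its Malcev Lie algebra is free pro-nilpotent on two generators. The realisations $\omega\otimes\bC$ and $\real_{\Betti,\sigma}\otimes\bC$ of the pro-unipotent group $\pi_1^{\mot}(X,0)$ are isomorphic as fibre functors (comparison isomorphism; both fibre functors on a Tannakian category become isomorphic over an algebraically closed field). Hence $\Lie(\pi_1^\omega(X,0))\otimes\bC$ is free pro-nilpotent on two generators. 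Over the field $\bQ$, a pro-nilpotent Lie algebra whose complexification is free is itself free: lift any basis of its abelianisation, which is two-dimensional, and the induced map from the free Lie algebra is surjective (pro-nilpotent Nakayama) and becomes an isomorphism after $\otimes\bC$, hence is an isomorphism. Alternatively, one can use the description in the proof of \Cref{thm: existence of invariant point} via $\rH^1_{\dR}$.

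Second, check that $e_0,e_1$ map to a basis of the abelianisation $\rH_1^{\omega}(X)\cong\bQ(1)^{\omega}\oplus\bQ(1)^{\omega}$. Use the two maps $z\mapsto z$ and $z\mapsto 1-z$ to $\bG_m$, as in the proof of \Cref{thm: fundamental group has trivial invariants}. The local monodromy $\mu_0$ composed with $z\mapsto z$ is the identity on $\pi_1(T_0^*)\cong\pi_1(\bG_m)$ (the tangential base point $\vec 1_0$ matches $1$). Composed with $z\mapsto 1-z$ it is trivial, because that map is regular and nonvanishing near $0$, so the small loop becomes null-homotopic. Hence $e_0\mapsto(1,0)$. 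Symmetrically, $e_1'\mapsto(0,1)$ in $\rH_1$, up to sign from the orientation $-\vec1_1$. Conjugation by the path $\gamma$ acts trivially on abelianisations, since the homology class is independent of base point. So $e_1\mapsto(0,\pm1)$. These verifications can all be done on Betti realisations, where they become statements about small loops in $\bC\smallsetminus\{0,1\}$, and faithfulness transfers them back.

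The main obstacle is the second step: making precise that the motivic local monodromy maps realise the small loops in Betti and that the composite with $z\mapsto z$ at the tangential base point is the identity. I would handle this via the compatibility of Deligne–Goncharov's tangential base point construction with Betti realisation, citing \cite[§15]{deligne:droite-projective}.
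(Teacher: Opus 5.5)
Your proof is correct, but it takes a different route from the paper. The paper disposes of the statement in one line. Because $\omega\otimes_{\bQ}K$ is the de Rham realisation, it checks freeness on $\Lie(\pi_1^{\dR}(X,0))$. It then cites Deligne's description of the de Rham fundamental group of $\bP^1\smallsetminus\{0,1,\infty\}$ \cite[Proposition~12.10]{deligne:droite-projective}, which already gives freeness on the canonical generators attached to the punctures. So one citation supplies both the abstract freeness and the fact that the specific elements $e_0,e_1$ are free generators.

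You separate these two issues.
\begin{enumerate}
\item Abstract freeness of rank two comes from topology: the Malcev completion of the free group $\pi_1^{\mathrm{top}}(\bC\smallsetminus\{0,1\})$. You transport it to $\omega$ via the Betti comparison, and descend from $\bC$ to $\bQ$ by a Nakayama/Hopfian argument.
\item The identification of the generators becomes a rank computation on the abelianisation, using $z\mapsto z$ and $z\mapsto 1-z$.
\end{enumerate}
Your route is more elementary. It also shows explicitly why the particular path $\gamma$ used to transport $e_1'$ back to the base point $0$ is immaterial: conjugation by any path induces the same (motivic) identification of abelianisations. The paper leaves this point implicit. The cost is that you need the compatibility of the motivic local monodromy maps with small loops in the Betti realisation, which is exactly the point you flag. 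The paper instead relies on the de Rham counterpart, which Deligne's result provides.

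One small remark: the $\rH^1_{\dR}$ computation you mention as an alternative only shows that the abelianisation is two-dimensional. It does not give freeness, so the Betti comparison is what actually carries your first step.
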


\begin{proof}
	This can be checked after tensoring with~$K$, i.e.\ on the de Rham realisation, where it follows from \cite[Proposition~12.10]{deligne:droite-projective}.
\end{proof}

Note that the generators $e_0$ and $e_1$ of $\Lie(\pi_1^{\omega}(X,0))$ are homogeneous of half-weight~$-1$ since they come from $\bQ(1)$ and the canonical path~$\gamma$ is $\bG_m$-invariant. 

\subsection{The full depth~$N$ quotient}
\label{sec:full-depth-n}

Let $\Pi = \pi_1^{\mot}(X,0)$ be the full fundamental group and let $\Pi \twoheadrightarrow \Pi_N$ be its depth~$N$ quotient for some $N \geq 1$, i.e., the maximal quotient of nilpotency depth~$\leq N$. Thus, $\Pi_1 = \Pi/[\Pi,\Pi] = \Pi^{\ab}$ is the abelianisation, $\Pi_2 =\Pi/[\Pi,[\Pi,\Pi]]$ is the maximal quotient which is a central extension of $\Pi^{\ab}$ etc. On the level of Lie algebras, $\Lie(\Pi_N)$ is the quotient of $\Lie(\Pi)$ by all nested commutators of length $> N$. This makes sense motivically inside $\MT(\cO_{K,S},\bQ)$. Since $\Lie(\Pi^\omega_N)$ is generated in half-weight~$-1$, the descending central series~$\fz^{\bullet}$ and the weight filtration $W_{\bullet}$ on $\Lie(\Pi_N)$ are related by
\begin{equation}
	\label{eq:weight-vs-lower-central-series}
	\fz^i \Lie(\Pi_N) = W_{-2i} \Lie(\Pi_N).
\end{equation}

\begin{lemma}
	\label{dim-of-graded-lie}
	The dimension of $\Lie(\Pi_N^{\omega})_{-n}$ is given by
	\[ \dim_{\bQ} \Lie(\Pi_N^{\omega})_{-n} = 
	\begin{cases}
		M(2,n), & \textnormal{if $n \leq N$},\\
		0, & \textnormal{if $n > N$,}
	\end{cases} \]
	where $M(\alpha,n) = \frac1{n} \sum_{d\mid n} \mu(n/d) \alpha^d$ denotes the $n$-th Moreau necklace polynomial.
\end{lemma}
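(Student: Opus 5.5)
By \Cref{free-generators}, $\Lie(\pi_1^{\omega}(X,0))$ is the free pro-nilpotent Lie algebra on $e_0, e_1$, and both generators are homogeneous of half-weight~$-1$. Hence the half-weight grading is the grading by bracket length. Concretely, $\Lie(\pi_1^{\omega}(X,0))_{-n}$ is the degree-$n$ component $L_n$ of the free Lie algebra $L = L(e_0,e_1)$ on two generators of degree one. The plan is to identify $\Lie(\Pi_N^{\omega})_{-n}$ with $L_n$ for $n \le N$ and with $0$ for $n > N$, and then invoke Witt's formula.

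First I deal with the quotient. The canonical fibre functor $\omega$ is exact and compatible with brackets, so $\Lie(\Pi_N^{\omega}) = \Lie(\Pi^{\omega})/\fz^{N+1}\Lie(\Pi^{\omega})$. Since $L$ is generated in degree one, the $(N+1)$-st term of the descending central series is the closed ideal $\prod_{n > N} L_n$; this is the graded form of \eqref{eq:weight-vs-lower-central-series}. This ideal is graded ($\bG_m$-stable), so the quotient is graded and
\[ \Lie(\Pi_N^{\omega})_{-n} \cong \begin{cases} L_n, & n \le N,\\ 0, & n > N.\end{cases} \]

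It remains to show $\dim_{\bQ} L_n = M(2,n)$, which is Witt's dimension formula. I would recall the standard derivation. By Poincaré--Birkhoff--Witt, the universal enveloping algebra of $L$ is the free associative algebra $\bQ\langle e_0,e_1\rangle$, whose Hilbert series is $1/(1-2t)$. Comparing with the PBW Hilbert series gives
\[ \frac{1}{1-2t} = \prod_{n\ge1}(1-t^n)^{-\dim L_n}. \]
Taking logarithms yields $\sum_{d\mid n} d \dim L_d = 2^n$, and Möbius inversion gives $\dim L_n = \frac1n\sum_{d\mid n}\mu(n/d)2^d = M(2,n)$.

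The only step needing care is the first: justifying that the grading on $\Lie(\Pi^{\omega})$ coming from $\bG_m$ agrees with the bracket-length grading, and that truncation commutes with the realisation. Both follow from homogeneity of $e_0, e_1$ and exactness of $\omega$. Witt's formula itself is classical and can simply be cited.
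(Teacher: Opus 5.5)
Your proposal is correct and follows essentially the same route as the paper. Like the paper, you identify the half-weight pieces $\Lie(\Pi_N^{\omega})_{-n}$ with the lower-central-series graded pieces of the free Lie algebra on $e_0,e_1$ via \eqref{eq:weight-vs-lower-central-series} and then apply Witt's formula; the only difference is that you sketch the PBW/M\"obius derivation of Witt's formula, whereas the paper simply cites it.
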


\begin{proof}
	By~\eqref{eq:weight-vs-lower-central-series}, we have
	\[ \Lie(\Pi_N^{\omega})_{-n} = \gr^W_{-2n} \Lie(\Pi_N^\omega) = \gr_{\fz}^n \Lie(\Pi_N^\omega). \]
	When $n \leq N$, the latter is the $n$-th graded piece with respect to the lower central series of a free Lie algebra on two generators, and it is well-known that its dimension is given by a necklace polynomial, see for example \cite[Theorem~6]{reutenauer03}.
\end{proof}

The first few values of $M(2,n)$, $n \geq 1$, are given by
\[ 2, \; 1, \; 2, \; 3, \; 6, \; 9, \; 18, \; 30, \; 56, \; 99,\ldots \]

Denote by $\Sel_{S,N}^{\mot}(X) \coloneqq \Sel_{S,\Pi_N}^{\mot}(X)$ the motivic Selmer scheme for the depth $N$ quotient of the fundamental group.

\begin{prop}
	\label{full-quotient-dimension}
	 The motivic Selmer scheme $\Sel_{S,N}^{\mot}(X)$ for the depth~$N$ fundamental group is an affine space of dimension
	\[ \dim \Sel_{S,N}^{\mot}(X) = 2\#S - 2 + r_2 \sum_{\substack{2\leq n \leq N\\ \textnormal{even}}} M(2,n) + (r_1 + r_2) \sum_{\substack{1\leq n \leq N\\ \textnormal{odd}}} M(2,n),\]
	where $r_1$ and $r_2$ denote the number of real and complex places of~$K$, respectively.
\end{prop}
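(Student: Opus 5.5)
The plan is to combine \Cref{cor: representability} with \Cref{dim-of-graded-lie} and Borel's formula \eqref{eq:K-group-dimensions}. The depth~$N$ quotient $\Pi_N$ is finite-dimensional: its Lie algebra is the free nilpotent Lie algebra of class~$N$ on two generators. So \Cref{cor: representability} applies and shows that $\Sel_{S,N}^{\mot}(X)$ is an affine space of dimension
\[ \sum_{n=1}^\infty \dim_{\bQ} K_{2n-1}(\cO_{K,S})_{\bQ} \cdot \dim_{\bQ} \Lie(\Pi_N^{\omega})_{-n}. \]

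First, I will use \Cref{dim-of-graded-lie} to substitute $\dim_{\bQ}\Lie(\Pi_N^{\omega})_{-n} = M(2,n)$ for $n \leq N$ and $0$ for $n > N$. This truncates the sum to $1 \leq n \leq N$.

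Next, I will split the sum into three parts according to \eqref{eq:K-group-dimensions}:
\begin{itemize}
\item the term $n=1$;
\item the even $n$ with $2 \leq n \leq N$;
\item the odd $n$ with $3 \leq n \leq N$.
\end{itemize}
For even $n \geq 2$ the $K$-group dimension is $r_2$, which gives the middle sum in the statement.

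The only bookkeeping step is the $n=1$ term. Here $M(2,1)=2$ and $\dim K_1(\cO_{K,S})_{\bQ} = \#S + r_1 + r_2 - 1$, so the term equals
\[ 2(\#S + r_1 + r_2 - 1) = 2\#S - 2 + (r_1+r_2)\cdot M(2,1). \]
The summand $(r_1+r_2)\,M(2,1)$ is exactly the $n=1$ term of the odd sum, whose coefficient for odd $n \geq 3$ is $r_1+r_2$. Absorbing it there extends the odd sum to $1 \leq n \leq N$ and leaves the constant $2\#S-2$, which is the stated formula.

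No step poses a real obstacle. The inputs (finite-dimensionality of $\Pi_N$, the graded dimensions, and the rank of the unit group) are all taken from earlier results. The only care needed is the regrouping of the $n=1$ term.
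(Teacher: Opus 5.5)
Your proposal is correct and follows the same route as the paper, which likewise derives the formula directly from \Cref{cor: representability}, \Cref{dim-of-graded-lie} and \eqref{eq:K-group-dimensions}. Your explicit regrouping of the $n=1$ term, $2(\#S + r_1 + r_2 - 1) = 2\#S - 2 + (r_1+r_2)M(2,1)$, is exactly the bookkeeping that the paper leaves implicit.
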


\begin{proof}
	This follows from \Cref{cor: representability} together with the formula~\eqref{eq:K-group-dimensions} for the dimensions of the $K$-groups $K_{2n-1}(\cO_{K,S})_{\bQ}$ and \Cref{dim-of-graded-lie}.
\end{proof}

For example, when $K$ is totally real of degree~$d$ over~$\bQ$, we have
\[ \dim \Sel_{S,N}^{\mot}(X) = 2\#S - 2 + d \sum_{\substack{1\leq n \leq N\\ \textnormal{odd}}} M(2,n). \]
When $K$ is totally imaginary of degree~$d$, we have
\[ \dim \Sel_{S,N}^{\mot}(X) = 2\#S - 2 + \frac{d}{2} \sum_{1\leq n \leq N} M(2,n). \]
The dimensions of $\Sel_{S,N}^{\mot}(X)$ for $S = \emptyset$ and small~$N$ over different base fields are listed in \Cref{table:full-depth-n-dimensions}. The dimensions for nonempty~$S$ can be obtained by adding $2\#S$. 

\begin{table}
		\begin{tabular}{|c||cccccccccc|}
			\hline
			depth~$N$ & 1 & 2 & 3 & 4 & 5 & 6 & 7 & 8 & 9 & 10 \\
			 \hline
			$K = \bQ$ & 0 & 0 & 2 & 2 & 8 & 8 & 26 & 26 & 82 & 82 \\
			$K$ real quadratic & 2 & 2 & 6 & 6 & 18 & 18 & 54 & 54 & 166 & 166 \\
			$K$ imaginary quadratic & 0 & 1 & 3 & 6 & 12 & 21 & 39 & 69 & 125 & 224 \\
			\hline
		\end{tabular}
		\caption{Dimensions of Selmer schemes for the full depth~$N$ fundamental group. The table entries are~$\dim \Sel^{\mot}_{\emptyset,N}(X)$ over different base fields and for $1 \leq N \leq 10$. 
		}
		\label{table:full-depth-n-dimensions}
\end{table}

\subsubsection*{The Lyndon basis}
\label{sec:lyndon-basis}

Recall from §\ref{sec:coordinates-on-selmer-scheme} that the construction of coordinates on the Selmer scheme $\Sel_{S,N}^{\mot}(X)$ depends on choosing a basis $\{f_{n,i}\}$ of $\Lie(\Pi_N^{\omega})_{-n}^{\vee}$ for $1 \leq n \leq N$, and one way to achieve this is by taking the dual basis of a given basis of $\Lie(\Pi_N^{\omega})_{-n}$. Such bases can be constructed from so-called \emph{Hall sets} \cite[§4]{reutenauer-book}. A special case of this produces the \emph{Lyndon basis}, \cite[§5.1]{reutenauer-book}, which we describe briefly. 

A word $w$ on the letters $e_0$ and $e_1$ is called a \emph{Lyndon word} if it is nonempty and lexicographically strictly smaller (with respect to the ordering $e_0  < e_1$) than all its nontrivial cyclic rotations. The Lyndon words of length~$\leq 4$ are
\begin{equation}
	\label{eq:lyndon-words}
	e_0, \quad e_1, \quad e_0 e_1, \quad e_0^2 e_1, \quad  e_0 e_1^2, \quad e_0^3 e_1, \quad e_0^2 e_1^2, \quad e_0 e_1^3.
\end{equation}
Given a Lyndon word~$w$ of length~$n$, there is an associated element $\lambda(w) \in \Lie(\pi_1^\omega(X,0))_{-n}$ obtained by inserting Lie brackets in a recursive manner: for a single letter one sets $\lambda(e_0) \coloneqq e_0$ and $\lambda(e_1) \coloneqq e_1$; for $|w| \geq 2$ one defines $\lambda(w) \coloneqq [\lambda(u),\lambda(v)]$, where $w = uv$ is the \emph{standard factorisation}, i.e.\ the factorisation of~$w$ into Lyndon words $u$ and $v$ with $v$ as long as possible. The element $\lambda(w)$ is called the \emph{standard bracketing} of~$w$. For example:
\[ \lambda(e_0 e_1^2) = [\lambda(e_0 e_1), \lambda(e_1)] = [[e_0,e_1],e_1].\]
The Lyndon words with their standard bracketing form the \emph{Lyndon basis}:

\begin{prop}
	\label{lyndon-basis}
	Let $n \geq 1$. The elements $\lambda(w)$ with $w$ a Lyndon word on $\{e_0 < e_1\}$ of length~$n$ form a basis of $\Lie(\pi_1^{\omega}(X,0))_{-n}$. 
\end{prop}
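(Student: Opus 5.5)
The plan is to reduce the statement to the classical theorem on Lyndon bases of free Lie algebras, as in \cite[Theorem~5.3]{reutenauer-book}. By \Cref{free-generators}, $\Lie(\pi_1^{\omega}(X,0))$ is the free pro-nilpotent Lie algebra on $e_0,e_1$. Both generators are homogeneous of half-weight~$-1$, so the half-weight grading coincides with the grading by word length. More precisely, $\Lie(\pi_1^{\omega}(X,0))_{-n}$ is the degree-$n$ component of the free Lie algebra $\mathcal L(e_0,e_1)$ on two generators. This is because the pro-nilpotent completion is $\prod_{n\geq1}\mathcal L_n$ and, by \eqref{eq:weight-vs-lower-central-series}, the half-weight pieces are exactly these homogeneous components.

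With this identification, each standard bracketing $\lambda(w)$ of a Lyndon word of length~$n$ is a homogeneous Lie polynomial of degree~$n$, hence lies in $\Lie(\pi_1^{\omega}(X,0))_{-n}$. It remains to show that these elements form a basis of the degree-$n$ piece $\mathcal L_n$.

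\emph{Linear independence.} Embed $\mathcal L$ into the free associative algebra $\bQ\langle e_0,e_1\rangle$, which is its universal enveloping algebra. The key triangularity fact is that, for a Lyndon word~$w$, the expansion of $\lambda(w)$ as a polynomial equals $w$ plus a combination of words that are lexicographically strictly larger than~$w$. We prove this by induction on $|w|$ using the standard factorisation $w=uv$ with $u<v$. Here one needs the standard lemma that $uv$ is the smallest word appearing in $\lambda(u)\lambda(v)-\lambda(v)\lambda(u)$. Since the leading words of the $\lambda(w)$ are distinct, the $\lambda(w)$ are linearly independent.

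\emph{Spanning.} Rather than rewriting arbitrary brackets, we count dimensions. The number of Lyndon words of length~$n$ on two letters is $M(2,n)$: every primitive necklace of length~$n$ contains exactly one Lyndon representative, and Möbius inversion of $2^n=\sum_{d\mid n} d\,M(2,d)$ gives the formula. By \Cref{dim-of-graded-lie} (taking $N\ge n$), $\dim\Lie(\pi_1^{\omega}(X,0))_{-n}=M(2,n)$. Hence $M(2,n)$ linearly independent elements form a basis.

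The main obstacle is the triangularity lemma in the independence step, namely controlling the minimal word of $[\lambda(u),\lambda(v)]$. This rests on the combinatorics of Lyndon words: for $u<v$ Lyndon, $uv$ is Lyndon and $uv<vu$, and a suffix argument shows $uv$ is smaller than every other word occurring in the expansion. If writing this out becomes lengthy, I would cite \cite[Theorem~5.3]{reutenauer-book} directly, since the rest of the argument is only the identification of the grading.
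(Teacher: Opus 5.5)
Your argument is correct, but it follows a different and more self-contained route than the paper, whose proof is only the citation \cite[\S5.1]{reutenauer-book}. You split the classical theorem into two halves.

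\emph{Independence.} This comes from the unitriangularity of the $\lambda(w)$ with respect to the lexicographic order. That statement is exactly \Cref{Lyndon lemma}, which the paper proves later for use in \Cref{lyndon-fw-basis}. No suffix argument is needed there. All words occurring in $\lambda(u)$ and $\lambda(v)$ have the fixed lengths $|u|$ and $|v|$, so every comparison reduces to comparing equal-length prefixes. The one remaining comparison is $uv<vu$, which holds directly because $uv$ is Lyndon. You do not need the fact that concatenations of Lyndon words $u<v$ are Lyndon.

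\emph{Spanning.} This comes from a dimension count: you match the necklace count of Lyndon words against the Witt formula of \Cref{dim-of-graded-lie}. That lemma precedes the proposition, so there is no circularity within the paper.

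Your route sidesteps the harder half of the classical theorem, namely rewriting arbitrary brackets in terms of standard bracketings (via Hall-set rewriting or Lazard elimination). It also shows that a single triangularity lemma yields both this proposition and its dual, \Cref{lyndon-fw-basis}.

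The price is that spanning now rests on Witt's formula as a black box. This is harmless, because Witt's formula follows independently from Poincar\'e--Birkhoff--Witt. Comparing the Hilbert series $1/(1-2t)$ of $\bQ\langle e_0,e_1\rangle$ with $\prod_n(1-t^n)^{-\dim\mathcal{L}_n}$ gives the identity $2^n=\sum_{d\mid n}d\,\dim\mathcal{L}_d$. This is the same identity you use to count Lyndon words, so you should make sure the source you rely on for Witt's formula does not itself derive it from the Lyndon basis. The paper's citation is shorter, and the cited result covers arbitrary Hall bases.
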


\begin{proof}
	See for example \cite[§5.1]{reutenauer-book}.
\end{proof}

Thus, from the Lyndon words~\eqref{eq:lyndon-words} one obtains the following bases of $\Lie(\Pi^{\omega})_{-n}$ for $1 \leq n \leq 4$:

\begin{center}
\begin{tabular}{|c|c|c|}
	\hline
	$n$ & basis of $\Lie(\Pi^{\omega})_{-n}$ & dimension $M(2,n)$ \\
	\hline
	$1$ & $e_0, \quad e_1$ & $2$ \\
	$2$ & $[e_0, e_1]$ & $1$  \\
	$3$ & $[e_0, [e_0,e_1]], \quad  [[e_0, e_1],e_1]$ & $2$  \\
	$4$ & $[e_0, [e_0, [e_0,e_1]], \quad [e_0, [[e_0, e_1],e_1]], \quad [[[e_0, e_1],e_1],e_1]$ & $3$ \\
	\hline
\end{tabular}
\end{center}

\subsection{The polylogarithmic quotient}
\label{sec:polylogarithmic}

One fundamental group quotient which is particularly convenient to work with in the Chabauty--Kim method, is the so-called \emph{polylogarithmic quotient}. It can be defined as follows: the inclusion $X = \bP^1 \smallsetminus \{0,1,\infty\} \hookrightarrow \bG_m$ induces a homomorphism of motivic fundamental groups $\pi_1^{\mot}(X,0) \to \bQ(1)$. Let $N_1$ be its kernel. The polylogarithmic quotient $\pi_1^{\mot}(X,0) \twoheadrightarrow \Pi_{\PL}$ is defined as
\[ \Pi_{\PL} \coloneqq \pi_1^{\mot}(X,0)/[N_1,N_1]. \]
On the level of Lie algebras, the kernel of $\Lie(\pi_1^\omega(X,0)) \to \Lie(\bG_a) = \bQ$ is the Lie ideal generated by~$e_1$, so $\Lie(\Pi_{\PL}^\omega)$ can be described as the free pro-nilpotent Lie algebra on generators~$\{e_0,e_1\}$ modulo all nested commutators of $e_1$-degree $> 1$. Consequently, a basis of $\Lie(\Pi_{\PL}^\omega)$ is given by $\{e_0,e_1\}$ in half-weight~$-1$ and $\ad(e_0)^{n-1} e_1$ in half-weight~$-n$ for $n \geq 2$.

For $N \geq 1$, denote by $\Pi_{\PL,N}$ the maximal quotient of $\Pi_{\PL}$ of nilpotency depth~$\leq N$, and denote by $\Sel_{S,\PL,N}^{\mot}(X)$ the associated Selmer scheme.

\begin{lemma}
	\label{polylog-lie-dimensions}
	The dimension of $\Lie(\Pi_{\PL,N}^\omega)_{-n}$ is given by
	\[ \dim_{\bQ} \Lie(\Pi_{\PL,N}^{\omega})_{-n} = 
	\begin{cases}
		2 & \textnormal{if $n = 1$},\\
		1 & \textnormal{if $2 \leq n \leq N$},\\
		0, & \textnormal{if $n > N$.}
	\end{cases} \]
\end{lemma}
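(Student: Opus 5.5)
The plan is to read the dimensions off the explicit description of $\Lie(\Pi_{\PL}^\omega)$ recalled just before the statement, and then truncate at depth~$N$. Write $\fg = \Lie(\pi_1^\omega(X,0))$, the free pro-nilpotent Lie algebra on $e_0,e_1$ (\Cref{free-generators}), graded by half-weight with $e_0,e_1$ in degree~$-1$. Let $I \subseteq \fg$ be the closed ideal generated by $e_1$, which is the Lie algebra of $N_1^\omega$. Then $\Lie(\Pi_{\PL}^\omega) = \fg/\overline{[I,I]}$. The first step is to check that $[I,I]$ is a graded ideal and to verify the claimed basis. Consider the $e_1$-degree grading on $\fg$. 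The ideal $I$ is spanned by the brackets of $e_0,e_1$ of $e_1$-degree $\geq 1$. By Lazard elimination, $I$ is free on the elements $\ad(e_0)^k e_1$ for $k \geq 0$. Hence $[I,I]$ is exactly the span of brackets of $e_1$-degree $\geq 2$, and $I/[I,I]$ has basis $\{\ad(e_0)^{k}e_1\}_{k\geq 0}$. Since $\fg = \bQ e_0 \oplus I$ as graded vector spaces, the quotient $\fg/[I,I]$ has basis $e_0$ together with $\ad(e_0)^{n-1}e_1$ for $n\geq1$. This basis is homogeneous, with $\ad(e_0)^{n-1}e_1$ in half-weight~$-n$.

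The second step passes to the depth~$N$ quotient. Because $\Lie(\Pi_{\PL}^\omega)$ is generated in half-weight~$-1$, the argument behind~\eqref{eq:weight-vs-lower-central-series} applies: the $i$-th term of its lower central series is the part of half-weight $\leq -i$. So $\Lie(\Pi_{\PL,N}^\omega)$ is the quotient by all components of half-weight $<-N$, namely by the span of $\ad(e_0)^{n-1}e_1$ for $n>N$. Counting basis elements in each degree then gives the stated dimensions: $2$ in degree $-1$ (from $e_0,e_1$), $1$ in degree $-n$ for $2\le n\le N$, and $0$ below degree $-N$.

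The main obstacle is the structural claim that $[I,I]$ equals the span of brackets of $e_1$-degree $\geq2$, which is what makes the $\ad(e_0)^{n-1}e_1$ independent modulo $[I,I]$. I would cite Lazard elimination here, e.g.\ \cite{reutenauer-book}. Alternatively, one can exhibit the metabelian-type representation: let $e_0$ act on $\bQ[t]$ by multiplication by $t$ and let $e_1$ act as the rank-one map sending $1$ to $1$. This representation kills $[I,I]$, and it shows directly that the $\ad(e_0)^{n-1}e_1$ are nonzero and linearly independent in the quotient.
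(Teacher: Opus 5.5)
Your main argument is correct and is the paper's argument. The paper proves the lemma by reading the dimensions off the basis $\{e_0,e_1\}\cup\{\ad(e_0)^{n-1}e_1\}_{n\geq2}$ of $\Lie(\Pi_{\PL}^\omega)$ asserted just before it; your Lazard-elimination step and your truncation via the lower central series only supply the justification the paper leaves implicit.

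Your optional alternative, however, does not work as stated. Suppose $e_0$ acts on $\bQ[t]$ by multiplication by $t$ and $e_1$ by a rank-one map $E$ with $E(1)=1$. Then $[e_1,[e_0,e_1]]\in[I,I]$ acts nontrivially: for $E(p)=p(0)$ it sends $1\mapsto -t$, and for any such $E$ its value at $1$ has $t$-coefficient $-1$. So this representation does not factor through $\fg/\overline{[I,I]}$. You need $e_1$ to act with square zero. For example, on $\bQ[t]\oplus\bQ\epsilon$ let $e_1$ send $\epsilon\mapsto 1$ and $\bQ[t]\mapsto 0$, and let $e_0$ kill $\epsilon$ and act by $t$ on $\bQ[t]$. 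Then $\ad(e_0)^{k}e_1$ acts as $\epsilon\mapsto t^k$, these operators pairwise compose to zero, and the image of $I$ is abelian.
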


\begin{proof}
	This is clear from the description of the bases of the weight-graded pieces of $\Lie(\Pi_{\PL}^{\omega})$.
\end{proof}

\begin{prop}
	\label{polylog-dimension}
	The depth~$N$ polylogarithmic Selmer scheme $\Sel^{\mot}_{S,\PL,N}(X)$ is an affine space of dimension
	\[ \dim \Sel^{\mot}_{S,\PL,N}(X) = r_1 \left\lfloor \frac{N+3}{2} \right\rfloor + r_2(N+1) + 2\#S - 2,\]
	where $r_1$ and $r_2$ denote the number of real and complex places of~$K$, respectively.
\end{prop}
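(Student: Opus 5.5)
The plan is to apply \Cref{cor: representability} to the finite-dimensional quotient $\Pi_{\PL,N}$, insert the graded dimensions from \Cref{polylog-lie-dimensions}, and then evaluate the resulting finite sum using Borel's formula~\eqref{eq:K-group-dimensions}. The fact that $\Pi_{\PL,N}$ is finite-dimensional is clear from \Cref{polylog-lie-dimensions}, since its graded pieces vanish for $n > N$. With $d_n \coloneqq \dim_{\bQ} K_{2n-1}(\cO_{K,S})_{\bQ}$, \Cref{cor: representability} gives
\[ \dim \Sel^{\mot}_{S,\PL,N}(X) = 2 d_1 + \sum_{n=2}^{N} d_n. \]

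Next I separate the $n=1$ term from the rest. Since $d_1 = \#S + r_1 + r_2 - 1$, the first term contributes $2\#S - 2 + 2r_1 + 2r_2$. For $2 \le n \le N$, every such $n$ contributes $r_2$, and each odd $n$ contributes an additional $r_1$. The total contribution from $n \ge 2$ is therefore
\[ r_2 (N-1) + r_1 \cdot \#\{\,3 \le n \le N : n \text{ odd}\,\}. \]

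Combining the two parts, the $r_2$-coefficient becomes $2 + (N-1) = N+1$. The $r_1$-coefficient becomes $2 + \#\{\text{odd } n \in [3,N]\}$. The number of odd integers in $[3,N]$ is $\lfloor (N-1)/2 \rfloor$, so the $r_1$-coefficient is $2 + \lfloor (N-1)/2 \rfloor = \lfloor (N+3)/2 \rfloor$. This matches the claimed formula.

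The only point that needs care is the edge case $N = 1$. There the range $2 \le n \le N$ is empty and the odd-count is $0$, so the $r_1$-coefficient is $\lfloor 4/2 \rfloor = 2$ and the $r_2$-coefficient is $2$, as required. As a cross-check, for $K = \bQ$ the formula gives $\lfloor (N+3)/2 \rfloor - 1$, which is consistent with counting $d_1 = 0$ together with one contribution from each odd $n \ge 3$. I do not expect any real obstacle; the floor-function bookkeeping is the only place where an error could creep in.
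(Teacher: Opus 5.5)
Your proposal is correct and is the paper's proof: apply \Cref{cor: representability} with the graded dimensions from \Cref{polylog-lie-dimensions} and Borel's formula, split off the $n=1$ term, and count the odd $n\in[3,N]$ as $\lfloor (N-1)/2\rfloor$. One small slip in your optional sanity check: for $K=\bQ$, $S=\emptyset$ the formula gives $\lfloor (N+3)/2\rfloor - 2 = \lfloor (N-1)/2\rfloor$, not $\lfloor (N+3)/2\rfloor - 1$, and $\lfloor (N-1)/2\rfloor$ is exactly what your own count ($d_1=0$ plus one for each odd $n\geq 3$) produces.
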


\begin{proof}
	Combining \Cref{cor: representability} with the formula~\eqref{eq:K-group-dimensions} for the dimensions of the $K$-groups $K_{2n-1}(\cO_{K,S})_{\bQ}$ and \Cref{polylog-lie-dimensions}, we obtain
	\begin{align*}
		\dim \Sel^{\mot}_{S,\PL,N}(X) &= (r_1 + r_2 - 1 + \#S)\cdot 2 + r_2 \sum_{\substack{2\leq n \leq N\\ \textnormal{even}}} 1 + (r_1+r_2)\sum_{\substack{3\leq n \leq N\\ \textnormal{odd}}} 1 \\
		&= 2(r_1 + r_2 - 1 + \#S) + r_1 \#\{3\leq n \leq N \textnormal{ odd}\} + r_2 \#\{2 \leq n \leq N\} \\
		&= 2(r_1 + r_2 - 1 + \#S) + r_1 \left\lfloor \frac{N-1}{2} \right\rfloor + r_2(N-1) \\
		&= r_1 \left\lfloor \frac{N+3}{2} \right\rfloor + r_2(N+1) + 2\#S - 2. \qedhere
	\end{align*}
\end{proof}

For example, when $K$ is totally real of degree~$d$ over~$\bQ$, we have
\[ \dim \Sel^{\mot}_{S,\PL,N}(X) = d \left\lfloor \frac{N-1}{2} \right\rfloor + 2\#S. \]
When $K$ is totally imaginary of degree~$d$, we have
\[ \dim \Sel^{\mot}_{S,\PL,N}(X) = \frac{d}{2}(N+1) + 2\#S - 2. \]
The dimensions of $\Sel^{\mot}_{S,\PL,N}(X)$ for $S = \emptyset$ and small~$N$ over different base fields are listed in \Cref{table:polylog-dimensions}. The dimensions for nonempty~$S$ can be obtained by adding $2\#S$. Note that the dimensions for the polylogarithmic quotient are a lot smaller than for the full fundamental group.

\begin{table}
	\begin{tabular}{|c||cccccccccc|}
		\hline
		depth~$N$ & 1 & 2 & 3 & 4 & 5 & 6 & 7 & 8 & 9 & 10 \\
		\hline
		$K = \bQ$ & 0 & 0 & 1 & 1 & 2 & 2 & 3 & 3 & 4 & 4 \\
		$K$ real quadratic & 0 & 0 & 2 & 2 & 4 & 4 & 6 & 6 & 8 & 8 \\
		$K$ imaginary quadratic & 0 & 1 & 2 & 3 & 4 & 5 & 6 & 7 & 8 & 9 \\
		\hline
	\end{tabular}
	\caption{Dimensions of polylogarithmic Selmer schemes. The table entries are $\dim \Sel^{\mot}_{\emptyset,\PL,N}(X)$ over different base fields and for $1 \leq N \leq 10$.}
	\label{table:polylog-dimensions}
\end{table}

\subsection{The metabelian quotient}
\label{sec:metabelian}

We now consider the metabelian quotient $\Pi_{\met}$ of the fundamental group. It is defined as the quotient by the second derived subgroup (commutators of commutators), so if $\Pi = \pi_1^{\mot}(X,0)$ denotes the full fundamental group then
\[ \Pi_{\met} \coloneqq \Pi/[[\Pi,\Pi], [\Pi,\Pi]]. \]
Note that this definition makes sense inside $\MT(\cO_{K,S},\bQ)$. The metabelian quotient is still infinite-dimensional, so we consider its depth $N$ quotients $\Pi_{\met,N}$ for $N \geq 1$.

\begin{lemma}
	\label{metabelian-basis}
	A basis of $\Lie(\Pi_{\met,N}^{\omega})$ is given by $e_0$, $e_1$, and $\ad(e_0)^{k-1} \ad(e_1)^{l-1} [e_0,e_1]$ for $k,l\geq 1$ with $k+l\leq N$.
\end{lemma}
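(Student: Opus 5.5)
The plan is to reduce everything to a statement about the free Lie algebra $\fn = \Lie(\pi_1^{\omega}(X,0))$, which is free on $e_0,e_1$ by \Cref{free-generators}. Write $\fn' = [\fn,\fn]$. Then $\Lie(\Pi_{\met}^{\omega}) = \fn/[\fn',\fn']$, and truncating at depth $N$ means further quotienting by $\fz^{N+1}$. That is, we kill all nested commutators of length $>N$, equivalently everything of half-weight $< -N$ by \eqref{eq:weight-vs-lower-central-series}. So it suffices to show two things. First, the free metabelian Lie algebra $\fm = \fn/[\fn',\fn']$ has basis $e_0$, $e_1$ and $\ad(e_0)^{k-1}\ad(e_1)^{l-1}[e_0,e_1]$ for $k,l\geq 1$. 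Second, the depth truncation keeps exactly those elements with half-weight $-(k+l) \geq -N$. Note that the element indexed by $(k,l)$ is homogeneous of length $k+l$, and its $e_0$-degree is $k$ and its $e_1$-degree is $l$.

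\emph{Spanning.} The quotient $\fm/\fm'$ is spanned by $e_0,e_1$. The derived algebra $\fm'$ is abelian and is generated as an $\fm$-module, via $\ad$, by $[e_0,e_1]$. Since $\fm'$ is abelian, $\fm'$ acts trivially on itself, so the adjoint action factors through $\fm/\fm'$, which is abelian on $e_0,e_1$. The Jacobi identity then gives
\[ [\ad(e_0),\ad(e_1)] = \ad([e_0,e_1]), \]
and this vanishes on $\fm'$. Hence $\ad(e_0)$ and $\ad(e_1)$ commute on $\fm'$, which therefore is a cyclic module over the polynomial ring $\bQ[\ad e_0, \ad e_1]$ generated by $[e_0,e_1]$. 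Consequently $\fm'$ is spanned by the monomials $\ad(e_0)^{a}\ad(e_1)^{b}[e_0,e_1]$ with $a,b\geq 0$, which are exactly the elements in the statement with $k=a+1$, $l=b+1$.

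\emph{Linear independence.} This is the main obstacle: we must exhibit that no further relations hold. I would construct an explicit metabelian Lie algebra realising the basis. Let $A = \bQ[x_0,x_1]$ and let $\fm_0 = \bQ e_0 \oplus \bQ e_1 \oplus (x_0x_1) A$, viewed as a subspace of the semidirect product $A \rtimes \bQ^2$. Here $e_i$ acts on $A$ by multiplication by $x_i$, and we set $[e_0,e_1] = x_0x_1$ (wait: better, define $[e_0,e_1] := 1 \in A$ shifted). Concretely, take the Lie algebra $A \oplus \bQ e_0 \oplus \bQ e_1$ with:
\begin{itemize}
\item $A$ abelian;
\item $[e_i,f] = x_i f$ for $f\in A$;
\item $[e_0,e_1] = 1$.
\end{itemize}
Jacobi only needs checking on the triple $(e_0,e_1,f)$, where it reduces to the commutativity $x_0x_1f = x_1x_0f$. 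This algebra is metabelian. By the universal property of $\fm$ there is a map $\fm \to A\oplus\bQ^2$, and it sends $\ad(e_0)^{k-1}\ad(e_1)^{l-1}[e_0,e_1] \mapsto x_0^{k-1}x_1^{l-1}$, which are linearly independent. Together with the images of $e_0,e_1$, this gives independence. Alternatively, one could compare with the classical dimension count for free metabelian Lie algebras, but the explicit model is self-contained.

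\emph{Truncation.} The ideal $\fz^{N+1}\fm$ is spanned by the basis elements of length $\geq N+1$, since all basis elements are homogeneous brackets and $\fm$ is graded by length. Hence the images of the basis elements with $k+l\leq N$, together with $e_0$ and $e_1$ (which have length $1 \leq N$), form a basis of $\Lie(\Pi^{\omega}_{\met,N})$. Completeness (pro-nilpotence) causes no issue, because after truncation everything is finite-dimensional.
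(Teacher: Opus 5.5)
Your proof is correct. The spanning step uses the same idea as the paper: the Jacobi identity makes $\ad(e_0)$ and $\ad(e_1)$ commute on the derived algebra, because $[[e_0,e_1],Y]=0$ there. You phrase it a little more cleanly through the cyclic module structure over the polynomial ring in $\ad e_0,\ad e_1$. The truncation step is also the same as the paper's, though you spell it out more.

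The genuine difference is linear independence:
\begin{itemize}
\item The paper cites Reutenauer's theorem on a Hall set $H_0\cup H_1\cup\cdots$ compatible with the derived series. It takes the Hall elements coming from $H_0\cup H_1$ as a basis of the free metabelian Lie algebra. It then rewrites these left-normed brackets as $\pm\ad(e_0)^{k-1}\ad(e_1)^{l-1}[e_0,e_1]$.
\item You instead construct the explicit metabelian Lie algebra $\bQ[x_0,x_1]\oplus\bQ e_0\oplus\bQ e_1$ and map to it by the universal property.
\end{itemize}
Your model checks out. The Jacobiator of an alternating bracket is alternating, so the only nontrivial triple is $(e_0,e_1,f)$. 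There it reduces to $x_0x_1f=x_1x_0f$. The images $x_0^{k-1}x_1^{l-1}$ are visibly independent.

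Your route is self-contained and proves slightly more: the (uncompleted) derived algebra is free of rank one over the polynomial ring in $\ad e_0,\ad e_1$. The paper's route is shorter on the page and yields a basis adapted to the whole derived series, at the cost of relying on the literature.

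Two small points of presentation:
\begin{itemize}
\item Delete the false start about $(x_0x_1)A$ inside $A\rtimes\bQ^2$, and keep only the concrete model you then write down.
\item Your model is graded but not nilpotent. So the universal property should be applied to the uncompleted (graded) free Lie algebra on $e_0,e_1$, not to $\Lie(\pi_1^{\omega}(X,0))$ itself. This is harmless, because the two have the same depth-$N$ truncation, as you essentially note at the end.
\end{itemize}
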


\begin{proof}
	For any nested commutator $Y \in \Lie(\Pi^\omega)$ of length~$\geq 2$ the Jacobi identity implies
	\[ [e_1, [e_0, Y]] - [e_0, [e_1,Y]] = [[e_0, e_1], Y] = 0 \text{  in $\Lie(\Pi_{\met}^{\omega})$}, \]
	so the $e_0$ and $e_1$ can be permuted. Thus, in the metabelian Lie algebra, any nested commutator of length~$\geq 2$ is equal up to sign to $\ad(e_0)^{k-1} \ad(e_1)^{l-1}[e_0,e_1]$ for some $k,l \geq 1$, showing that the listed elements generate $\Lie(\Pi^{\omega}_{\met,N})$ as a $\bQ$-vector space. To see that they form a basis one can use \cite[Theorem~5.7]{reutenauer-book}. The theorem describes a Hall set~$H = H_0 \cup H_1 \cup \ldots$ whose associated Hall basis is compatible with the derived series in the sense that the elements coming from $\bigcup_{k \geq p} H_k$ form a basis of the $p$-th derived Lie ideal in the (uncompleted) free Lie algebra for $p \geq 1$. In particular, the elements coming from $H_0$ and $H_1$ form a basis of the metabelian quotient. These are $e_0$, $e_1$, as well as all elements of the form
	\begin{equation*}
		\label{eq:hall-basis-element}
		[[[[\cdots [[[[e_0,e_1],e_1],\ldots,e_1],e_0],e_0],\ldots],e_0]
	\end{equation*}
	with $e_0$ appearing $k \geq 1$ times and $e_1$ appearing $l \geq 1$ times. 
	Using $[Y,e_i] = \ad(-e_i)Y$, this can be rewritten as
	\[ \ad(-e_0)^{k-1} \ad(-e_1)^{l-1}[e_0,e_1], \]
	which agrees with the claimed basis element $\ad(e_0)^{k-1} \ad(e_1)^{l-1}[e_0,e_1]$ up to sign. Finally, we obtain a basis of the depth~$N$ quotient $\Lie(\Pi_{\met,N}^{\omega})$ by ignoring all basis elements with $k+l > N$.
\end{proof}

\begin{lemma}
	\label{metabelian-lie-dimensions}
	The dimension of $\Lie(\Pi_{\met,N}^\omega)_{-n}$ is given by
	\[ \dim_{\bQ} \Lie(\Pi_{\met,N}^{\omega})_{-n} = 
	\begin{cases}
		2 & \textnormal{if $n = 1$},\\
		n-1 & \textnormal{if $2 \leq n \leq N$},\\
		0, & \textnormal{if $n > N$.}
	\end{cases} \]
\end{lemma}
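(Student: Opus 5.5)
The plan is to read off the dimensions directly from the basis in \Cref{metabelian-basis}, sorting basis elements by half-weight. The key input is that the grading is additive under brackets. The generators $e_0$ and $e_1$ are homogeneous of half-weight~$-1$, as noted after \Cref{free-generators}. The Lie bracket on $\Lie(\Pi^{\omega}_{\met,N})$ is compatible with the $\bG_m$-action, since the bracket comes from a morphism in $\MT(\cO_{K,S},\bQ)$, so it is graded. Hence a nested commutator involving $k$ copies of $e_0$ and $l$ copies of $e_1$ lies in half-weight $-(k+l)$. In particular, $\ad(e_0)^{k-1}\ad(e_1)^{l-1}[e_0,e_1]$ is homogeneous of half-weight $-(k+l)$.

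Since the basis of \Cref{metabelian-basis} consists of homogeneous elements, its elements of half-weight~$-n$ form a basis of $\Lie(\Pi^{\omega}_{\met,N})_{-n}$. The count then splits into three cases.

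\begin{enumerate}
\item For $n=1$, the basis elements of half-weight~$-1$ are exactly $e_0$ and $e_1$, because every bracketed element has $k+l\geq 2$. This gives dimension~$2$.
\item For $2\le n\le N$, the basis elements of half-weight~$-n$ correspond to pairs $(k,l)$ with $k,l\ge1$ and $k+l=n$. These are the pairs with $k=1,\dots,n-1$, so there are $n-1$ of them.
\item For $n>N$, no basis element exists, because the basis only includes pairs with $k+l\le N$. So the graded piece is zero.
\end{enumerate}

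I expect no real obstacle; the argument is pure bookkeeping. The one point to be careful about is that homogeneity of the basis is what makes the graded pieces spanned by subsets of it. This holds because the product grading on $\Lie(\Pi^{\omega}_{\met,N})$ is induced by the $\bG_m$-action, under which every element of the basis is an eigenvector.
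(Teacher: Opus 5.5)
Your proposal is correct and matches the paper's proof, which likewise sorts the basis from \Cref{metabelian-basis} by half-weight and counts the elements $\ad(e_0)^{k-1}\ad(e_1)^{n-k-1}[e_0,e_1]$ for $k=1,\ldots,n-1$. You also check that the basis elements are homogeneous, so each graded piece is spanned by a subset of the basis; the paper uses this without stating it.
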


\begin{proof}
	By \Cref{metabelian-basis}, the basis elements of $\Lie(\Pi_{\met,N}^\omega)$ in half-weight $n = -1$ are $e_0$ and $e_1$; the basis elements in half-weight $-n$, for $2 \leq n \leq N$, are $\ad(e_0)^{k-1} \ad(e_1)^{n-k-1} [e_0,e_1]$ with $k=1,\ldots,n-1$.
\end{proof}

Denote the Selmer scheme for the depth $N$ metabelian quotient by $\Sel_{S,\met,N}^{\mot}(X)$.

\begin{prop}
	\label{metabelian-dimension}
	The metabelian Selmer scheme $\Sel^{\mot}_{S,\met,N}(X)$ is an affine space of dimension
	\[ \dim \Sel_{S,\met,N}^{\mot}(X) = r_1 \left\lfloor \frac{N-1}{2} \right\rfloor \left\lfloor \frac{N+1}{2} \right\rfloor + r_2 \frac{N(N-1)}{2} + 2(r_1 + r_2 -1 +\#S). \]
\end{prop}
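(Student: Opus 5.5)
Since $\Pi_{\met,N}$ is finite-dimensional, the plan is to apply \Cref{cor: representability} exactly as in the proofs of \Cref{full-quotient-dimension} and \Cref{polylog-dimension}. We insert the dimensions of $\Lie(\Pi_{\met,N}^{\omega})_{-n}$ from \Cref{metabelian-lie-dimensions} and the $K$-group dimensions \eqref{eq:K-group-dimensions}. The resulting sum is
\[ \dim \Sel_{S,\met,N}^{\mot}(X) = 2(\#S + r_1 + r_2 - 1) + r_2 \sum_{\substack{2 \leq n \leq N\\ \textnormal{even}}} (n-1) + (r_1+r_2) \sum_{\substack{3\leq n \leq N\\ \textnormal{odd}}} (n-1). \]

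Next I would collect the coefficients. The coefficient of $r_2$ is $\sum_{n=2}^{N}(n-1) = N(N-1)/2$, and the coefficient of $r_1$ is $\sum_{3 \leq n \leq N \text{ odd}}(n-1)$. For the latter, substitute $n = 2m+1$ with $1 \leq m \leq M \coloneqq \lfloor (N-1)/2 \rfloor$. The sum then becomes $\sum_{m=1}^{M} 2m = M(M+1)$. It remains to check that $M+1 = \lfloor (N+1)/2 \rfloor$, which holds trivially. This gives the term $r_1 \lfloor (N-1)/2\rfloor \lfloor (N+1)/2\rfloor$, while the half-weight $-1$ contribution supplies the constant term $2(r_1+r_2-1+\#S)$.

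There is no real obstacle. The only care needed is the bookkeeping of the parity split: the $n=1$ term must be kept separate, since there the dimension is $2$ rather than $n-1$, and the floor identity should be checked for both parities of $N$.
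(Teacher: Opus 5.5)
Your proposal is correct and follows the paper's own proof: apply \Cref{cor: representability} with \Cref{metabelian-lie-dimensions} and \eqref{eq:K-group-dimensions}, regroup into the $r_2$-coefficient $\sum_{2\le n\le N}(n-1) = N(N-1)/2$ and the $r_1$-coefficient $\sum_{1\le i\le \lfloor (N-1)/2\rfloor} 2i$. Then use $\lfloor (N-1)/2\rfloor + 1 = \lfloor (N+1)/2\rfloor$, keeping the half-weight $-1$ contribution $2(r_1+r_2-1+\#S)$ separate.
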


\begin{proof}
	Combining \Cref{cor: representability} with the formula~\eqref{eq:K-group-dimensions} for the dimensions of the $K$-groups $K_{2n-1}(\cO_{K,S})_{\bQ}$ and \Cref{metabelian-lie-dimensions} yields
	\begin{align*}
		\dim \Sel_{S,\met,N}^{\mot}(X) &= 2(r_1 + r_2 - 1 + \#S) + r_2 \sum_{\substack{2\leq n \leq N\\ \textnormal{even}}} (n-1) + (r_1+r_2)\sum_{\substack{3\leq n \leq N\\ \textnormal{odd}}} (n-1) \\
		&= 2(r_1 + r_2 - 1 + \#S) + r_1\sum_{\substack{3\leq n \leq N\\ \textnormal{odd}}} (n-1) + r_2 \sum_{2 \leq n \leq N} (n-1) \\
		&= 2(r_1 + r_2 - 1 + \#S) + r_1 \sum_{1 \leq i \leq \lfloor (N-1)/2 \rfloor} 2i + r_2 \frac{N(N-1)}{2} \\
		&= 2(r_1 + r_2 - 1 + \#S) + r_1 \left\lfloor \frac{N-1}{2} \right\rfloor \left(\left\lfloor \frac{N-1}{2} \right\rfloor + 1 \right) + r_2 \frac{N(N-1)}{2} \\
		&= r_1 \left\lfloor \frac{N-1}{2} \right\rfloor \left\lfloor \frac{N+1}{2} \right\rfloor + r_2 \frac{N(N-1)}{2} + 2(r_1 + r_2 -1 +\#S). \qedhere
	\end{align*}
\end{proof}

For example, when $K$ is totally real of degree~$d$ over~$\bQ$, we have
\[ \dim \Sel_{S,\met,N}^{\mot}(X) = d \left\lfloor \frac{N-1}{2} \right\rfloor \left\lfloor \frac{N+1}{2} \right\rfloor + 2(d -1 +\#S). \]
When $K$ is totally imaginary of degree~$d$, we have
\[ \dim \Sel_{S,\met,N}^{\mot}(X) =  \frac{d}{2}\cdot \frac{N(N-1)}{2} + d - 2 + 2\#S. \]
The dimensions of $\Sel_{S,\met,N}^{\mot}(X)$ for $S = \emptyset$ and small~$N$ over different base fields are listed in \Cref{table:polylog-dimensions}. The dimensions for nonempty~$S$ can be obtained by adding $2\#S$.

\begin{table}
	\begin{tabular}{|c||cccccccccc|}
		\hline
		depth~$N$ & 1 & 2 & 3 & 4 & 5 & 6 & 7 & 8 & 9 & 10 \\
		\hline
		$K = \bQ$ & 0 & 0 & 2 & 2 & 6 & 6 & 12 & 12 & 20 & 20 \\
		$K$ real quadratic & 2 & 2 & 6 & 6 & 14 & 14 & 26 & 26 & 42 & 42 \\
		$K$ imaginary quadratic & 0 & 1 & 3 & 6 & 10 & 15 & 21 & 28 & 36 & 45 \\
		\hline
	\end{tabular}
	\caption{Dimensions of metabelian Selmer schemes. The table entries are $\dim \Sel^{\mot}_{\emptyset,\met,N}(X)$ over different base fields and for $1 \leq N \leq 10$.}
	\label{table:metabelian-dimensions}
\end{table}

\section{The motivic Selmer scheme via Hopf algebra cocycles}
\label{sec:hopf-algebra-cocycles}

When doing explicit calculations in Chabauty--Kim theory, it can be useful to work with yet another type of object besides algebraic groups and Lie algebras: complete Hopf algebras. In this section we work over any field $F$ of characteristic zero.

\subsection{Profinite-dimensional vector spaces}
\label{sec:profinite-dimensional-vector-spaces}

A \emph{profinite-dimensional vector space} $V$ over~$F$ is a topological vector space which is isomorphic to an inverse limit of finite-dimensional discrete vector spaces, the limit being endowed with the inverse limit topology. We denote by $\ProFinVect_F$ the category of profinite-dimensional vector spaces with continuous linear maps. This category is isomorphic to the pro-category of finite-dimensional vector spaces. A family $\{v_i\}_{i \in I}$ of elements in a profinite-dimensional vector space is said to form a \emph{basis} if they are linearly independent and span a dense subspace. 

If $V$ is an arbitrary discrete vector space, not necessarily finite-dimensional, its linear dual $V^{\vee}$ naturally carries the structure of a profinite-dimensional vector space via the isomorphism
\[ V^{\vee} \cong \varprojlim_i V_i^{\vee}, \]
with $V_i$ running through all finite-dimensional subspaces of~$V$. This induces a duality
\begin{equation}
	\label{eq:vector-space-duality}
	\Vect_F^{\op} \cong \ProFinVect_F
\end{equation}
between vector spaces (and linear maps) and profinite-dimensional vector spaces (and continuous linear maps) given by $V \leftrightarrow V^{\vee}$. Here, $V^{\vee}$ denotes the ordinary dual space of~$V$ when $V$ is a discrete vector space, and it denotes the \emph{continuous} dual space when $V$ is a profinite-dimensional vector space. Since $\Vect_F$ is an abelian category and the axioms for an abelian category are self-dual, $\ProFinVect_F$ is an abelian category as well. It is equipped with the \emph{completed tensor product}, i.e.\ for $V, W \in \ProFinVect_F$ we define
\[ V \otimes W \coloneqq V \hat\otimes W \coloneqq \varprojlim_{i,j}\, (V_i \otimes W_j) \quad \text{where } V = \varprojlim_i V_i, \; W = \varprojlim_j W_j \]
with $V_i$ and $W_j$ running through the finite-dimensional (topological) quotients of $V$ and $W$, respectively. The duality~\eqref{eq:vector-space-duality} is monoidal with respect to the ordinary tensor product on $\Vect_F$ and the completed tensor product on $\ProFinVect_F$.

\subsection{The complete Hopf algebra associated to an algebraic group}
\label{sec:cocommutative-hopf-algebra}

\begin{defn}
	\label{def:complete-hopf-algebra}
	A \emph{complete Hopf algebra} over~$F$ is a Hopf algebra object in $\ProFinVect_F$.
\end{defn}

If $G$ is an affine group scheme over~$F$, the ring of functions $\cO(G)$ is a commutative Hopf algebra with coproduct $\cO(G) \to \cO(G) \otimes \cO(G)$ induced by the group operation $G \times G \to G$. By the duality~\eqref{eq:vector-space-duality}, its dual $\cO(G)^\vee$ is a cocommutative complete Hopf algebra in the sense of \Cref{def:complete-hopf-algebra}.

\begin{defn}
	\label{def:cocommutative-hopf-algebra}
	The \emph{complete Hopf algebra} (or \emph{convolution algebra}) associated to an affine group scheme~$G$ is
	\[ \cH(G) \coloneqq \cO(G)^\vee. \]
\end{defn}

The Hopf algebra structure on $\cH(G)$ is induced from the one on $\cO(G)$ by dualising. Concretely:
\begin{enumerate}
	\item the multiplication $\nabla\colon \cO(G) \otimes \cO(G) \to \cO(G)$ dualises to the \emph{coproduct}
	\[ \Delta \colon \cH(G) \to \cH(G) \otimes \cH(G); \]
	\item the unit map $\eta\colon F \to \cO(G)$ dualises to the \emph{counit} $\varepsilon\colon \cH(G) \to F$;
	\item the coproduct $\Delta\colon \cO(G) \to \cO(G) \otimes \cO(G)$ coming from the group operation $G \times G \to G$ dualises to the \emph{product}
	\[ \nabla\colon \cH(G) \otimes \cH(G) \to \cH(G); \]
	\item the counit $\varepsilon\colon \cO(G) \to F$ (evaluating functions on~$G$ at the identity) dualises to the \emph{unit} $\eta\colon F \to \cH(G)$;
	\item the antipode $\iota^{\sharp}\colon \cO(G) \to \cO(G)$ coming from the inversion map $\iota\colon G \to G$ dualises to the antipode $S\colon \cH(G) \to \cH(G)$.
\end{enumerate}

Note that since $\cH(G)$ has the structure of a profinite-dimensional vector space, $\cH(G) \otimes \cH(G)$ denotes the \emph{completed} tensor product, which can be larger than the tensor product of the underlying discrete vector spaces. 

The association $G \mapsto \cH(G)$ is covariantly functorial: a homomorphism of affine group schemes $\phi\colon G \to G'$ is given by a Hopf algebra homomorphism $\phi^\sharp\colon \cO(G') \to \cO(G)$, and its dual $(\phi^\sharp)^{\vee}$ is a homomorphism of complete Hopf algebras
\[ \cH(\phi)\colon \cH(G') \to \cH(G). \]
We get an equivalence of categories
\begin{align*}
	\left\{\text{affine group schemes} \right\} &\simeq \left\{\text{cocommutative complete Hopf algebras} \right\},\\
	G &\mapsto \cH(G).
\end{align*}
The quasi-inverse is given by $H \mapsto \Spec(H^{\vee})$. 

\begin{example}
	\label{ex:finite group}
	If $G$ is a finite group, viewed as a finite group scheme over~$F$, its ring of functions is $\cO(G) = F^G$ and its associated complete Hopf algebra $\cH(G)$ is the group algebra $F[G]$. The isomorphism $(F^G)^{\vee} \cong F[G]$ is given by $\mu \mapsto \sum_{g \in G} \mu(\delta_g) g$, where $\delta_g \in F^G$ is the Kronecker delta function $g' \mapsto \delta_{gg'}$.
\end{example}

\begin{rem}
	\label{tannaka-interpretation-of-hopf-algebra}
	The complete Hopf algebra $\cH(G)$ of an affine group scheme has a Tannakian interpretation as the algebra of endomorphisms of the forgetful fibre functor on the category of finite-dimensional representations of~$G$.
\end{rem}

\subsection{Group-like and Lie-like elements}
\label{sec:grouplike-and-lielike}

An affine group scheme $G$ over~$F$ can be recovered from its associated complete Hopf algebra via $G = \Spec \cH(G)^\vee$. The group of $F$-valued points $G(F)$ as well as the Lie algebra $\Lie(G)$ can be found inside $\cH(G)$ as the set of group-like respectively Lie-like elements.

\begin{defn}
	\label{def:group-like}
	Let $H$ be a cocommutative complete Hopf algebra. An element~$X$ of~$H$ is called \emph{group-like} if $\varepsilon(X) = 1$ and $\Delta X= X \otimes X$. The set of group-like elements in~$H$ is denoted $H^{\gp}$.
\end{defn}

The set of group-like elements in $H$ is closed under multiplication and forms a group. For an $F$-valued point $g \in G(F)$, evaluation at $g$ defines an algebra homomorphism $g^\sharp\colon \cO(G) \to F$, hence an element of $\cH(G)$.

\begin{lemma}
	\label{lem:grouplike}
	This defines an isomorphism of groups $G(F) \cong \cH(G)^{\gp}$.
\end{lemma}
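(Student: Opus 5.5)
The plan is to unwind the duality between $\cO(G)$ and $\cH(G) = \cO(G)^\vee$ and show that both conditions defining group-likeness say precisely that a functional is an $F$-algebra homomorphism $\cO(G) \to F$. The statement then reduces to the Yoneda identification $G(F) = \Hom_{F\text{-alg}}(\cO(G),F)$.

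First, I identify the conditions. An element $X \in \cH(G)$ is a linear functional $X\colon \cO(G) \to F$. Its coproduct $\Delta X$ is, by construction, the functional on $\cO(G)\otimes\cO(G)$ given by $a\otimes b \mapsto X(ab)$. This uses the monoidality of the duality \eqref{eq:vector-space-duality}, which gives $(\cO(G)\otimes\cO(G))^\vee = \cH(G)\hat\otimes\cH(G)$. The element $X\otimes X$ is the functional $a\otimes b\mapsto X(a)X(b)$. Hence $\Delta X = X\otimes X$ holds iff $X$ is multiplicative. Likewise, $\varepsilon(X) = X(1)$, since the counit of $\cH(G)$ is dual to the unit $\eta\colon F\to\cO(G)$. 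So $\varepsilon(X)=1$ iff $X$ is unital. Together, the group-like elements are exactly the $F$-algebra homomorphisms $\cO(G)\to F$, which are exactly the maps $g^\sharp$ for $g \in G(F)$. This gives a bijection $G(F)\to\cH(G)^{\gp}$, $g\mapsto g^\sharp$. Continuity is automatic, because the topology on $\cO(G)^\vee$ is the weak-$*$ one and every functional on the discrete space $\cO(G)$ counts.

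Second, I check that this bijection is a homomorphism. The product in $\cH(G)$ is dual to the coproduct $m^\sharp\colon \cO(G)\to\cO(G)\otimes\cO(G)$ coming from the multiplication $m\colon G\times G\to G$. So $(g^\sharp\cdot h^\sharp)(a) = (g^\sharp\otimes h^\sharp)(m^\sharp a)$. The right-hand side is the evaluation of $m^\sharp a$ at $(g,h)$, which equals $a(gh) = (gh)^\sharp(a)$. The unit of $\cH(G)$ is the counit of $\cO(G)$, i.e.\ evaluation at the identity, so it is $e^\sharp$. This also shows that $\cH(G)^{\gp}$ is closed under products and is a group, with inverse $g^\sharp\circ\iota^\sharp = S(g^\sharp)$.

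The only step that needs care is the identification of $X\otimes X$ and $\Delta X$ inside the completed tensor product. One has to confirm that the completed tensor product $\cH(G)\hat\otimes\cH(G)$ really is the full dual of $\cO(G)\otimes\cO(G)$. The elementary tensor $X\otimes X$ must correspond to the functional $a\otimes b\mapsto X(a)X(b)$ under this identification. I would verify this by writing $\cO(G)$ as the union of its finite-dimensional subspaces $V_i$ and computing at finite level, where $(V_i\otimes V_j)^\vee = V_i^\vee\otimes V_j^\vee$, and then passing to the limit. Everything else is formal.
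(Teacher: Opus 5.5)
Your proposal is correct and follows the paper's proof. Like the paper, you identify the conditions $\varepsilon(X)=1$ and $\Delta X = X\otimes X$ with $X\colon \cO(G)\to F$ being unital and multiplicative, so that $\cH(G)^{\gp} = \Hom_{F\text{-alg}}(\cO(G),F) = G(F)$, and then check that products correspond. You also carry out the product check via $m^\sharp$, which the paper leaves as ``one checks'', and the identification $\cH(G)\hat\otimes\cH(G) = (\cO(G)\otimes\cO(G))^\vee$; both are done correctly.
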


\begin{proof}
	The group-like elements of $\cH(G)$ are linear maps $X\colon \cO(G) \to F$ satisfying $X(1) = 1$ and $X(ab) = X(a)X(b)$, in other words $F$-algebra homomorphisms:
	\[ \cH(G)^{\gp} = \Hom_{F\mhyphen\mathrm{alg}}(\cO(G), F) = G(F). \]
	One checks that under this isomorphism, multiplication in $\cH(G)^{\gp}$ is the group operation on $G(F)$.
\end{proof}

\begin{defn}
	\label{def:lie-like}
	Let $H$ be a cocommutative complete Hopf algebra. An element~$X$ of~$H$ is called \emph{Lie-like} (or \emph{primitive}) if $\varepsilon(X) = 0$ and $\Delta X = X \otimes 1 + 1 \otimes X$. The set of Lie-like elements in~$H$ is denoted $H^{\Lie}$.
\end{defn}

The set of Lie-like elements in~$H$ is closed under the bracket $[a,b] \coloneqq ab - ba$ and thus forms a Lie algebra. For a pro-algebraic group~$G$, elements of the Lie algebra $\Lie(G)$ can be identified with derivations $D\colon \cO(G) \to F$, i.e., linear maps satisfying
\[ D(ab) = \varepsilon(a) D(b) + \varepsilon(b) D(a) \]
for $a,b \in \cO(G)$ \cite[Proposition~10.28]{milne:algebraic}. Hence, we have an inclusion $\Lie(G) \hookrightarrow \cH(G)^{\Lie}$.

\begin{lemma}
	\label{lem:lielike}
	This defines an isomorphism of Lie algebras $\Lie(G) \cong \cH(G)^{\Lie}$.
\end{lemma}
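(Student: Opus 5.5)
The plan is to identify both sides as subsets of the continuous dual $\cH(G)=\cO(G)^\vee$ and check that the conditions cut out exactly the same elements. Then we verify that the bracket $[a,b]=ab-ba$ in $\cH(G)$ corresponds to the Lie bracket of $\Lie(G)$.

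\emph{Step 1: Lie-like elements are derivations.} Let $X\in\cH(G)$, viewed as a linear functional $\cO(G)\to F$. By construction, the coproduct on $\cH(G)$ is dual to the multiplication on $\cO(G)$, the counit is evaluation at $1\in\cO(G)$, and the unit $1\in\cH(G)$ is the counit $\varepsilon$ of $\cO(G)$, i.e.\ evaluation at the identity. Hence the condition $\Delta X=X\otimes 1+1\otimes X$ reads
\[ X(ab)=X(a)\varepsilon(b)+\varepsilon(a)X(b) \quad\text{for all } a,b\in\cO(G). \]
The condition $\varepsilon_{\cH}(X)=0$ reads $X(1)=0$, and this already follows from the displayed identity with $a=b=1$. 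So $\cH(G)^{\Lie}$ is precisely the space of $\varepsilon$-derivations $\cO(G)\to F$. By \cite[Proposition~10.28]{milne:algebraic}, this space is $\Lie(G)$, so the inclusion $\Lie(G)\hookrightarrow\cH(G)^{\Lie}$ is a bijection of vector spaces. For pro-algebraic $G$, write $\cO(G)=\varinjlim\cO(G_i)$. Both sides are then compatible with the limit, since $\Lie(G)=\varprojlim\Lie(G_i)$ and continuous functionals factor through the finite stages. Alternatively, one can apply Milne's description directly to $\cO(G)$, using $\Lie(G)=\ker(G(F[\epsilon])\to G(F))$.

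\emph{Step 2: brackets agree.} The product in $\cH(G)$ is convolution, $(XY)(a)=(X\otimes Y)(\Delta a)$. So we must show that for derivations $D_1,D_2$ at the identity, the functional $a\mapsto (D_1\otimes D_2-D_2\otimes D_1)(\Delta a)$ is the Lie bracket in $\Lie(G)$. I would use the standard identification of $\Lie(G)$ with left-invariant derivations $\cO(G)\to\cO(G)$, given by $D\mapsto (\id\otimes D)\circ\Delta$. This map sends convolution to composition: $(\id\otimes D_1D_2)\Delta = (\id\otimes D_1)\Delta\circ(\id\otimes D_2)\Delta$, by coassociativity. Composing with $\varepsilon$ recovers the functional. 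Therefore the commutator of left-invariant derivations corresponds to the commutator in $\cH(G)$. Since the Lie bracket on $\Lie(G)$ is defined (or shown in \cite{milne:algebraic} to be given) by the commutator of left-invariant derivations, the bracket is preserved.

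The main obstacle is Step 2. The only real content is matching conventions, namely that the bracket on $\Lie(G)$ in the reference is the commutator of invariant derivations, with left versus right invariance and the sign fixed consistently with the convolution order. I would first check this on $G=\mathrm{GL}_n$, where $\cH(G)$ contains the matrix algebra acting through the coordinate functions and both brackets are visibly the matrix commutator. I would then deduce the general case by embedding $G$ in $\mathrm{GL}_n$, using functoriality of $\cH$ and $\Lie$ and passing to inverse limits for the pro-algebraic case.
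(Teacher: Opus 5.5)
Your proposal is correct and follows the paper's proof: Step~1 is exactly the paper's argument that, after unfolding the coproduct and counit of $\cH(G)=\cO(G)^\vee$, the Lie-like elements are precisely the $\varepsilon$-derivations $\cO(G)\to F$, i.e.\ $\Lie(G)$ by Milne. For the bracket the paper only says ``one checks'', and your Step~2 is a valid way of doing that check: by coassociativity, $X\mapsto(\id\otimes X)\circ\Delta$ turns convolution into composition, you confirm the convention on $\mathrm{GL}_n$, and you transport it along a closed embedding, where $\cH$ of the embedding is an injective algebra map that restricts to the differential on Lie-like elements.
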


\begin{proof}
	For $X \in  \cH(G)$, unfolding the definition of the coproduct and counit, the property of being Lie-like is equivalent to $X\colon \cO(G) \to F$ being a derivation. One checks that under this isomorphism, the Lie bracket on $\cH(G)$ is the Lie bracket on $\Lie(G)$.
\end{proof}

If $G$ is a pro-unipotent group, it follows from \cite[A.6--A.8]{deligne-goncharov} that the inclusion $\Lie(G) \hookrightarrow \cH(G)$ realises $\cH(G)$ as the completed enveloping algebra of $\Lie(G)$.

\subsection{Hopf algebra cocycles}
\label{sec: Hopf cocycles}

Besides algebraic group cocycles and Lie algebra cocycles, we can also consider cocycles of Hopf algebras. 

\begin{defn}
	\label{def: Hopf algebra action}
	Let $H$ and $V$ be cocommutative complete Hopf algebras over~$F$. A \emph{(left) action} of $H$ on $V$ is a homomorphism of coalgebras in profinite-dimensional vector spaces
	\[ H \otimes V \to V, \quad X \otimes v \mapsto X.v \]
	satisfying
	\begin{enumerate}
		\makeatletter
		\item[(1a)] \def\@currentlabel{(1a)} \label{item:hopf-action-1}
		$1.v = v$ for $v \in V$; 
		\item[(1b)] \def\@currentlabel{(1b)} \label{item:hopf-action-associative}
		$(XY).v = X.(Y.v)$ for $X,Y \in H$, $v \in V$;
		\item[(2a)] \def\@currentlabel{(2a)} \label{item:hopf-action-on-1}
		$X.1 = \varepsilon(1)$ for $X \in H$;
		\item[(2b)] \def\@currentlabel{(2b)} \label{item:hopf-action-on-product}
		$X.(vw) = \sum_{(X)} (X_{(1)}.v) (X_{(2)}.w)$ for $X \in H$, $v,w \in V$.
	\end{enumerate}
\end{defn}

Here, we are using Sweedler notation in condition~\ref{item:hopf-action-on-product}. If $H$ and $V$ are finite-dimensional, the right hand side of \ref{item:hopf-action-on-product} means $\sum_i (X_{(1)}^i.v)(X_{(2)}^i.w)$ when the coproduct of $X$ is written as $\Delta X = \sum X_{(1)}^i \otimes X_{(2)}^i$. In general, one has to pass to finite-dimensional quotients, or interpret the formulas in \Cref{def: Hopf algebra action} as shorthand for expressing the commutativity of certain diagrams in the category of profinite-dimensional vector spaces with the completed tensor product; for example, \ref{item:hopf-action-on-product} expresses the equality of two maps $H \otimes V \otimes V \to V$.

Suppose $G$ is a pro-algebraic group over $F$ which acts on another pro-algebraic group $U$. The action map $G \times U \to U$ corresponds to an algebra homomorphism $\cO(U) \to \cO(G) \otimes \cO(U)$. Taking the linear dual yields a homomorphism of coalgebras in profinite-dimensional vector spaces
\begin{equation*}
	\label{eq: Hopf action}
	\tag{$\ast$}
	\cH(G) \otimes \cH(U) \to \cH(U).
\end{equation*}

\begin{lemma}
	\label{hopf-action-from-group-action}
	The map \eqref{eq: Hopf action} defines a Hopf algebra action in the sense of \Cref{def: Hopf algebra action}.
\end{lemma}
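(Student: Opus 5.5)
The plan is to dualise every axiom of \Cref{def: Hopf algebra action} into a statement about the coaction $\rho\colon \cO(U) \to \cO(G) \otimes \cO(U)$ dual to the action map $a\colon G \times U \to U$. Each of these statements will turn out to encode a standard property of a group acting on another group by automorphisms. By the monoidal duality~\eqref{eq:vector-space-duality}, an equality of two continuous maps between completed tensor products of the profinite-dimensional spaces $\cH(-)$ is equivalent to the equality of the dual maps between ordinary tensor products of coordinate rings. So every axiom can be checked as an identity of algebra maps out of $\cO(U)$, or equivalently of morphisms of schemes.

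First, \eqref{eq: Hopf action} is a coalgebra homomorphism. This holds because $\rho$ is an algebra homomorphism, and the coproduct and counit of $\cH$ are dual to multiplication and unit of $\cO$. This is already noted in the text; I will only record that the coalgebra structure on $\cH(G)\otimes\cH(U)$ is dual to the tensor product algebra $\cO(G)\otimes\cO(U) = \cO(G\times U)$.

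Next come the module axioms. Axiom (1a) is dual to $(\varepsilon_G \otimes \id)\circ\rho = \id$, which says the identity of $G$ acts trivially: $a(1,u)=u$. Axiom (1b) is dual to the coassociativity $(\Delta_G\otimes\id)\circ\rho = (\id\otimes\rho)\circ\rho$, which says $a(gh,u) = a(g,a(h,u))$. To check this, I will write $X.v$ as the composite $\cO(U)\xrightarrow{\rho}\cO(G)\otimes\cO(U)\xrightarrow{X\otimes v}F$. Then $(XY).v$ and $X.(Y.v)$ are both obtained by pairing $X\otimes Y\otimes v$ against the two sides of that identity.

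Finally, the compatibility with the algebra structure of $\cH(U)$, i.e.\ the fact that $G$ acts by group homomorphisms. Axiom (2a) (which I read as $X.1=\varepsilon(X)1$) is dual to the commutativity of the square $\rho\circ\varepsilon_U = (1\otimes\varepsilon_U)$ composed suitably, that is, to $a(g,1)=1$. Axiom (2b) is dual to the identity
\[
(\id_{\cO(G)}\otimes\Delta_U)\circ\rho = (\nabla_G\otimes\id\otimes\id)\circ(\id\otimes\tau\otimes\id)\circ(\rho\otimes\rho)\circ\Delta_U ,
\]
with $\tau$ the flip. This identity expresses $a(g,uw)=a(g,u)a(g,w)$: the diagonal copy of $g$ on the right-hand side is exactly what produces the Sweedler sum $\sum (X_{(1)}.v)(X_{(2)}.w)$ once dualised, since $\Delta_{\cH(G)}$ is dual to $\nabla_G$.

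The main obstacle is the bookkeeping in (2b): matching the Sweedler expression to this dual diagram in the completed setting. I would handle it by first treating finite-dimensional quotients, namely $\cH$ of algebraic quotient groups and truncations of $\cO$ by finite-dimensional subcoalgebras, where everything is an ordinary finite-dimensional computation. I would then pass to the limit using continuity of all maps.
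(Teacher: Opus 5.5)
Your proposal is correct and follows essentially the same route as the paper. Each axiom is the dual of a commutative diagram of schemes expressing that $G$ acts on $U$ by group automorphisms: the paper draws this diagram for (2b) and dualises twice, while you dualise once to an identity of comorphisms out of $\cO(U)$, which amounts to the same thing. Your reading of (2a) as $X.1=\varepsilon(X)1$ is the right one, but the identity dual to $a(g,1)=1$ should be written $(\id\otimes\varepsilon_U)\circ\rho=\eta_G\circ\varepsilon_U$. Also, the final reduction to finite-dimensional quotients is unnecessary, because the monoidal duality you invoke at the start already turns the completed-tensor statements into ordinary ones.
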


\begin{proof}
	The conditions \ref{item:hopf-action-1} and \ref{item:hopf-action-associative} follow from the associativity of the group action ($1.u = u$ and $gh.u = g.(h.u)$), while conditions \ref{item:hopf-action-on-1} and \ref{item:hopf-action-on-product} follow from the fact that $G$ acts by group automorphisms ($g.1 = 1$ and $g.(u_1 u_2) = (g.u_1)(g.u_2)$). For example, the last formula, saying that the $G$-action respects the multiplication on~$U$, is expressed by the commutative diagram
	\[
	\begin{tikzcd}[column sep=large]
		& & G \times U \arrow[rrd, "\mathrm{act}"] & & \\
		G \times U \times U \arrow[rru, "\id \times \mathrm{mult}"] \arrow[dr, "\Delta \times \id \times \id"] & & & & U. \\
		& \hspace*{-1cm} G \times G \times U \times U \arrow[r, "\id \times \mathrm{swap} \times \id"] & G \times U \times G \times U \arrow[r, "\mathrm{act} \times \mathrm{act}"] & U \times U \arrow[ru, "\mathrm{mult}"] & 
	\end{tikzcd}
	\]
	Applying the ``ring of functions'' functor (which reverses the arrows) and taking linear duals (which reverses them back) results in the following commutative diagram of complete Hopf algebras with coalgebra homomorphisms:
	\[
	\begin{tikzcd}[font=\scriptsize,column sep=large]
		& & \cH(G) \otimes \cH(U) \arrow[rrd, "\mathrm{act}"] & & \\
		\cH(G) \otimes \cH(U) \otimes \cH(U) \arrow[rru, "\id \otimes \mathrm{mult}"] \arrow[dr, "\Delta \otimes \id \otimes \id", end anchor = north west, xshift=-1cm] & & & & \cH(U). \\
		& \hspace*{-3.5cm} \cH(G) \otimes \cH(G) \otimes \cH(U) \otimes \cH(U) \arrow[r, "\id \otimes \mathrm{swap} \otimes \id"] & \cH(G) \otimes \cH(U) \otimes \cH(G) \otimes \cH(U) \arrow[r, "\mathrm{act} \otimes \mathrm{act}"] & \cH(U) \otimes \cH(U) \arrow[ru, "\mathrm{mult}"] & 
	\end{tikzcd}
	\]
	Tracing the element $X \otimes v \otimes w$ (for $X \in \cH(G)$, $v,w \in \cH(U)$) along the top path we get
	\[ X \otimes v \otimes w \mapsto X \otimes vw \mapsto X.(vw). \]
	Tracing it along the bottom path, we get
	\begin{align*}
		X \otimes v \otimes w &\mapsto \sum_{(X)} X_{(1)} \otimes X_{(2)} \otimes v \otimes w \mapsto \sum_{(X)} X_{(1)} \otimes v \otimes X_{(2)} \otimes w \mapsto \sum_{(X)} (X_{(1)}.v) \otimes (X_{(2)}.w)\\
		&\mapsto \sum_{(X)} (X_{(1)}.v)(X_{(2)}.w).
	\end{align*}
	Thus, the commutativity of the diagram is equivalent to \ref{item:hopf-action-on-product}. The other conditions for a Hopf algebra action are derived similarly.
\end{proof}

\begin{remarks}
	\label{rem: Hopf action recovers group and Lie action}
	\leavevmode
	\begin{enumerate}
		\item If $G$ is an affine algebraic group over~$F$ then by~\ref{def: Hopf algebra action}\,\ref{item:hopf-action-on-1}--\ref{item:hopf-action-on-product}, an element $g \in G(F)$, viewed as a group-like element of $\cH(G)$ (see \Cref{lem:grouplike}), acts on $\cH(U)$ as an algebra automorphism. Restricting to group-like elements of $\cH(U)$ recovers the group action of $G(F)$ on $U(F)$. 
		
		\item Similarly, for an element of the Lie algebra $X \in \Lie(G)$, viewed as a Lie-like element of $\cH(G)$ (see \Cref{lem:lielike}), the conditions \ref{def: Hopf algebra action}\,\ref{item:hopf-action-on-1}--\ref{item:hopf-action-on-product} say that~$X$ acts on $\cH(U)$ as a derivation:
		\[ X.(vw) = (X.v)w + v(X.w). \]
		Restricting also to Lie-like elements of $\cH(U)$ recovers the Lie algebra action of $\Lie(G)$ on $\Lie(U)$ by derivations which was defined in \Cref{ex: Lie action from group action}.
	\end{enumerate}
\end{remarks}

\begin{defn}
	\label{def: Hopf cocycles}
	Let $H$ and $V$ be cocommutative complete Hopf algebras over $F$, and suppose $H$ acts on $V$ (\Cref{def: Hopf algebra action}). A \emph{Hopf algebra cocycle} is a coalgebra homomorphism $c\colon H \to V$ satisfying
	\begin{equation}
		\label{eq:hopf-algebra-cocycle}
		c(XY) = \sum_{(X)} c(X_{(1)}) \cdot X_{(2)}.c(Y) \quad \text{for $X,Y \in H$}.
	\end{equation}
\end{defn}

If $G$ is an affine group scheme acting on another affine group scheme $U$, then a cocycle $c\colon G \to U$ corresponds to an algebra homomorphism $c^{\sharp}\colon \cO(U) \to \cO(G)$, whose linear dual is a map of coalgebras 
\begin{equation}
	\label{eq:hopf-cocycle}
	c\coloneqq (c^{\sharp})^\vee\colon \cH(G) \to \cH(U).
\end{equation}

\begin{lemma}
	\label{hopf-cocycle-from-group-cocycle}
	The association $c \mapsto (c^{\sharp})^{\vee}$ defines a bijection between group scheme cocycles $G \to U$ and Hopf algebra cocycles $\cH(G) \to \cH(U)$.
\end{lemma}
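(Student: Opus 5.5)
The plan is to pass everything through the duality \eqref{eq:vector-space-duality} and the anti-equivalence between affine schemes and commutative algebras. Concretely, I will rewrite the cocycle identity $c(g_1g_2) = c(g_1)\cdot g_1(c(g_2))$ as an equality of two morphisms of schemes $G\times G\to U$, dualise twice, and read off the identity \eqref{eq:hopf-algebra-cocycle}. This is the same diagrammatic strategy as in the proof of \Cref{hopf-action-from-group-action}.

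\emph{Step 1: coalgebra maps are scheme maps.} First I will record that a continuous linear map $\cH(G)\to\cH(U)$ is a coalgebra homomorphism (compatible with $\Delta$ and $\varepsilon$) if and only if its continuous dual $\cO(U)\to\cO(G)$ is a unital algebra homomorphism. This holds because $\Delta$ and $\varepsilon$ on $\cH$ are by definition the duals of multiplication and unit on $\cO$, and the duality is monoidal for the completed tensor product. Algebra homomorphisms $\cO(U)\to\cO(G)$ correspond to morphisms of schemes $G\to U$. Hence $c\mapsto (c^\sharp)^\vee$ is already a bijection between all scheme morphisms $G\to U$ and all coalgebra maps $\cH(G)\to\cH(U)$. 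It remains to show that the cocycle conditions correspond under this bijection.

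\emph{Step 2: the two sides of the cocycle identity.} The left side $g_1,g_2\mapsto c(g_1g_2)$ is the composite $c\circ\mathrm{mult}\colon G\times G\to U$. The right side is the composite
\[ G\times G\xrightarrow{\Delta\times\id} G\times G\times G\xrightarrow{c\times\id\times c} U\times G\times U\xrightarrow{\id\times\mathrm{act}} U\times U\xrightarrow{\mathrm{mult}} U. \]
Applying $\cO(-)$ and then the continuous dual, which is covariant and sends products to completed tensor products, turns these into continuous maps $\cH(G)\otimes\cH(G)\to\cH(U)$. I will trace $X\otimes Y$ through both. The first gives $X\otimes Y\mapsto XY\mapsto c(XY)$, using that the dual of $\mathrm{mult}^\sharp$ is the product on $\cH(G)$. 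The second gives
\[ X\otimes Y\mapsto \textstyle\sum_{(X)} X_{(1)}\otimes X_{(2)}\otimes Y\mapsto \sum_{(X)} c(X_{(1)})\otimes X_{(2)}\otimes c(Y)\mapsto \sum_{(X)} c(X_{(1)})\otimes X_{(2)}.c(Y)\mapsto \sum_{(X)} c(X_{(1)})\cdot X_{(2)}.c(Y). \]
Since $\cO(-)$ followed by dualising is faithful on scheme morphisms, the two scheme maps agree if and only if the two dual maps agree. That is exactly \eqref{eq:hopf-algebra-cocycle}.

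\emph{Step 3: where care is needed.} The main obstacle is justifying that functoriality is compatible with the completed tensor products: for instance, that the dual of $(c\times\id\times c)^\sharp$ is $c\otimes\id\otimes c$ on $\cH(G)\hat\otimes\cH(G)\hat\otimes\cH(G)$, and that an identity checked on elementary tensors $X\otimes Y$ suffices. I would handle this by the monoidality of \eqref{eq:vector-space-duality}, as in \Cref{sec:profinite-dimensional-vector-spaces}. Elementary tensors span a dense subspace of $\cH(G)\hat\otimes\cH(G)$ and both maps are continuous, so checking equality on them is enough. Alternatively, I can work throughout with finite-dimensional quotients, as the paper suggests after \Cref{def: Hopf algebra action}. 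With this in place, the bijection of Step 1 restricts to the claimed bijection between group scheme cocycles and Hopf algebra cocycles.
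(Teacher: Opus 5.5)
Your proposal is correct and follows essentially the same route as the paper. Both write the cocycle identity as an equality of two morphisms $G\times G\to U$ (the same diagram), apply $\cO(-)$ and dualise, read off \eqref{eq:hopf-algebra-cocycle}, and obtain the bijection from the fact that $\cO(-)$ and linear duality are (anti-)equivalences. Your Steps 1 and 3 only spell out what the paper leaves implicit: that coalgebra maps $\cH(G)\to\cH(U)$ correspond exactly to scheme morphisms $G\to U$, and that the construction is compatible with the completed tensor products.
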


\begin{proof}
	Let $c \colon G \to U$ be a group cocycle. We verify \eqref{eq:hopf-cocycle} that \eqref{eq:hopf-cocycle} is a Hopf algebra cocycle. The cocycle condition $c(gh) = c(g)\cdot (g.c(h))$ is expressed by the following commutative diagram:
	\[
	\begin{tikzcd}[column sep=large]
		& & G \arrow[rrd, "c"] & & \\
		G \times G \arrow[rru, "\mathrm{mult}"] \arrow[dr, "\Delta \times \id"] & & & & U. \\
		& \hspace*{-1cm} G \times G \times G \arrow[r, "c \times \id \times c"] & U \times G \times U \arrow[r, "\id \times \mathrm{act}"] & U \times U \arrow[ru, "\mathrm{mult}"] & 
	\end{tikzcd}
	\]
	Applying ring ``ring of functions'' functor and dualising yields the following commutative diagram of complete Hopf algebras and coalgebra homomorphisms:
	\[
	\begin{tikzcd}[font=\scriptsize,column sep=large]
		& & \cH(G) \arrow[rrd, "c"] & & \\
		\cH(G) \otimes \cH(G) \arrow[rru, "\mathrm{mult}"] \arrow[dr, "\Delta \otimes \id", end anchor = north west, xshift=-1cm] & & & & \cH(U). \\
		& \hspace*{-2cm} \cH(G) \otimes \cH(G) \otimes \cH(G) \arrow[r, "c \otimes \id \otimes c"] & \cH(U) \otimes \cH(G) \otimes \cH(U) \arrow[r, "\id \otimes \mathrm{act}"] & \cH(U) \otimes \cH(U) \arrow[ru, "\mathrm{mult}"] & 
	\end{tikzcd}
	\]
	In element notation this is precisely the Hopf algebra cocycle condition~\eqref{eq:hopf-algebra-cocycle}. Since taking rings of functions and linear duals are equivalences of categories, the construction $c \mapsto (c^{\sharp})^{\vee}$ is a bijection.
\end{proof}

\begin{remarks}
	\label{rem: Hopf coycle recovers group and Lie cocycle}
	\leavevmode
	\begin{enumerate}
		\item Restricting \eqref{eq:hopf-cocycle} to group-like elements recovers the group cocycle $c\colon G(F) \to U(F)$.
		\item Restricting \eqref{eq:hopf-cocycle} to Lie-like elements recovers the Lie algebra cocycle $\Lie(G) \to \Lie(U)$ from \Cref{thm: Lie cocycle from group cocycle}.
	\end{enumerate}
\end{remarks}

This justifies using the same notation for the group cocycle $c\colon G \times U \to U$ and the corresponding Hopf algebra cocycle $\cH(G) \otimes \cH(U) \to \cH(U)$. 

\subsection{The motivic Selmer scheme via Hopf cocycles}
\label{subsec:motivic-selmer-scheme-via-hopf-cocycles}

Returning to our specific situation, let $\pi_1(X,0) \twoheadrightarrow \Pi$ be a motivic quotient of the fundamental group of the thrice-punctured line at the tangential base point $0 = \vec{1}_0$, so that we have an action of the unipotent motivic Galois group $U_S^{\MT}$ on $\Pi^{\omega}$. By the previous discussion we have a Hopf algebra action
\[ \cH(U_S^{\MT}) \otimes \cH(\Pi^{\omega}) \to \cH(\Pi^{\omega}). \]
For a $\bQ$-algebra~$R$, denote by $\rZ^1(\cH(U_S^{\MT})_R, \cH(\Pi^{\omega})_R)^{\bG_m}$ the set of $\bG_m$-equivariant Hopf algebra cocycles, and denote by
\[ \underline{\rZ}^1(\cH(U_S^{\MT}), \cH(\Pi^{\omega}))^{\bG_m} \]
the corresponding functor on $\bQ$-algebras. A caveat is in order when applying the formalism of complete Hopf algebras from §\ref{sec:profinite-dimensional-vector-spaces}--\ref{sec: Hopf cocycles} over an algebra rather than a field. In this case the language of pro- and ind-objects is better suited than the language of topological modules and continuous linear maps. That is, over a $\bQ$-algebra~$R$, the duality \eqref{eq:vector-space-duality} generalises to
\[ \Ind(\Proj_f(R))^{\op} \cong \Pro(\Proj_f(R)), \quad M \mapsto M^{\vee} \coloneqq \Hom_R(M,R), \]
where $\Proj_f(R)$ denotes the category of finitely generated projective $R$-modules, and $\cH(U_S^{\MT})_R$ and $\cH(\Pi^{\omega})_R$ are viewed as Hopf algebra objects in $\Pro(\Proj_f(R))$ (with pro-completed tensor product). The notions of ``Hopf algebra action'' and ``Hopf algebra cocycle'' generalise in the obvious way.

By §\ref{sec: Hopf cocycles}, $\bG_m$-equivariant cocycles $U_S^{\MT} \to \Pi^{\omega}$ are equivalent to graded cocycles of complete Hopf algebras, so we find:

\begin{prop}
	\label{thm: Selmer scheme via Hopf cocycles}
	There is a canonical isomorphism
	\[\Sel_{S,\Pi}^{\mot}(X) \cong \underline{\rZ}^1(\cH(U_S^{\MT}), \cH(\Pi^{\omega}))^{\bG_m}. \]
\end{prop}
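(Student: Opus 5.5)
The plan is to compose the isomorphism of \Cref{thm: equivariant cocycles description of Selmer scheme},
\[ \Sel_{S,\Pi}^{\mot}(X) \cong \underline{\rZ}^1(U_S^{\MT}, \Pi^{\omega})^{\bG_m}, \]
with the dictionary of \Cref{hopf-cocycle-from-group-cocycle}, applied functorially in the $\bQ$-algebra $R$. Fix $R$. A $\bG_m$-equivariant cocycle $c\colon (U_S^{\MT})_R \to \Pi^{\omega}_R$ is an $R$-algebra map $c^\sharp\colon \cO(\Pi^{\omega})_R \to \cO(U_S^{\MT})_R$. We dualise it in the category $\Pro(\Proj_f(R))$ to get a coalgebra map $c\colon \cH(U_S^{\MT})_R \to \cH(\Pi^{\omega})_R$.

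The first step is to check that the argument of \Cref{hopf-cocycle-from-group-cocycle} goes through verbatim over $R$. The cocycle identity is the commutative pentagon of schemes over $R$. Applying $\cO(-)$ and the duality $\Ind(\Proj_f(R))^{\op} \cong \Pro(\Proj_f(R))$ turns it into the Hopf cocycle identity \eqref{eq:hopf-algebra-cocycle}. The Hopf action of $\cH(U_S^{\MT})_R$ on $\cH(\Pi^{\omega})_R$ is obtained from \Cref{hopf-action-from-group-action} in the same way.

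For this one needs $\cO(U_S^{\MT})$ and $\cO(\Pi^{\omega})$ to be ind-objects of finite free modules. This holds because both are $\bG_m$-graded with finite-dimensional graded pieces: the pieces of $\cO(\Pi^\omega)$ are finite-dimensional because $\Pi^{\omega}$ is finitely generated pro-unipotent, as shown in the proof of \Cref{thm: existence of invariant point}. The pieces of $\cO(U_S^{\MT})$ are finite-dimensional because $U_S^{\MT}$ is free on the generating set $\Sigma$ with each $\Sigma_n$ finite. Consequently the tensor products $\cO(-)\otimes R$ behave well. Since $\cO(-)$ and the duality are (anti-)equivalences, we obtain a bijection between cocycles over $R$ and Hopf cocycles over $R$, natural in $R$.

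The second step is to match the equivariance conditions. The $\bG_m$-action on $U_S^{\MT}$ by conjugation and on $\Pi^{\omega}$ corresponds to gradings on the coordinate rings. These dualise to gradings, that is, product decompositions, of $\cH(U_S^{\MT})$ and $\cH(\Pi^{\omega})$. Under the duality, $c$ is $\bG_m$-equivariant exactly when $c^\sharp$ is a graded algebra map, which holds exactly when the dual coalgebra map $c$ preserves gradings. This is the meaning of $\bG_m$-equivariance for Hopf cocycles.

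I expect the main obstacle to be this second step over a general $R$, together with the precise meaning of "graded" in the pro-setting. To handle it, I would express equivariance as commutativity of the diagram
\[ \bG_m \times U_S^{\MT} \to \Pi^{\omega}, \]
which compares $(\lambda,u)\mapsto c(\lambda u\lambda^{-1})$ with $(\lambda,u)\mapsto\lambda(c(u))$. I would then dualise this diagram using the coaction $\cO(\Pi^\omega)\to \bQ[t^{\pm1}]\otimes\cO(\Pi^\omega)$, and read off that $c^\sharp$ commutes with the coactions, which is the same as preserving degrees. Composing the two steps gives the asserted natural isomorphism.
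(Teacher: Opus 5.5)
Your proposal is correct and follows essentially the paper's argument: the paper composes \Cref{thm: equivariant cocycles description of Selmer scheme} with the dictionary of \Cref{hopf-cocycle-from-group-cocycle}, carried out over $R$ in $\Pro(\Proj_f(R))$, and identifies $\bG_m$-equivariance with gradedness, as you do. Your finiteness discussion is harmless but not needed for the dualisation step, since $\cO(-)\otimes_{\bQ} R$ is always a filtered colimit of finite free $R$-modules; the finite-dimensionality of the graded pieces only makes the product decomposition of the complete Hopf algebras $\cH$ explicit.
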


\section{Free pro-unipotent groups}
\label{sec:free-prounipotent-groups}

In this section we discuss the free pro-unipotent group on a set of generators $\Sigma$ and its related structures. In the context of this paper, the discussion applies both to $\pi_1^{\omega}(X,0)$, which is free on two generators $\{e_0,e_1\}$ (see §\ref{sec:structure-of-fundamental-group}), and to $U_S^{\MT}$, which is free on infinitely many generators $\Sigma = \coprod_{n=1}^{\infty} \Sigma_{n}$ (see §\ref{sec:generators of unipotent MT Galois group}).

Let $\Sigma$ be any set and let $F$ be a field of characteristic zero. We denote by $\fn(\Sigma)$ the free pro-nilpotent Lie algebra on~$\Sigma$. This is by definition the inverse limit 
\[ \fn(\Sigma) = \varprojlim_{\Sigma'\subseteq \Sigma \text{ finite}} \varprojlim_n\; \fn(\Sigma')/\fz^{n+1} \fn(\Sigma') \]
of the finite-dimensional nilpotent Lie algebras $\fn(\Sigma')/\fz^{n+1} \fn(\Sigma')$, with $\fz^{\bullet}$ denoting the descending central series. This makes $\fn(\Sigma)$ a Lie algebra object in the category $\ProFinVect_F$ of profinite-dimensional vector spaces over~$F$ with the completed tensor product. We denote by $U(\Sigma)$ the corresponding pro-unipotent group and call $U(\Sigma)$ the \emph{free pro-unipotent group} on~$\Sigma$. Note that the free generators in~$\Sigma$ are elements of the Lie algebra $\fn(\Sigma)$ rather than elements of the group $U(\Sigma)(F)$. Denote by $\cH(\Sigma) = \cO(U(\Sigma))^{\vee}$ the complete Hopf algebra associated to $U(\Sigma)$ (\Cref{def:cocommutative-hopf-algebra}). It has a concrete description in terms of non-commutative power series.

\begin{prop}[{\cite[A.9]{deligne-goncharov}}]
	\label{free-hopf-algebra}
	The complete Hopf algebra $\cH(\Sigma)$ of the free pro-unipotent group~$U(\Sigma)$ is the algebra of non-commutative power series:
	\[ \cH(U) = F\llangle \Sigma \rrangle. \]
\end{prop}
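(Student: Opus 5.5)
The plan is to identify $\cH(\Sigma)$ with $F\llangle \Sigma \rrangle$ through the universal property of the free pro-nilpotent Lie algebra and the description of $\cH$ of a pro-unipotent group as a completed enveloping algebra. It goes in three steps.

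\emph{Step 1 (the power series algebra as a complete Hopf algebra).} Give $F\llangle \Sigma \rrangle$ the profinite-dimensional topology as the inverse limit over finite $\Sigma' \subseteq \Sigma$ and $n$ of the finite-dimensional quotients $F\langle \Sigma'\rangle/(\text{words of length} > n)$. Define the coproduct by declaring each $\sigma \in \Sigma$ primitive, $\Delta \sigma = \sigma \otimes 1 + 1 \otimes \sigma$, and extending multiplicatively and continuously into the completed tensor product. The counit kills all nonempty words, and the antipode sends a word $\sigma_1\cdots\sigma_k$ to $(-1)^k\sigma_k\cdots\sigma_1$. This is a cocommutative complete Hopf algebra. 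Dually it is the shuffle Hopf algebra $F\langle \Sigma\rangle^{\shuffle}$ on words with the deconcatenation coproduct.

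\emph{Step 2 (the map).} Recall from §\ref{sec:grouplike-and-lielike} (via \cite[A.6--A.8]{deligne-goncharov}) that $\cH(U)$ is the completed universal enveloping algebra $\hat{\mathcal U}(\Lie U)$ for pro-unipotent $U$. So it suffices to show $\hat{\mathcal U}(\fn(\Sigma)) \cong F\llangle\Sigma\rrangle$ as complete Hopf algebras.
\begin{itemize}
\item The map $\fn(\Sigma) \to F\llangle\Sigma\rrangle^{\Lie}$ sending $\sigma \mapsto \sigma$ exists by the universal property of the free pro-nilpotent Lie algebra. The Lie-like elements of $F\llangle\Sigma\rrangle$ form a pro-nilpotent Lie algebra, since they lie in the augmentation ideal, which is topologically nilpotent.
\item This map extends to a continuous algebra map $\hat{\mathcal U}(\fn(\Sigma)) \to F\llangle\Sigma\rrangle$.
\item Conversely, $F\llangle\Sigma\rrangle$ is the free complete associative algebra on $\Sigma$. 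Sending $\sigma$ to the image of $\sigma$ in $\hat{\mathcal U}(\fn(\Sigma))$ therefore gives an inverse algebra map.
\item Both composites fix the generators and hence are the identity.
\item Compatibility with coproducts holds because both algebras are generated topologically by primitive elements.
\end{itemize}

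\emph{Step 3 (reduction to finite-dimensional quotients).} To make the topological arguments rigorous, first work at the finite level: finite $\Sigma'$ and the nilpotent quotient $\fn(\Sigma')/\fz^{n+1}$. There the statement becomes the classical fact that $\mathcal U$ of the free Lie algebra $\mathrm{Lie}\langle\Sigma'\rangle$ is the free associative algebra $F\langle\Sigma'\rangle$, with the Lie-like elements equal to $\mathrm{Lie}\langle\Sigma'\rangle$ (Friedrichs' criterion in characteristic zero). The argument then passes to the limit.

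The main obstacle is this limit step, namely matching the filtrations. The $I$-adic completion of $\mathcal U(\fn(\Sigma')/\fz^{n+1})$ must be cofinal with the length filtration of $F\llangle\Sigma'\rangle$, which amounts to showing $\varprojlim_n \hat{\mathcal U}(\fn/\fz^{n+1}) = \varprojlim_n F\langle\Sigma'\rangle/(\deg>n)$. I would handle this via PBW: words of length $\le n$ only involve brackets of length $\le n$, so the degree-$\le n$ truncations on both sides agree once the nilpotency class is at least $n$. Then take the limit over finite $\Sigma'$.
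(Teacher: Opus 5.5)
Your proposal is correct and follows essentially the route the paper relies on. The paper gives no argument beyond citing \cite[A.9]{deligne-goncharov}, which rests on the identification $\cH(U)\cong\hat{\mathcal U}(\Lie(U))$ from A.6--A.8 that you also invoke; after that, $\hat{\mathcal U}(\fn(\Sigma))$ and $F\llangle\Sigma\rrangle$ are both the free complete augmented algebra on $\Sigma$, and the coproducts agree since both are continuous algebra maps making each $\sigma$ primitive. The only thing to tidy is Step~3: at finite level you are not comparing with $\mathcal U$ of the full free Lie algebra but with $\mathcal U(\fn(\Sigma')/\fz^{n+1})/I^{k}$, i.e.\ $F\langle\Sigma'\rangle$ modulo the ideal generated by brackets of length $\geq n+1$ together with $(\deg\geq k)$, which equals $F\langle\Sigma'\rangle/(\deg\geq k)$ for $k\leq n+1$ simply because those brackets lie in degree $\geq n+1$, so Friedrichs' criterion is not needed.
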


Elements of $F\llangle \Sigma\rrangle$ have the form $\sum_w a_w \, w$ with $a_w \in F$ and $w$ running through the words over~$\Sigma$. Note that we allow infinite sums of bounded degree as elements of $F\llangle \Sigma \rrangle$, so for example $\sum_{\sigma \in \Sigma} \sigma \in F\llangle \Sigma \rrangle$ even when $\Sigma$ is infinite.
The (non-commutative) product on $F\llangle \Sigma\rrangle$ is given by concatenation of words. The coproduct is determined by 
\[ \Delta(\sigma) = 1 \otimes \sigma + \sigma \otimes 1, \]
for $\sigma \in \Sigma$, the counit $\varepsilon\colon \cH(U) \to \bQ$ sends a power series to its constant coefficient, and the antipode is given on monomials by
\[ S(\sigma_1\cdots \sigma_n) = (-1)^n \sigma_n \cdots \sigma_1. \]

As discussed in §\ref{sec:grouplike-and-lielike}, elements of $U(\Sigma)(F)$ can be identified with group-like elements of $F\llangle\Sigma\rrangle$, and elements of $\fn(\Sigma)$ can be identified with Lie-like elements of $F\llangle \Sigma\rrangle$.
For any Lie-like element $X \in F\llangle \Sigma \rrangle$, since $\varepsilon(X) = 0$, the power series~$X$ has no constant term, so $\exp(X) = \sum X^n/n!$ is a well-defined element of $F\llangle \Sigma \rrangle$, which can be shown to be group-like. In this way, the exponential map $\exp\colon \fn(\Sigma) \to U(\Sigma)(F)$ can be identified with the map $F\llangle \Sigma\rrangle^{\Lie} \to F\llangle \Sigma \rrangle^{\gp}$ given by the exponential series. Similarly, the inverse map $\log\colon U(\Sigma)(F) \to \fn(\Sigma)$ can be identified with the map $F\llangle \Sigma \rrangle^{\gp} \to F\llangle \Sigma \rrangle^{\Lie}$ given by the logarithm series 
\[ \log(X) = \sum_{n=1}^\infty \frac{(-1)^{n+1}}{n} (X-1)^n. \]

The identification $\cH(U(\Sigma)) = F\llangle \Sigma \rrangle$ can also be used to define functions on~$U(\Sigma)$ and $\fn(\Sigma)$. Since $\cH(G)$ is defined as the dual of $\cO(G)$, functions on~$U$ are simply elements of the (topological) dual space of~$F\llangle \Sigma \rrangle$:
\begin{equation}
	\label{eq:functions-via-dual}
	\cO(U(\Sigma)) = F\llangle \Sigma \rrangle^\vee.
\end{equation}
A basis of this dual space is given by the coefficient extraction functionals:

\begin{defn}
	\label{def:coefficient-extraction-functional}
	Let $w$ be a word on the alphabet~$\Sigma$. The \emph{coefficient extraction functional} $f_w \in \cO(U(\Sigma))$ is the function $F\llangle\Sigma\rrangle \to F$ mapping a power series $\sum_u a_u u$ to its $w$-coefficient.
\end{defn}

Restricting $f_w$ to group-like elements recovers the associated function $U(\Sigma)(F) \to F$ on $F$-valued points. One can also restrict $f_w$ to Lie-like elements to obtain linear functionals on $\fn(\Sigma)$. The multiplication of functions in $\cO(U(\Sigma))$ is dual to the coproduct in $\cH(\Sigma) = F\llangle \Sigma \rrangle$. As a consequence, the product of two coefficient extraction functionals is given by the \emph{shuffle product}:
\begin{equation}
	\label{eq:shuffle-product}
	f_w f_{w'} = \sum_{\pi \in \Sh(|w|, |w'|)} f_{\pi(ww')}.
\end{equation}
Here, $\Sh(n,m) \subseteq S_{n+m}$ denotes the set of all permutations satisfying $\pi^{-1}(1) < \ldots < \pi^{-1}(n)$ and $\pi^{-1}(n+1) < \ldots < \pi^{-1}(n+m)$, and for $\pi \in S_{n+m}$ and a word $\tau_1\cdots \tau_{n+m}$ of length~$n+m$ on~$\Sigma$ we write $\pi(\tau_1\cdots \tau_{n+m}) \coloneqq \tau_{\pi(1)} \cdots \tau_{\pi(n+m)}$. Thus, $\cO(U(\Sigma))$ is naturally the \emph{shuffle algebra} on~$\Sigma$.

The property of a power series to be group-like or Lie-like can be expressed as a condition on its coefficients as follows:

\begin{lemma}
	\label{lem: group-like and primitive coefficients}
	Let $A = \sum_{w} a_w \cdot w$ be a power series in $F\llangle \Sigma\rrangle$.
	\begin{enumerate}[label=(\alph*)]
		\item \label{item: group-like coefficients}
		$A$ is group-like if and only if its constant term satisfies $a_{\emptyset} = 1$ and for all words $w_1, w_2$, the shuffle relation holds:
		\[ a_{w_1} a_{w_2} = \sum_{\pi \in \Sh(|w_1|, |w_2|)} a_{\pi(w_1w_2)}. \]
		Here the sum is over all shuffles of $w_1,w_2$.
		
		\item \label{item: primitive coefficients}
		$A$ is Lie-like if and only if $a_{\emptyset} = 0$ and for all non-empty words $w_1,w_2$, the sum over all shuffled coefficients is zero:
		\[ \sum_{\pi \in \Sh(|w_1|, |w_2|)} a_{\pi(w_1 w_2)} = 0. \]
	\end{enumerate}
\end{lemma}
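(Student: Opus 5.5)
The plan is to translate both conditions into statements about the coproduct $\Delta$ on $F\llangle \Sigma\rrangle$ and then test them against the basis of the dual given by the coefficient extraction functionals $f_w$ of \Cref{def:coefficient-extraction-functional}. Since the multiplication on $\cO(U(\Sigma)) = F\llangle\Sigma\rrangle^\vee$ is dual to $\Delta$, for any $A \in F\llangle\Sigma\rrangle$ and words $w_1,w_2$ we have
\[ (f_{w_1} \otimes f_{w_2})(\Delta A) = (f_{w_1} f_{w_2})(A) = \sum_{\pi \in \Sh(|w_1|,|w_2|)} a_{\pi(w_1w_2)}, \]
using the shuffle product formula~\eqref{eq:shuffle-product}. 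On the other side, $(f_{w_1}\otimes f_{w_2})(A \otimes A) = a_{w_1}a_{w_2}$, and $(f_{w_1}\otimes f_{w_2})(A\otimes 1 + 1 \otimes A) = a_{w_1}\delta_{w_2,\emptyset} + \delta_{w_1,\emptyset}a_{w_2}$. Also the counit is $\varepsilon(A) = a_\emptyset$.

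The key step is that the functionals $f_{w_1}\otimes f_{w_2}$ separate points of the completed tensor product $F\llangle\Sigma\rrangle \hat\otimes F\llangle\Sigma\rrangle$: indeed this completed tensor product is (via the monoidal duality~\eqref{eq:vector-space-duality}) the dual of $\cO(U(\Sigma))\otimes\cO(U(\Sigma))$, which has basis $\{f_{w_1}\otimes f_{w_2}\}$ since the $f_w$ form a basis of $\cO(U(\Sigma))$; concretely it is the space of power series in pairs of words, $\sum b_{w_1,w_2}\, w_1\otimes w_2$. Hence two elements of the completed tensor product agree iff all their $(w_1,w_2)$-coefficients agree. This is the only point needing care (identifying $\cO(U(\Sigma))$ with the shuffle algebra on $\Sigma$ with basis $f_w$, as asserted before the lemma), and I would simply cite it from \eqref{eq:functions-via-dual} and \eqref{eq:shuffle-product}.

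For (a): $A$ is group-like iff $a_\emptyset = \varepsilon(A) = 1$ and $\Delta A = A\otimes A$, which by the separation step is equivalent to the displayed shuffle relation for all $w_1,w_2$. For (b): $A$ is Lie-like iff $a_\emptyset = 0$ and $\Delta A = A\otimes 1 + 1\otimes A$. Comparing $(w_1,w_2)$-coefficients: when both words are nonempty this gives exactly $\sum_\pi a_{\pi(w_1w_2)} = 0$; when $w_2 = \emptyset$ (or $w_1=\emptyset$) the only shuffle is the identity, so the left side is $a_{w_1}$ and the right side is $a_{w_1} + \delta_{w_1,\emptyset}a_\emptyset$, which holds automatically given $a_\emptyset = 0$. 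Thus the conditions in (b) are equivalent to Lie-likeness.
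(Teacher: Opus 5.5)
Your proposal is correct and is essentially the paper's argument. The paper writes out the deshuffle coproduct $\Delta w = \sum_{\pi(w_1w_2)=w} w_1\otimes w_2$ and compares the coefficients of $w_1\otimes w_2$ in the group-like and Lie-like conditions; you extract the same coefficients by pairing with $f_{w_1}\otimes f_{w_2}$ and invoking the dual shuffle formula \eqref{eq:shuffle-product}, which is the same computation seen from the dual side and handles infinite sums through duality rather than through continuity of $\Delta$.
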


\begin{proof}
	The counit on $F\llangle \Sigma \rrangle$ is given by extracting the constant coefficient. The coproduct is given by
	\[ \Delta w = \sum_{\substack{w_1,w_2,\pi\\ \pi(w_1w_2) = w}} w_1 \otimes w_2, \]
	the sum being over all ways of writing $w$ as a shuffle permutation of two words $w_1$ and $w_2$. This follows formally from the coproduct being multiplicative and given by $\Delta\sigma = 1 \otimes \sigma + \sigma \otimes 1$ on generators $\sigma \in \Sigma$. Writing down the group-like and Lie-like conditions and comparing coefficients of all $w_1 \otimes w_2$ yields the claimed formulas.
\end{proof}

\section{Functions on the fundamental group and its Lie algebra}
\label{sec:functions-on-fundamental-group}

\subsection{Coefficient extraction bases for the Lie algebra}
\label{sec:coefficient-extraction-bases-for-lie-algebra}

Recall from §\ref{sec:coordinates-on-selmer-scheme} that the construction of coordinates on the Selmer scheme $\Sel_{S,\Pi}^{\mot}(X)$ depends on choosing a basis of $\Lie(\Pi^{\omega})_{-n}^{\vee}$ for $n \geq 1$. One way of achieving this is by choosing a basis of $\Lie(\Pi^{\omega})_{-n}$ and taking its dual basis. For some choices of $\Pi$ we discussed specific bases of $\Lie(\Pi^{\omega})_{-n}$ in Section~\ref{sec:dimension-formulas}:
\begin{itemize}
	\item for $\Pi = \pi_1^{\mot}(X,0)$ the full fundamental group, the Lyndon basis of $\Lie(\Pi^{\omega})_{-n}$ consists of the standard bracketings $\lambda(w)$ with $w$ ranging over the Lyndon words of length~$n$ over the alphabet $\{e_0 < e_1\}$;
	\item for $\Pi = \Pi_{\PL}$ the polylogarithmic quotient, a basis of $\Lie(\Pi_{\PL}^{\omega})_{-n}$ is given by $e_0$ and $e_1$ if $n = 1$, and by $\ad(e_0)^{n-1}e_1$ if $n > 1$;
	\item for $\Pi = \Pi_{\met}$ the metabelian quotient, a basis of $\Lie(\Pi_{\met}^{\omega})_{-n}$ is given by $e_0$ and $e_1$ if $n = 1$, and by $\ad(e_0)^{k-1} \ad(e_1)^{n-k-1} [e_0,e_1]$ with $k=1,\ldots,n-1$, if $n > 1$.
\end{itemize}

We now describe a second way to construct bases of the dual spaces $\Lie(\Pi^{\omega})_{-n}^{\vee}$ using the coefficient extraction functionals from \Cref{def:coefficient-extraction-functional}.

As discussed in §\ref{sec:structure-of-fundamental-group}, the fundamental group $\pi_1^{\omega}(X,0)$ is free pro-unipotent on two canonical generators $e_0, e_1$. Thus, by~§\ref{sec:free-prounipotent-groups}, the complete Hopf algebra associated to $\pi_1^{\omega}(X,0)$ is the non-commutative power series algebra $\bQ\llangle e_0,e_1 \rrangle$, and for every word $w$ in the symbols $\{e_0,e_1\}$, we have a function $f_w \in \cO(\pi_1^{\omega}(X,0))$ given by the coefficient extraction functional $f_w\colon \bQ \llangle e_0,e_1\rrangle \to \bQ$. Its restriction to primitive elements defines a linear map $\Lie(\pi_1^{\omega}(X,0)) \to \bQ$ which factors through the graded piece $\Lie(\pi_1^{\omega}(X,0))_{-n}$, where $n$ is the length of~$w$. For suitable combinations of fundamental group quotients~$\Pi$ and words~$w$, the function $f_w$ remains well-defined on the quotient $\Lie(\Pi^{\omega})_{-n}$ of $\Lie(\pi_1^{\omega}(X,0))_{-n}$.

\begin{prop}
	\label{polylog-fw-basis}
	 For $\Pi = \Pi_{\PL}$ the polylogarithmic quotient, a basis of $\Lie(\Pi_{\PL}^{\omega})_{-n}^{\vee}$ is given by $f_{e_0}$ and $f_{e_1}$ if $n = 1$, and by $f_{e_0^{n-1}e_1}$ if $n > 1$.
\end{prop}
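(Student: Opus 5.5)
Two things need checking: that $f_{e_0^{n-1}e_1}$ (and $f_{e_0},f_{e_1}$) are well defined on $\Lie(\Pi_{\PL}^{\omega})_{-n}$, and that they are dual to a basis. Dimension counting then finishes, because by \Cref{polylog-lie-dimensions} the space $\Lie(\Pi_{\PL}^{\omega})_{-n}$ has dimension $2$ for $n=1$ and $1$ for $n\ge 2$.

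\emph{Case $n=1$.} Here $\Lie(\Pi_{\PL}^\omega)_{-1}=\Lie(\pi_1^\omega(X,0))_{-1}=\bQ e_0\oplus\bQ e_1$, since the polylogarithmic relations only involve commutators of length $\geq 2$. The functionals $f_{e_0},f_{e_1}$ are visibly the dual basis.

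\emph{Case $n\ge2$, well-definedness.} The kernel of $\Lie(\pi_1^\omega)_{-n}\to\Lie(\Pi_{\PL}^\omega)_{-n}$ is the degree $-n$ part of $[\fn_1,\fn_1]$, where $\fn_1$ is the ideal generated by $e_1$. I claim $f_{e_0^{n-1}e_1}$ vanishes on it. In the power series embedding, the ideal $\fn_1$ consists of Lie polynomials each of whose monomials contains $e_1$ at least once. A bracket $[A,B]=AB-BA$ with $A,B\in\fn_1$ homogeneous therefore has only monomials with at least two occurrences of $e_1$. Taking closures and graded pieces, every element of $[\fn_1,\fn_1]_{-n}$ has zero coefficient on $e_0^{n-1}e_1$, which contains $e_1$ only once. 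Hence $f_{e_0^{n-1}e_1}$ factors through $\Lie(\Pi_{\PL}^\omega)_{-n}$. For $n\ge2$ this degree-$n$ kernel is exactly the span of brackets of elements of $\fn_1$, using homogeneity of the ideal.

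\emph{Case $n\ge2$, nonvanishing.} It remains to show that $f_{e_0^{n-1}e_1}$ is nonzero on the basis vector $\ad(e_0)^{n-1}e_1$. Expanding $\ad(e_0)^{n-1}e_1=\sum_{k=0}^{n-1}(-1)^k\binom{n-1}{k}e_0^{n-1-k}e_1e_0^k$ in $\bQ\llangle e_0,e_1\rrangle$, the coefficient of $e_0^{n-1}e_1$ is the $k=0$ term, namely $1$. So the functional takes the value $1$ on the basis vector of the one-dimensional space, and therefore forms a basis of the dual.

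The only point needing care is the well-definedness step. One has to identify correctly the kernel of the quotient map on Lie algebras as (the closure of) $[\fn_1,\fn_1]$ in each degree, and this follows from $\Lie$ of a commutator subgroup of pro-unipotent groups being the commutator of the Lie ideals. The coefficient-counting argument on $e_1$-degree then handles it.
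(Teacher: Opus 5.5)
Your proof is correct and follows essentially the same route as the paper. The paper likewise shows that $f_{e_0^{n-1}e_1}$ descends to the quotient because it kills everything of $e_1$-degree $\geq 2$, and then uses $f_{e_0^{n-1}e_1}(\ad(e_0)^{n-1}e_1)=1$ on the one-dimensional space; you add detail the paper omits, namely the kernel described as the closure of $[\fn_1,\fn_1]$ and the explicit binomial expansion.
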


\begin{proof}
	This is clear for $n=1$, so assume $n \geq 2$. Note that $f_{e_0^{n-1}e_1}$ is well-defined on the polylogarithmic Lie algebra $\Lie(\Pi_{\PL}^{\omega})$ since it vanishes on Lie monomials of $e_1$-degree $> 1$. Observe that the coefficient of the word $e_0^{n-1} e_1$ after expanding the Lie brackets in $\ad(e_0)^{n-1}e_1$ is $1$, i.e. we have 
	\[ f_{e_0^{n-1}e_1}(\ad(e_0)^{n-1}e_1) = 1. \]
	So $f_{e_0^{n-1}e_1}$ in fact agrees with the dual basis of the (singleton) basis $\ad(e_0)^{n-1}e_1$ of $\Lie(\Pi_{\PL}^{\omega})_{-n}$.
\end{proof}

To obtain a similar statement for the full fundamental group, we need the following lemma:

\begin{lemma}
	\label{Lyndon lemma}
	For every Lyndon word $w$ on $\{e_0, e_1\}$ we have
	\[ \lambda(w) = w + \sum_{u > w} a_u u \]
	in $\bQ\llangle e_0, e_1 \rrangle$, for certain $a_u \in \bZ$. In other words, $w$ is the lexicographically smallest monomial appearing in $\lambda(w)$.
\end{lemma}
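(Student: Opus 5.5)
We argue by induction on the length $|w|$, using the recursive definition of the standard bracketing. For $|w| = 1$ we have $\lambda(w) = w$ and there is nothing to prove. For $|w| \geq 2$, let $w = uv$ be the standard factorisation, with $u$ and $v$ Lyndon words of smaller length. By induction,
\[ \lambda(u) = u + \sum_{u' > u} b_{u'} u', \qquad \lambda(v) = v + \sum_{v' > v} c_{v'} v', \]
with integer coefficients, and all monomials $u'$ (resp.\ $v'$) have the same length as $u$ (resp.\ $v$), since $\lambda$ is homogeneous. Integrality of all coefficients is automatic, since $\lambda(w)$ is an iterated commutator of letters.

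We expand
\[ \lambda(w) = \lambda(u)\lambda(v) - \lambda(v)\lambda(u) \]
and compare monomials of length $n = |w|$ in lexicographic order. Because all words considered have the same length $n$, lexicographic order is compatible with concatenation: if $u' \geq u$ and $v' \geq v$ with $|u'| = |u|$ and $|v'| = |v|$, then $u'v' \geq uv$, with equality only when $u' = u$ and $v' = v$. Hence every monomial in $\lambda(u)\lambda(v)$ other than $uv = w$ is strictly larger than $w$, and $w$ occurs there with coefficient $1$.

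It remains to treat the term $\lambda(v)\lambda(u)$. We claim every monomial in it is strictly larger than $w$. Each such monomial has the form $v'u'$ with $v' \geq v$ and $u' \geq u$, so it suffices to show $v' u' > uv$. We use the following standard facts about Lyndon words:
\begin{enumerate}
\item[(i)] if $w = uv$ is a factorisation of a Lyndon word into Lyndon words, then $u < v$;
\item[(ii)] a Lyndon word $w$ is strictly smaller than each of its proper nonempty suffixes, in particular $w < v$.
\end{enumerate}

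The main obstacle is that $w < v$ in lexicographic order does not immediately give $w < v v''$ prefix-wise, because $v$ is shorter than $w$. We argue as follows. Since $w < v$ and $v$ is not a prefix of $w$ (otherwise $v$ would be both a proper prefix and a suffix of the Lyndon word $w$, i.e.\ a border, which is impossible), there is a first position $i \leq |v|$ at which $w$ and $v$ differ, and there $w$ has the smaller letter.

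Now take $v' \geq v$ of the same length. Either $v' = v$, in which case $v'u'$ agrees with $v$ on the first $|v|$ letters and so exceeds $w$ at position $i$. Or $v' > v$, in which case $v'$ first differs from $v$ at some position $j$ with a larger letter there. If $j \geq i$, then $v'$ agrees with $v$ up to position $i$ and so exceeds $w$ at $i$. If $j < i$, then up to position $j$ the word $v'$ agrees with $v$, and hence with $w$, and at position $j$ it has a letter larger than that of $v$, which equals that of $w$. In every case $v'u' > w$ strictly.

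So $w$ appears in $\lambda(w)$ with coefficient exactly $1$, and every other monomial is lexicographically larger, which completes the induction.
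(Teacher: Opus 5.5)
Your proof is correct. It has the same skeleton as the paper's: induction on length, the standard factorisation $w = uv$, expansion of $[\lambda(u),\lambda(v)]$, and monotonicity of concatenation of equal-length words for the lexicographic order.

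The only real difference is how you handle the $\lambda(v)\lambda(u)$ terms.
\begin{itemize}
\item The paper applies the same monotonicity you already used for $\lambda(u)\lambda(v)$ to get $v'u' \geq vu$. It then notes $vu > uv = w$, because $vu$ is a nontrivial cyclic rotation of $w$. That is exactly the definition of a Lyndon word the paper adopts.
\item You instead invoke the suffix characterisation $w < v$ and compare positions by hand. This characterisation is equivalent to the rotation definition, but it is a separate standard theorem that you have to import.
\end{itemize}
Your case analysis is in effect a re-derivation of $v'u' \geq vu > w$. The paper's route is therefore shorter and self-contained given its definition, while yours is equally valid but depends on the extra fact (ii).

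Three minor remarks:
\begin{itemize}
\item Fact (i) is never used.
\item The border argument is redundant. With the usual convention that a proper prefix is smaller, $w < v$ with $|v| < |w|$ already forces a first mismatch at some position $i \le |v|$.
\item In your case $j = i$, the word $v'$ does not agree with $v$ at position $i$. The conclusion still holds since $v'_i > v_i > w_i$, and over $\{e_0 < e_1\}$ this case cannot occur.
\end{itemize}
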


\begin{proof}
	We prove this by induction. The case where $w$ is a single symbol is trivial. Assume $|w| \geq 2$ and let $w = w_1 w_2$ be the standard factorisation. By induction, we have
	\begin{align*}
		\lambda(w_1) &= w_1 + \sum_{u_1 > w_1} a_{u_1} u_1,\\
		\lambda(w_2) &= w_2 + \sum_{u_2 > w_2} b_{u_2} u_2,
	\end{align*}
	for certain $a_{u_1}, b_{u_2} \in \bZ$. This implies
	\begin{align*}
		\lambda(w) &= [\lambda(w_1), \lambda(w_2)] = \lambda(w_1)\lambda(w_2) - \lambda(w_2) \lambda(w_1)\\
		&= w_1 w_2 - w_2 w_1 + \text{linear combination of $w_1u_2, \; u_1 w_2,\; u_1 u_2,\; u_2 w_1,\; w_2 u_1,\; u_2 u_1$},
	\end{align*}
	with $u_i$ running through the words of length $|w_i|$ which are lexicographically larger than $w_i$. For all $u_1 > w_1$ and $u_2 > w_2$ we have
	\begin{align*}
		w_1 u_2 &> w_1 w_2,\\
		u_1 w_2 &> w_1 w_2,\\
		u_1 u_2 &> w_1 w_2,\\
		u_2 w_1 &> w_2 w_1,\\
		w_2 u_1 &> w_2 w_1,\\
		u_2 u_1 &> w_2 w_1.
	\end{align*}
	Since $w = w_1 w_2$ is a Lyndon word and $w_2 w_1$ is a cyclic permutation of it, we have $w_2 w_1 > w_1 w_2$. Thus, every monomial appearing in $\lambda(w)$ and different from $w$ itself is lexicographically larger than~$w$.
\end{proof}

\begin{prop}
	\label{lyndon-fw-basis}
	The functions $f_w$ with $w$ running through the Lyndon words of length~$n$ on $\{e_0,e_1\}$ form a basis of $\Lie(\pi_1^\omega(X,0))_{-n}^\vee$.
\end{prop}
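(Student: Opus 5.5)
By \Cref{lyndon-basis}, the standard bracketings $\lambda(w)$, with $w$ running over the Lyndon words of length~$n$, form a basis of $\Lie(\pi_1^\omega(X,0))_{-n}$. Denote these words by $w_1 < w_2 < \cdots < w_m$ in lexicographic order, where $m = M(2,n)$. Since $\dim \Lie(\pi_1^\omega(X,0))_{-n}^\vee = m$ and there are exactly $m$ functionals $f_{w_i}$, it is enough to show that they are linearly independent on $\Lie(\pi_1^\omega(X,0))_{-n}$. For this I will consider the square matrix
\[ A = \bigl(f_{w_i}(\lambda(w_j))\bigr)_{1\leq i,j\leq m} \]
and show that it is invertible.

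The entries of $A$ are read off from \Cref{Lyndon lemma}. Each $\lambda(w_j)$ is a homogeneous element of degree~$n$ in $\bQ\llangle e_0,e_1\rrangle$, and it equals $w_j$ plus a $\bZ$-linear combination of words $u > w_j$. This gives two facts:
\[ f_{w_j}(\lambda(w_j)) = 1, \qquad f_{w_i}(\lambda(w_j)) = 0 \quad \text{whenever } w_i < w_j. \]
So $A$ has ones on the diagonal, and $A_{ij} = 0$ for $i < j$. Hence $A$ is lower unitriangular with respect to the lexicographic ordering, and in particular $\det A = 1$.

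Invertibility of $A$ means that the restrictions of $f_{w_1},\ldots,f_{w_m}$ to $\Lie(\pi_1^\omega(X,0))_{-n}$ are linearly independent. As there are $m$ of them in an $m$-dimensional dual space, they form a basis. One could also say this directly: the $f_{w_i}$ are obtained from the dual Lyndon basis $\lambda(w_j)^\vee$ by the invertible matrix $A$.

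The one point that needs care is the meaning of ``$u > w$'' in \Cref{Lyndon lemma}. Only words $u$ of the same length $n$ as $w$ can occur, because $\lambda(w)$ is homogeneous. Among words of equal length the lexicographic order is a total order, and $w_i < w_j$ means $w_i$ is different from all such $u$. Therefore no word $u$ occurring in $\lambda(w_j)$ can equal $w_i$ when $w_i < w_j$, which justifies the vanishing of the above-diagonal entries. Apart from this, the argument is the standard triangularity argument.
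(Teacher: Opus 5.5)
Your proof is correct and follows essentially the same route as the paper: it writes $f_u = \sum_w f_u(\lambda(w))\, D_w$ in terms of the dual basis $(D_w)$ of the Lyndon basis. It then uses \Cref{Lyndon lemma} to show that this change-of-basis matrix is lower unitriangular in the lexicographic order, which is exactly your matrix $A$.
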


\begin{proof} 
	Denote by $(D_w)_{w \text{ Lyndon}}$ the dual basis of the Lyndon basis of $\Lie(\pi_1^{\omega}(X,0))_{-n}$, i.e.\
	\[ D_w(\lambda(w')) = \delta_{w,w'} \]
	for any two Lyndon words $w$ and $w'$ of length~$n$. Since every element $\alpha$ of $\Lie(\pi_1^{\omega}(X,0))_{-n}$ can be written as $\alpha = \sum_{w\text{ Lyndon}} D_w(\alpha) \lambda(w)$, the $f_w$ and the $D_w$ are related by
	\[ f_u = \sum_{w \text{ Lyndon}} f_u(\lambda(w)) D_w \]
	for any Lyndon word~$u$. By \Cref{Lyndon lemma}, we have $f_w(\lambda(w)) = 1$ and $f_u(\lambda(w)) = 0$ whenever $u < w$, hence
	\[ f_u = D_u + \sum_{w < u \text{ Lyndon}} f_u(\lambda(w)) D_w. \]
	Thus, if the Lyndon words are indexed in increasing lexicographic order, the $f_w$ are obtained from the $D_w$ by applying a lower-triangular matrix with ones on the diagonal. This is invertible, hence the $f_w$ also form a basis.
\end{proof}

\begin{example}
	\label{Lyndon basis example}
	The Lyndon words of $(e_0, e_1)$-bidegree $(3,2)$ are $w_1 = e_0 e_0 e_0 e_1 e_1$ and $w_2 = e_0 e_0 e_1 e_0 e_1$. We have
	\begin{align*}
		\lambda(e_0 e_0 e_0 e_1 e_1) &= [e_0, [e_0, [e_0, e_1], e_1]] = e_0 e_0 e_0 e_1 e_1 - 2 e_0 e_0 e_1 e_0 e_1 + \ldots,\\
		\lambda(e_0 e_0 e_1 e_0 e_1) &= [[e_0, [e_0, e_1]], [e_0, e_1]] = e_0 e_0 e_1 e_0 e_1 + \ldots,
	\end{align*}
	where the ellipses represent a $\bZ$-linear combination of non-Lyndon words. Hence the $f_{w_i}$ and $D_{w_i}$ are related via the unipotent matrix
	\[ \left( \begin{matrix}
		f_{w_1}\\ f_{w_2}
	\end{matrix}\right) = \left( \begin{matrix}
		1 & 0 \\
		-2 & 1
	\end{matrix}\right) \left(\begin{matrix}
		D_{w_1}\\ D_{w_2}
	\end{matrix}\right). \]
\end{example}

\subsection{Iterated integral functions on the fundamental group}
\label{iterated-integral-functions}

When we think of $f_w$ as an algebraic function on $\pi_1^{\omega}(X,0)$ (rather than a linear functional $\bQ\llangle e_0,e_1\rrangle \to \bQ$) we also denote it by $\Li_w$ and call it a multiple polylogarithm function. In other words, for any word $w$ on $\{e_0,e_1\}$, the function 
\[ \Li_w\colon \pi_1^{\omega}(X,0) \to \bA^1 \]
sends an element $\gamma \in \pi_1^{\omega}(X,0)(R)$ valued in some $\bQ$-algebra~$R$ to the $w$-coefficient of the expansion of $\gamma$ as a grouplike power series in $\bQ\llangle e_0,e_1\rrangle$. Thus, tautologically, we have
\[ \gamma = \sum_w \Li_w(\gamma) w. \]
With the isomorphism $\pi_1^{\omega}(X,0) \otimes_{\bQ} K \cong \pi_1^{\dR}(X,0)$, the function $\Li_w$ is given by the Tannakian iterated integral over a sequence of the differential forms, with $e_0$ corresponding to $\omega_0 \coloneqq \mathrm{d}t/t$ and $e_1$ corresponding to $\omega_1 \coloneqq \mathrm{d}t/(1-t)$:
\[ \Li_w(\gamma) = \int_{\gamma} \omega_{i_1}\cdots \omega_{i_r} \quad \text{for $w = e_{i_1}\ldots e_{i_r}$, $i_j \in \{0,1\}$, $\gamma \in \pi_1^{\dR}(X,0)$}. \]
Formula~\eqref{eq:shuffle-product} then translates into the shuffle product formula for iterated integrals:
\[ \left(\int_{\gamma} \omega_{i_1}\cdots \omega_{i_r}\right) \left(\int_{\gamma} \omega_{i_{r+1}}\cdots \omega_{i_{r+s}}\right) = \sum_{\pi \in \Sh(r,s)} \int_{\gamma} \omega_{i_{\pi(1)}}\cdots \omega_{i_{\pi(r+s)}}. \]

As discussed in the previous subsection, the coefficient extraction functionals $f_w$ also define linear functions on the Lie algebra $\Lie(\pi_1^{\omega}(X,0))$. They are related to the functions $\Li_w$ on $\pi_1^{\omega}(X,0)$ as follows:

\begin{lemma}
	\label{fw-and-Li-w}
	For $\gamma \in \pi_1^\omega(X,0)$ and any word $w$ on $\{e_0,e_1\}$ we have
	\[ f_w(\log(\gamma)) = \sum_{m=1}^\infty \frac{(-1)^{m+1}}{m} \sum_{\substack{w=w_1\cdots w_m\\ |w_i| \geq 1}} \Li_{w_1}(\gamma) \cdots \Li_{w_m}(\gamma).\]
	Here the inner sum is over all decompositions of $w$ into $m$ nonempty subwords.
\end{lemma}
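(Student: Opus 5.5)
The identity is a direct coefficient comparison in the power series model of \Cref{free-hopf-algebra}. I would view $\gamma \in \pi_1^{\omega}(X,0)(R)$ as a group-like element of $R\llangle e_0,e_1\rrangle$ and use the tautological expansion $\gamma = \sum_w \Li_w(\gamma)\, w$. By \Cref{lem: group-like and primitive coefficients}\ref{item: group-like coefficients}, the constant coefficient is $1$. Hence
\[ \gamma - 1 = \sum_{|u|\geq 1} \Li_u(\gamma)\, u \]
has no constant term. As recalled in §\ref{sec:free-prounipotent-groups}, the logarithm $\log\colon U(\Sigma) \to \fn(\Sigma)$ is identified with the logarithm series
\[ \log(\gamma) = \sum_{m\geq 1} \frac{(-1)^{m+1}}{m} (\gamma-1)^m \]
in the complete Hopf algebra. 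This series converges, because $(\gamma-1)^m$ only involves words of length $\geq m$.

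Next I would compute the $w$-coefficient of $(\gamma-1)^m$. Multiplication in $R\llangle e_0,e_1\rrangle$ is concatenation of words. Expanding the $m$-fold product therefore gives
\[ (\gamma-1)^m = \sum_{u_1,\ldots,u_m \text{ nonempty}} \Li_{u_1}(\gamma)\cdots\Li_{u_m}(\gamma)\, u_1\cdots u_m, \]
so $f_w((\gamma-1)^m)$ is the sum over all decompositions $w = w_1\cdots w_m$ into nonempty subwords of $\prod_i \Li_{w_i}(\gamma)$. Applying the linear functional $f_w$ termwise to the logarithm series then gives exactly the claimed formula.

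The only point needing care is justifying the termwise application of $f_w$ to the infinite series, and it is mild. The functional $f_w$ is continuous on the profinite-dimensional space $R\llangle e_0,e_1\rrangle$, since it factors through truncation at word length $|w|$. Moreover, the terms with $m > |w|$ contribute nothing, so the outer sum in the statement is in fact finite (it runs over $m \leq |w|$). Over a general $\bQ$-algebra $R$ the same argument applies verbatim in $\Pro(\Proj_f(R))$, since only finitely many coefficients are involved at each truncation level.
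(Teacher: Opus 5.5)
Your proposal is correct and follows the paper's proof essentially verbatim: expand $\log(\gamma)$ via the logarithm series, read off the $w$-coefficient of $(\gamma-1)^m$ as a sum over decompositions $w = w_1\cdots w_m$, and use that $\gamma$ has constant term $1$, so that only decompositions into nonempty subwords contribute. Your remarks on applying $f_w$ termwise and on the outer sum effectively stopping at $m \leq |w|$ are accurate; the paper leaves the first implicit and states the second just before its proof.
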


Note that the sum is finite since $w$ can be split into at most $|w|$ nonempty parts.

\begin{proof}
	We view $\gamma$ as a group-like element of $R\llangle e_0,e_1 \rrangle$, for a $\bQ$-algebra~$R$, and use the logarithm series:
	\begin{align*}
		f_w(\log(\gamma)) &= \sum_{m=1}^\infty \frac{(-1)^{m+1}}{m} f_w((\gamma -1)^m)\\
		&= \sum_{m=1}^\infty \frac{(-1)^{m+1}}{m} \sum_{w=w_1\cdots w_m} f_{w_1}(\gamma-1)\cdots f_{w_m}(\gamma-1).
	\end{align*}
	Since $\gamma$ has constant term~1, we have
	\[ f_{w_i}(\gamma - 1) = \begin{cases}
			\Li_{w_i}(\gamma), & \text{if $|w_i| \geq 1$,}\\
			0, & \text{if $w_i$ is empty}.
	\end{cases} \]
	Thus, we arrive at the claimed formula.
\end{proof}

\begin{prop}
	\label{thm: coordinates on full group}
	\leavevmode
	\begin{enumerate}[label=(\alph*)]
		\item The ring of functions on $\pi_1^\omega(X,0)$ is the polynomial algebra in the functions $\Li_w$ with $w$ running through the Lyndon words.
		
		\item The ring of functions on $\Lie(\pi_1^\omega(X,0))$ is the polynomial algebra in the functions $f_w$ with $w$ running through the Lyndon words.
	\end{enumerate}
\end{prop}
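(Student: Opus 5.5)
Part (a) is Radford's theorem for the shuffle algebra; part (b) I would get from part (a) through the logarithm, using that coordinate rings of pro-vector spaces are symmetric algebras on their duals.

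\emph{Part (a).} By \Cref{free-generators} and \Cref{free-hopf-algebra}, $\cO(\pi_1^\omega(X,0)) = \bQ\llangle e_0,e_1\rrangle^\vee$, which by \eqref{eq:shuffle-product} is the shuffle algebra on $\{e_0,e_1\}$ with basis $\{\Li_w\}$. The claim is then Radford's theorem: over $\bQ$ the shuffle algebra is freely generated by the Lyndon words. I would argue as follows.
\begin{enumerate}
\item[(i)] Generation. Every word $w$ has a unique Chen--Fox--Lyndon factorisation $w = l_1 l_2\cdots l_k$ with Lyndon words $l_1 \geq \cdots \geq l_k$.
\item[(ii)] Leading term. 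I would show
\[ \Li_{l_1}\cdots\Li_{l_k} = c\,\Li_w + \sum_{u<w}(\ast)\,\Li_u \]
with $c$ a positive integer (a product of multiplicity factorials) and all other words lexicographically smaller than $w$. This is the standard fact that the lexicographically largest shuffle of a nonincreasing list of Lyndon words is their concatenation.
\item[(iii)] Induction. Triangularity on words of fixed length, ordered lexicographically, shows that the monomials in the $\Li_l$ span; since $c\neq0$ in characteristic $0$, we may divide by it.
\item[(iv)] Independence. Distinct monomials $\Li_{l_1}\cdots\Li_{l_k}$ have distinct leading words $w$, by uniqueness of the Lyndon factorisation, so they are linearly independent. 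Hence the algebra is polynomial on the Lyndon words.
\end{enumerate}

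\emph{Part (b).} $\Lie(\pi_1^\omega(X,0))$ is a pro-vector space, so its coordinate ring is $\mathrm{Sym}$ of its continuous dual $\bigoplus_n \Lie_{-n}^\vee$. By \Cref{lyndon-fw-basis}, the $f_w$ with $w$ Lyndon of length $n$ form a basis of $\Lie_{-n}^\vee$, so the coordinate ring is the polynomial algebra on these $f_w$. This needs no part (a) and only the graded decomposition. I would still note the compatibility with (a): the isomorphism $\log\colon \pi_1^\omega \to \Lie$ has pullback $f_w\mapsto f_w\circ\log$, which by \Cref{fw-and-Li-w} equals $\Li_w$ plus a polynomial in $\Li_u$ for shorter words $u$. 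Via part (a), this gives the same conclusion through a unitriangular change of generators by weight.

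\emph{Main obstacle.} The hard step is the leading-term claim in (ii), i.e.\ the combinatorics of Lyndon words under shuffles. I would prove it by induction on $k$, using two facts: a Lyndon word is strictly smaller than each of its proper suffixes, and for $l\geq l'$ Lyndon the largest shuffle of $l$ and $l'$ is $ll'$. Alternatively, one can cite \cite{reutenauer-book} (Radford's theorem, Thm.~6.1) directly.
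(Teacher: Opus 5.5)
Your argument is correct, and your part (b) is the paper's: the coordinate ring of the pro-vector space $\Lie(\pi_1^\omega(X,0))$ is the symmetric algebra on $\bigoplus_n \Lie(\pi_1^\omega(X,0))_{-n}^\vee$, and \Cref{lyndon-fw-basis} supplies the basis.

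For part (a) your route differs from the paper's main one. The paper deduces (a) from (b) by pulling back along $\log\colon \pi_1^\omega(X,0)\to\Lie(\pi_1^\omega(X,0))$. By \Cref{fw-and-Li-w}, $\log^\sharp f_l = \Li_l + (\text{a polynomial in the } \Li_u \text{ with } |u|<|l|)$. A substitution that is unitriangular with respect to weight sends a free polynomial generating system to another one. So the paper runs your "compatibility" remark in the direction (b)$\Rightarrow$(a), and makes it the proof. It mentions Radford's theorem only as a one-line alternative, and that alternative is the route you take.

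What your route buys is independence from the Lie side. It is pure shuffle-algebra combinatorics and uses neither the Lyndon basis of the free Lie algebra nor the exp/log correspondence. What it costs is that all the difficulty sits in your leading-term claim (ii). The paper instead gets (a) almost for free by reusing the triangularity already proved in \Cref{Lyndon lemma}.

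If you prove (ii) yourself rather than cite \cite{reutenauer-book}, the two facts you list do not by themselves carry the induction on $k$. Since $l_1\cdots l_{k-1}$ is not Lyndon, you need three further ingredients:
\begin{enumerate}
\item the maximal-shuffle statement with one argument a nonincreasing product of Lyndon words, not just a single Lyndon word;
\item the easy fact that, for words of fixed length, the maximal shuffle of $l$ with $v$ increases strictly with $v$ (use the same interleaving pattern);
\item a separate count of the interleavings that reproduce $w$, which gives $c$ as the product of the factorials of the multiplicities.
\end{enumerate}
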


\begin{proof}
	The second statement is a direct consequence of \Cref{lyndon-fw-basis}. The first one follows from the second via pullback along the logarithm isomorphism $\pi_1^\omega(X,0) \cong \Lie(\pi_1^\omega(X,0))$ since $\log^{\sharp}(L_w)$ equals $\Li_w$ up to a polynomial in $\Li_u$'s of smaller degree by \Cref{fw-and-Li-w}. Alternatively, the first statement follows from Radford's Theorem on polynomial generators of shuffle algebras.
\end{proof}

\subsection{Polylogarithmic functions}

Consider the polylogarithmic words 
\[ e_0 \quad \text{and} \quad e_0^{n-1} e_1 \text{ for $n \geq 1$}. \]
We use special notation for their coefficient extraction functions:

\begin{defn}
	\label{def: polylog functions}
	On $\Lie(\pi_1^\omega(X,0))$ define
	\[ L_0 \coloneqq f_{e_0}, \quad L_n \coloneqq f_{e_0^{n-1} e_1} \text{ for $n \geq 1$}. \]
	On $\pi_1^\omega(X,0)$ define
	\[ \Log \coloneqq \Li_{e_0}, \quad \Li_n \coloneqq \Li_{e_0^{n-1} e_1} \text{ for $n \geq 1$}. \]
\end{defn}

Note that the polylogarithmic words are Lyndon words, so in particular $\Log$ and $\Li_n$ for $n \geq 1$ are algebraically independent. The subalgebra of $\cO(\pi_1^\omega(X,0))$ generated by them is closed under the coproduct, hence is a sub-Hopf algebra. This is the ring of functions on the polylogarithmic quotient $\pi_1^{\omega}(X,0)_{\PL}$.

As a special case of \Cref{fw-and-Li-w}, we can express the pullback of the functions $L_n$ along the logarithm isomorphism in terms of polylogarithms. Applying the lemma to $w = e_0$ we obtain
\begin{equation}
	\label{eq: L_0}
	L_0(\log(\gamma)) = \Log(\gamma).	
\end{equation}

For $w = e_0^{n-1}e_1$ we obtain the following:

\begin{prop}
	\label{thm: L_n in terms of polylogs}
	For $\gamma \in \pi_1^\omega(X,0)$ and $n \geq 1$ we have
	\begin{equation}
		\label{eq: L_n}
		L_n(\log(\gamma)) = \sum_{k=0}^{n-1} \frac{B_k}{k!} \Log(\gamma)^k \Li_{n-k}(\gamma).
	\end{equation}
	Here $B_k$ denotes the $k$-th Bernoulli number (with the convention $B_1 = -1/2$).
\end{prop}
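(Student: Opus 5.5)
Apply \Cref{fw-and-Li-w} with $w = e_0^{n-1}e_1$ and organise the resulting sum by the position of the letter $e_1$. In any decomposition $w = w_1\cdots w_m$ into nonempty subwords, the first $m-1$ pieces are powers $e_0^{a_1},\dots,e_0^{a_{m-1}}$ with $a_i\ge 1$. The last piece is $e_0^{n-1-k}e_1$, where $k = a_1+\dots+a_{m-1}$ with $0\le k\le n-1$.

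The key simplification is that $\Li_{e_0^a}(\gamma) = \Log(\gamma)^a/a!$. This follows from the shuffle relation of \Cref{lem: group-like and primitive coefficients}\ref{item: group-like coefficients}: the word $e_0$ shuffled with itself $a$ times gives $a!\,e_0^a$, so $\Log(\gamma)^a = a!\,\Li_{e_0^a}(\gamma)$. Substituting, the summand becomes
\[ \frac{(-1)^{m+1}}{m}\,\frac{\Log(\gamma)^k}{a_1!\cdots a_{m-1}!}\,\Li_{n-k}(\gamma), \]
with $\Li_{n-k} = \Li_{e_0^{n-k-1}e_1}$ as in \Cref{def: polylog functions}. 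Collecting terms with fixed $k$ gives
\[ L_n(\log\gamma) = \sum_{k=0}^{n-1} c_k\, \Log(\gamma)^k\, \Li_{n-k}(\gamma), \qquad c_k = \sum_{j\ge 0} \frac{(-1)^{j}}{j+1}\sum_{\substack{a_1+\dots+a_j = k\\ a_i\ge1}} \frac{1}{a_1!\cdots a_j!}, \]
where $j = m-1$. The coefficients $c_k$ do not depend on $n$.

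It remains to identify $c_k = B_k/k!$, and this is where I expect the main work. The inner sum is the coefficient of $x^k$ in $(e^x-1)^j$, so
\[ \sum_k c_k x^k = \sum_{j\ge0}\frac{(-1)^j}{j+1}(e^x-1)^j = \frac{\log(1+(e^x-1))}{e^x-1} = \frac{x}{e^x-1}. \]
This is exactly the generating function $\sum_k B_k x^k/k!$ with the convention $B_1=-1/2$. Since $e^x-1$ has no constant term, the formal manipulations are legitimate in $\bQ[[x]]$.

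As a sanity check, for $k=0$ only $j=0$ contributes and $c_0=1$, matching $B_0$. For $k=1$ only $j=1$ contributes and $c_1=-1/2$, matching $B_1$.
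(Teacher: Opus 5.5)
Your proposal is correct and takes essentially the same route as the paper. You apply \Cref{fw-and-Li-w} to $e_0^{n-1}e_1$, simplify with $\Li_{e_0^a}(\gamma)=\Log(\gamma)^a/a!$, collect terms by $k$, and identify the coefficients through $\sum_{j\ge 0}\frac{(-1)^j}{j+1}(e^x-1)^j = \frac{x}{e^x-1}$. Reading off the inner sum directly as the $x^k$-coefficient of $(e^x-1)^j$ simply shortcuts the paper's multinomial-coefficient bookkeeping.
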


\begin{proof}
	According to \Cref{fw-and-Li-w}, we have
	\begin{align*}
		L_n(\log(\gamma)) &= \sum_{m=1}^\infty \frac{(-1)^{m+1}}{m} \sum_{\substack{e_0^{n-1}e_1=w_1\cdots w_m\\ |w_i| \geq 1}} \Li_{w_1}(\gamma) \cdots \Li_{w_m}(\gamma),
		\intertext{with the inner sum running over all decompositions of $e_0^{n-1}e_1$ into $m$ nonempty subwords $w_1,\ldots,w_m$. Let $\ell_i \geq 1$ be the length of $w_i$, then the subwords are given by $w_i = e_0^{\ell_i}$ for $1 \leq i \leq m-1$ and $w_m = e_0^{\ell_m-1}e_1$. So we write}
		\ldots &= \sum_{m=1}^\infty \frac{(-1)^{m+1}}{m} \sum_{\substack{\ell_1+\ldots+\ell_m = n\\ \ell_i \geq 1}} \Li_{e_0^{\ell_1}}(\gamma) \cdots \Li_{e_0^{\ell_{m-1}}}(\gamma) \Li_{e_0^{\ell_m-1}e_1}(\gamma).
		\intertext{The shuffle relation \Cref{lem: group-like and primitive coefficients}\,\ref{item: group-like coefficients} implies $\Li_{e_0^\ell}(\gamma) = \frac1{\ell!}\Log(\gamma)^{\ell}$,}
		\ldots &= \sum_{m=1}^\infty \frac{(-1)^{m+1}}{m} \sum_{\substack{\ell_1+\ldots+\ell_m = n\\ \ell_i \geq 1}} \frac{1}{\ell_1!}\cdots \frac1{\ell_{m-1}!} \Log(\gamma)^{\ell_1+\ldots+\ell_{m-1}} \Li_{\ell_m}(\gamma),
		\intertext{setting $k = \ell_1 + \ldots + \ell_{m-1}$:}
		\ldots &= \sum_{m=1}^\infty \frac{(-1)^{m+1}}{m} \sum_{k=m-1}^{n-1} \left( \sum_{\substack{\ell_1+\ldots+\ell_{m-1} = k\\ \ell_i \geq 1}}  \frac{1}{\ell_1!}\cdots \frac1{\ell_{m-1}!} \right)  \Log(\gamma)^k \Li_{n-k}(\gamma),
		\intertext{switching order of summation and shifting $m$ by $1$:}
		\ldots &= \sum_{k=0}^{n-1} \underbrace{\left( \sum_{m=0}^k  \frac{(-1)^m}{m+1} \sum_{\substack{\ell_1+\ldots+\ell_m = k\\ \ell_i \geq 1}}  \frac{1}{\ell_1!}\cdots \frac1{\ell_m!} \right)}_{\eqqcolon B(k) \in \bQ} \Log(\gamma)^k \Li_{n-k}(\gamma).
	\end{align*}
	We wish to determine the rational numbers $B(k)$ appearing as coefficients.
	For any tuple $\underline{\ell} = (\ell_1,\ldots,\ell_m)$ with $\ell_i \geq 1$ and $\sum \ell_i = k$, define a sequence $\underline{a} = (a_1,a_2,\ldots)$ where $a_i$ is the number of $i$'s appearing in $\underline{\ell}$. Thus, $a_i \geq 0$, $\sum a_i = m$, $\sum i a_i = k$ and $\prod \frac1{\ell_i!} = \prod \left(\frac1{i!}\right)^{a_i}$. For given $\underline{a}$, the number of ways to arrange the $a_i$ copies of $i$ in an $\underline{\ell}$-tuple for all $i$ is given by the multinomial coefficient $\binom{a_1+a_2+\ldots}{a_1, a_2,\ldots} = (\sum a_i)!/\prod a_i!$. So we can write
	\[ B(k) = \sum_{m=0}^k \frac{(-1)^m}{m+1} \sum_{\substack{a_1,a_2,\ldots \geq 0\\ \sum a_i = m\\ \sum ia_i = k}} \binom{m}{a_1,a_2,\ldots} \prod_{i=1}^\infty \left(\frac1{i!}\right)^{a_i}. \]
	
	We compute the generating function
	\begin{align*}
		\sum_{k=0}^\infty B(k) t^k &= \sum_{k=0}^\infty \sum_{m=0}^k \frac{(-1)^m}{m+1} \sum_{\substack{a_1,a_2,\ldots \geq 0\\ \sum a_i = m\\ \sum ia_i = k}} \binom{m}{a_1,a_2,\ldots} t^k \prod_{i=1}^\infty \left(\frac1{i!}\right)^{a_i}.\\
		\intertext{Using $t^k = t^{\sum ia_i} = \prod (t^i)^{a_i}$, we can eliminate the summation over $k$,}
		\ldots &= \sum_{m=0}^\infty \frac{(-1)^m}{m+1} \sum_{\substack{a_1,a_2,\ldots \geq 0\\ \sum a_i = m}} \binom{m}{a_1,a_2,\ldots} \prod_{i=1}^\infty \left(\frac{t^i}{i!}\right)^{a_i},
		\intertext{and use the multinomial theorem:}
		\ldots &= \sum_{m=0}^\infty \frac{(-1)^m}{m+1} \left( \sum_{i=1}^\infty \frac{t^i}{i!} \right)^m\\
		&= \sum_{m=0}^\infty \frac{(-1)^m}{m+1} (e^t - 1)^m\\
		&= \frac1{e^t-1} \sum_{m=0}^\infty \frac{(-1)^m}{m+1} (e^t - 1)^{m+1}\\
		&= \frac1{e^t-1} \log(e^t)\\
		&= \frac{t}{e^t-1}.
	\end{align*}
	We recognise this as the exponential generating function of the Bernoulli numbers, hence $B(k) = B_k/k!$.
\end{proof}

\section{Coordinates on the Selmer scheme and the motivic Kummer map}

We apply the construction of Selmer scheme coordinates from §\ref{sec:coordinates-on-selmer-scheme} to the case where $\Pi$ is the depth $N$ quotient of the full fundamental group or of the polylogarithmic fundamental group. Recall that for every $\sigma \in \Lie(U_S^{\MT})_{-n}$ and $f \in \Lie(\Pi^{\omega})_{-n}^{\vee}$ we have a Selmer function
\[ \Phi^{\sigma}_f\colon \Sel^{\mot}_{S,\Pi}(X) \to \bA^1 \]
defined by $\Phi^{\sigma}_f(c) = f(c(\sigma))$ for a graded cocycle $c\colon \Lie(U_S^{\MT}) \to \Lie(\Pi)$ (\Cref{def:selmer-scheme-functions}). When $f = f_w$ is a coefficient extraction functional for a word $w$ on $\{e_0,e_1\}$, we simplify the notation by writing $\Phi_w^{\sigma} \coloneqq \Phi^{\sigma}_{f_w}$.

\begin{thm}
	\label{selmer-coordinates-full-and-polylog}
	Let $\Sigma = \coprod_{n=1}^\infty \Sigma_{n}$ be a free homogeneous generating system of $U_S^{\MT}$.
	\leavevmode
	\begin{enumerate}[label=(\alph*)]
		\item For the polylogarithmic depth~$N$ Selmer scheme we have an isomorphism
		\[ \Sel_{S,\PL,N}^{\mot}(X) \cong \Spec \bQ[(\Phi^{\sigma}_w)_{n,\sigma,w}] \]
		with $1 \leq n \leq N$, $\sigma \in \Sigma_n$, and $w$ ranging over the polylogarithmic words of length~$n$ ($e_0$ and $e_1$ for $n = 1$; $e_0^{n-1}e_1$ for $n \geq 2$).
		\item For the full depth $N$ Selmer scheme we have an isomorphism
		\[ \Sel_{S,N}^{\mot}(X) \cong \Spec \bQ[(\Phi^{\sigma}_w)_{n,\sigma,w}] \]
		with $1 \leq n \leq N$, $\sigma \in \Sigma_n$, and $w$ ranging over the Lyndon words of length~$n$ on $\{e_0,e_1\}$.
	\end{enumerate}
\end{thm}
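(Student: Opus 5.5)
The plan is to reduce both parts to \Cref{coordinates-on-selmer-scheme}. That result says that for any finite-dimensional quotient $\Pi$, any free homogeneous generating system $\Sigma$ of $\Lie(U_S^{\MT})$, and any choice of bases $f_{n,1},\ldots,f_{n,d_n}$ of $\Lie(\Pi^{\omega})_{-n}^{\vee}$, the Selmer functions $\Phi^{\sigma}_{f_{n,i}}$ are free polynomial coordinates. So it suffices, in each case, to check that the coefficient extraction functionals $f_w$ for the stated words are well defined on $\Lie(\Pi^\omega)_{-n}$ and form a basis of its dual for $1 \le n \le N$. We also check that $\Lie(\Pi^\omega)_{-n}=0$ for $n>N$, so that only $\Sigma_1,\ldots,\Sigma_N$ contribute.

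For the vanishing: in both cases the depth~$N$ truncation kills all nested commutators of length $>N$. By \eqref{eq:weight-vs-lower-central-series}, or directly by \Cref{dim-of-graded-lie} and \Cref{polylog-lie-dimensions}, the graded pieces in half-weight $-n$ with $n>N$ are zero. For $n\le N$, the truncation $\Pi \twoheadrightarrow \Pi_N$ (respectively $\Pi_{\PL}\twoheadrightarrow \Pi_{\PL,N}$) induces an isomorphism on the half-weight $-n$ parts of the Lie algebras. This holds because the kernel is the closed ideal of commutators of length $>N$, which is concentrated in half-weights $\le -(N+1)$, since the generators $e_0,e_1$ have half-weight $-1$.

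For (a), we apply \Cref{polylog-fw-basis}. It gives that $f_{e_0},f_{e_1}$ form a basis of $\Lie(\Pi_{\PL}^\omega)_{-1}^\vee$ and that $f_{e_0^{n-1}e_1}$ forms a basis of $\Lie(\Pi_{\PL}^\omega)_{-n}^\vee$ for $n\ge2$. By the previous paragraph, these spaces agree with $\Lie(\Pi_{\PL,N}^\omega)_{-n}^\vee$ for $n\le N$. For (b), we apply \Cref{lyndon-fw-basis}, which gives that the $f_w$ with $w$ Lyndon of length $n$ form a basis of $\Lie(\pi_1^\omega(X,0))_{-n}^\vee$, and this space agrees with $\Lie(\Pi_N^\omega)_{-n}^\vee$ for $n\le N$. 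Substituting these bases into \Cref{coordinates-on-selmer-scheme} yields both isomorphisms, with $\Phi^\sigma_w=\Phi^\sigma_{f_w}$.

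The only point requiring care is the well-definedness of $f_w$ on the quotient Lie algebra. In (b) it is immediate, because in half-weight $-n$ with $n\le N$ nothing is quotiented out. In (a) it is the observation, already made in the proof of \Cref{polylog-fw-basis}, that $f_{e_0^{n-1}e_1}$ vanishes on Lie monomials of $e_1$-degree $>1$, which span the kernel of $\Lie(\pi_1^\omega)\to\Lie(\Pi_{\PL}^\omega)$ in each degree. I expect no real obstacle beyond this bookkeeping.
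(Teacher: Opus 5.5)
Your proposal is correct and is the same argument as the paper's: apply \Cref{coordinates-on-selmer-scheme} with the bases from \Cref{polylog-fw-basis} and \Cref{lyndon-fw-basis}. Your extra bookkeeping spells out details the paper leaves implicit: that the depth-$N$ truncation leaves half-weights $-n$ with $n\le N$ unchanged and kills the rest, and that $f_w$ descends to the quotient.
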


\begin{proof}
	This is \Cref{coordinates-on-selmer-scheme} applied with the bases of $\Lie(\Pi^{\omega})_{-n}^{\vee}$ from Propositions~\ref{polylog-fw-basis} and~\ref{lyndon-fw-basis}.
\end{proof}

\subsection{Constructing generators of $U_S^{\MT}$}
\label{sec:generators-of-lie-algebra}

So far we have not discussed the construction of the Lie algebra generators $\Sigma = \coprod_{n=1}^{\infty}$ of $U_S^{\MT}$. We give a brief summary of \cite[§3.2]{DC:mixedtate2}, which addresses this question. Set $Z = \Spec(\cO_{S})$ and write
\[ A(Z) \coloneqq \cO(U_S^{\MT}) \]
for the ring of (unipotent mixed Tate motivic) periods over~$Z$. We have $A(Z) = \bigoplus_{n \geq 0} A(Z)_n$ via the grading by half-weight. The subspace of primitive elements inside $A(Z)_n$ is denoted by $E_n(Z)$. There is a canonical isomorphism
\[ E_n(Z) \cong \Ext^1_{\MT(\cO_{K,S},\bQ)}(\bQ(0),\bQ(n)) \]
and $E_n(Z)$ is called the subspace of \emph{extensions}. 
We also have the subspace $D_n(Z) \subseteq A(Z)_n$ of \emph{decomposables}, defined as the image of the multiplication map 
\[ \bigoplus_{\substack{i+j=n\\ i,j \geq 1}} A(Z)_i \otimes A(Z)_j \to A(Z)_n. \]
The subspaces $D_n(Z)$ and $E_n(Z)$ are linearly disjoint, thus we have an injective linear map
\[ E_n(Z) \hookrightarrow A(Z)_n/D_n(Z). \]
Its linear dual is canonically isomorphic to the abelianisation map
\[ \Lie(U_S^{\MT})_{-n} \twoheadrightarrow \Lie(U_S^{\MT})_{-n}^{\ab}. \]
Thus, an element~$\overline{\sigma}$ of $\Lie(U_S^{\MT})_{-n}^{\ab}$ can be defined by specifying its values on a basis of $E_n(Z)$. A lift $\sigma$ of $\overline{\sigma}$ along the abelianisation map is uniquely determined by requiring that $\sigma$ vanishes on a given complementary subspace $P_n$ of $E_n(Z) \oplus D_n(Z) \subseteq A(Z)_n$.

Elements of $A(Z)_n$ can be constructed via motivic iterated integrals: for any word $w$ in $\{e_0,e_1\}$ and any $S$-integral point $\alpha \in X(\cO_{K,S})$, we have an element $\Li_w^{\fu}(\alpha) \in A(Z)_n$. By allowing $\alpha$ to be a tangential base point, we also have the motivic multiple zeta values $\zeta^{\fu}(w) \coloneqq \Li^{\fu}_w(\vec{1}_0) \in A(Z)_n$. Constructing bases of the spaces $A_n(Z)$ and their subspaces $E_n(Z)$ and $D_n(Z)$ and a complementary subspace~$P_n$ is a difficult problem. That this can be achieved via iterated integrals on $\bP^1 \smallsetminus \{0,1,\infty\}$ is the content of a conjecture by Goncharov. See \cite[§4.3]{CDC:polylog1} for a calculation where such bases are constructed in the case $k = \bQ$, $S = \{3\}$, $n=1,2,3$.

For $n = 1$, a basis of $E_1(Z) = A(Z)_1$ can always be constructed via motivic logarithms. For $\alpha \in \cO_{K,S}^{\times}$ we set $\log^{\fu}(\alpha) \coloneqq \Li_{e_0}^{\fu}(\alpha) \in A(Z)_1$. The map
\[ \log^{\fu}\colon \cO_{K,S}^{\times} \to A(Z)_1 \]
satisfies $\log^{\fu}(\alpha\beta) = \log^{\fu}(\alpha) + \log^{\fu}(\beta)$, hence it extends to a $\bQ$-linear map on $\cO_{K,S}^{\times} \otimes \bQ$.

\begin{lemma}
	\label{log-basis}
	Let $\alpha_1,\ldots,\alpha_r$ be a basis of $\cO_{K,S}^{\times} \otimes \bQ$. Then the elements $\log^{\fu}(\alpha_1),\ldots,\log^{\fu}(\alpha_r)$ form a basis of $E_1(Z) = A(Z)_1$ and we can choose the Lie algebra generators $\Sigma_1 = \{ \tau_1,\ldots,\tau_r\}$ in half-weight $-1$ as the dual basis. \qedhere
\end{lemma}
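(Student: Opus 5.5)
The plan is to show three things: $A(Z)_1 = E_1(Z)$, the map $\log^{\fu}\colon \cO_{K,S}^{\times}\otimes\bQ \to E_1(Z)$ is an isomorphism, and the dual basis is a legitimate choice of $\Sigma_1$.

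\emph{Step 1: $A(Z)_1 = E_1(Z)$.} In half-weight~$1$ there are no decomposables, since $D_1(Z)$ would require $i+j=1$ with $i,j\geq 1$. Moreover every element of $A(Z)_1$ is primitive. Indeed, the coproduct on $A(Z)=\cO(U_S^{\MT})$ is graded, and $A(Z)_0=\bQ$ because $U_S^{\MT}$ is pro-unipotent with $\Lie$ concentrated in negative degrees. Hence $\Delta a \in A_0\otimes A_1 \oplus A_1\otimes A_0$ for $a\in A(Z)_1$, and the counit axioms force $\Delta a = a\otimes 1 + 1\otimes a$. It follows that $E_1(Z)=A(Z)_1$, and in particular $\Lie(U_S^{\MT})_{-1} = \Lie(U_S^{\MT})^{\ab}_{-1} = E_1(Z)^\vee$, since there are no brackets landing in degree $-1$.

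\emph{Step 2: $\log^{\fu}$ is an isomorphism.} Via the canonical identification $E_1(Z)\cong \Ext^1_{\MT(\cO_{K,S},\bQ)}(\bQ(0),\bQ(1)) = \cO_{K,S}^{\times}\otimes\bQ$, I will check that $\log^{\fu}(\alpha)$ corresponds to the Kummer extension class of $\alpha$. This is the extension obtained from $\pi_1^{\mot}(\bG_m;1,\alpha)$, the motivic path torsor, and the functoriality of $\Li^{\fu}_{e_0}$ for the map $X\to\bG_m$ lets one compute on $\bG_m$. I expect this to be the main obstacle: it requires unwinding the definition of the motivic iterated integral $\Li^{\fu}_{e_0}(\alpha)$ as a matrix coefficient of the Kummer motive and matching it with Deligne--Goncharov's identification of $\Ext^1$ with $K_1(\cO_{K,S})_{\bQ}$. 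I would cite \cite[§3.2]{DC:mixedtate2} or \cite{deligne-goncharov} for this compatibility rather than rederive it.

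Granted Step 2, the additivity $\log^{\fu}(\alpha\beta)=\log^{\fu}(\alpha)+\log^{\fu}(\beta)$ makes the map $\bQ$-linear, and it is an isomorphism. Hence the images of a basis $\alpha_1,\dots,\alpha_r$ form a basis of $E_1(Z)$. Alternatively, once injectivity is known, the dimension count $r=\#S+r_1+r_2-1$ from \eqref{eq:K-group-dimensions} suffices.

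\emph{Step 3: $\Sigma_1$.} By Step~1, $\Lie(U_S^{\MT})^{\ab}_{-1}=E_1(Z)^\vee$. By the discussion in §\ref{sec:generators of unipotent MT Galois group}, any basis of this space is a valid choice of $\Sigma_1$, and since $P_1=0$ there is no choice of lift. We therefore take the dual basis $\tau_i$ defined by $\tau_i(\log^{\fu}(\alpha_j))=\delta_{ij}$.
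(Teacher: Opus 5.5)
Your proposal is correct and supplies exactly the reasoning the paper leaves implicit. The paper gives no separate proof of this lemma; it relies on the degree bookkeeping you carry out in Steps~1 and~3 ($D_1(Z)=0$, all of $A(Z)_1$ primitive, $\Lie(U_S^{\MT})_{-1}=\Lie(U_S^{\MT})^{\ab}_{-1}=E_1(Z)^{\vee}$, $P_1=0$), together with the Deligne--Goncharov identification $\Ext^1_{\MT(\cO_{K,S},\bQ)}(\bQ(0),\bQ(1))\cong\cO_{K,S}^{\times}\otimes\bQ$ via Kummer motives.

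Step~2 is also less of an obstacle than you fear. By construction, $\log^{\fu}(\alpha)$ is the image of the class of the Kummer torsor $\pi_1^{\mot}(\bG_m;\vec{1}_0,\alpha)$ under $\rH^1(G_S^{\MT},\bQ(1)^{\omega})\cong E_1(Z)$. So only the Deligne--Goncharov identification itself needs to be cited. Working on $\bG_m$, as you propose, is needed anyway to make sense of $\log^{\fu}(\alpha)$ when $1-\alpha\notin\cO_{K,S}^{\times}$.
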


\subsection{The motivic Kummer map}

If $\alpha \in X(\cO_{K,S})$ is an $S$-integral point of~$X$, we have a \emph{motivic path torsor} $\pi_1^{\mot}(X;0,\alpha)$ \cite{deligne-goncharov}. This is a $\pi_1^{\mot}(X,0)$-torsor in $\MT(\cO_{K,S},\bQ)$. For any quotient $\pi_1^{\mot}(X,0) \twoheadrightarrow \Pi$, define $\pathtorsor{\alpha}{0}$ as the pushout of $\pi_1^{\mot}(X;0,\alpha)$ along the quotient map. Then $\pathtorsor{\alpha}{0}$ is a $\Pi$-torsor in $\MT(\cO_{K,S},\bQ)$, hence it defines a $\bQ$-rational point of the Selmer scheme $\Sel_{S,\Pi}^{\mot}(X)$. The \emph{motivic Kummer map} is defined as
\begin{align*}
	j\colon X(\cO_{K,S}) &\to \Sel_{S,\Pi}^{\mot}(X)(\bQ),\\
	\alpha &\mapsto \left[\pathtorsor{\alpha}{0}\right]
\end{align*}
The motivic Kummer map can also be defined when $\alpha$ is an $S$-integral tangent vector.

Let $\Pi = \pi_1^{\mot}(X,0)$ be the full fundamental group. The Selmer coordinates $\Phi_w^{\sigma}$ of a Kummer element in $\Sel_{S,\Pi}^{\mot}(X)$ are given as follows:

\begin{prop}
	\label{kummer-element-coordinates}
	Let $\sigma \in \Lie(U_S^{\MT})_{-n}$ and let $w$ be a word of length~$n$ on $\{e_0,e_1\}$. Then for $\alpha \in X(\cO_{K,S})$ we have
	\[ \Phi^{\sigma}_w(j(\alpha)) = \langle \sigma, \Li_w^{\fu}(\alpha) \rangle. \]
\end{prop}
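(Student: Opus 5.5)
Unwinding the definitions, $\Phi^{\sigma}_w(j(\alpha)) = f_w(C(\sigma))$, where $C$ is the graded Lie cocycle attached to the torsor $\pathtorsor{\alpha}{0}$ (\Cref{thm: Selmer scheme via Lie cocycles}). The plan is to compute $C$ through the Hopf algebra cocycle of \Cref{thm: Selmer scheme via Hopf cocycles} and to compare it with the definition of the motivic iterated integral $\Li^{\fu}_w(\alpha)$ as a matrix coefficient. Concretely, by \Cref{def: canonical path} the canonical path $p^{\can} = \pth[\can]{\alpha}{0}$ is the unique $\bG_m$-invariant point of $\pathtorsor[\omega]{\alpha}{0}$. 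The canonical cocycle is $c(g) = (p^{\can})^{-1} g(p^{\can})$ for $g \in U_S^{\MT}$. Since the point is $\bG_m$-invariant, this $c$ is the $\bG_m$-equivariant cocycle corresponding to $j(\alpha)$ under \Cref{thm: equivariant cocycles description of Selmer scheme}. Using $p^{\can}$ to identify $\pathtorsor[\omega]{\alpha}{0}$ with $\Pi^{\omega}$, we may view the coordinate function $\Li_w$ of \S\ref{iterated-integral-functions} as a function on the torsor. The motivic period $\Li^{\fu}_w(\alpha) \in A(Z) = \cO(U_S^{\MT})$ is then, by its construction, the function $g \mapsto \Li_w\bigl((p^{\can})^{-1} g(p^{\can})\bigr) = \Li_w(c(g))$; equivalently, $\Li^{\fu}_w(\alpha) = c^{\sharp}(\Li_w)$. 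This identification is the point I expect to need the most care: one must check that it matches the normalisation of unipotent motivic iterated integrals in \cite{deligne-goncharov} and \cite{DC:mixedtate2}, in particular that the de Rham path used there is exactly the $\bG_m$-invariant canonical path, and that the conventions on left and right actions agree.

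Given that identification, the claim reduces to a pairing computation. By \Cref{lem:lielike}, the element $\sigma \in \Lie(U_S^{\MT})$ is a derivation $\cO(U_S^{\MT}) \to \bQ$ at the identity, and $\langle \sigma, \Li^{\fu}_w(\alpha)\rangle = \sigma(c^{\sharp}\Li_w) = (\rd c(\sigma))(\Li_w)$. By \Cref{thm: Lie cocycle from group cocycle} and \Cref{rem: Hopf coycle recovers group and Lie cocycle}, $\rd c(\sigma) = C(\sigma)$ is the Lie-like element of $\bQ\llangle e_0,e_1\rrangle$ obtained by restricting the Hopf cocycle $(c^\sharp)^\vee$. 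Evaluating the function $\Li_w = f_w$ on the element $C(\sigma)$ of $\cH(\Pi^\omega) = \bQ\llangle e_0,e_1\rrangle$ returns its $w$-coefficient, which is $f_w(C(\sigma)) = \Phi^{\sigma}_w(j(\alpha))$.

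Finally, I would check gradings. Both sides vanish unless $\sigma$ has half-weight $-n$ and $w$ has length $n$. This holds because $\Li^{\fu}_w(\alpha) \in A(Z)_n$ and $c$ is $\bG_m$-equivariant, so only the degree-$n$ parts pair nontrivially.
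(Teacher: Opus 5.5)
Your proposal is correct and is essentially the paper's argument. The paper likewise takes the $\bG_m$-equivariant cocycle $c_\alpha$ of the path torsor, uses $\Li^{\fu}_w(\alpha) = c_\alpha^{\sharp}(\Li_w)$, and dualises the associated Hopf algebra cocycle to get $f_w(c_\alpha(\sigma)) = (c_\alpha^{\sharp} f_w)(\sigma) = \langle \sigma, \Li^{\fu}_w(\alpha)\rangle$. The paper simply asserts $\Li^{\fu}_w(\alpha) = c_\alpha^{\sharp}(\Li_w)$ as the description of the motivic iterated integral, so the normalisation check you flag is not needed there, and your closing grading check is redundant because the statement already fixes the degrees of $\sigma$ and $w$.
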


\begin{proof}
	Let $c_{\alpha}\colon U_S^{\MT} \to \Pi^{\omega}$ be the $\bG_m$-equivariant cocycle representing the path torsor $j(\alpha) = [\pi_1^{\mot}(X;0,\alpha)]$. The motivic polylogarithm $\Li_w^{\fu}(\alpha)$ is given by $c_{\alpha}^{\sharp}(\Li_w)$. Write $c_{\alpha}$ also for the extension to a Hopf algebra cocycle $\cH(U_S^{\MT}) \to \cH(\pi_1^{\mot}(X,0))$. By definition of the Selmer function $\Phi^{\sigma}_w$, we have
	\[ \Phi^{\sigma}_w(j(\alpha)) = f_w(c_{\alpha}(\sigma)) = (c_{\alpha}^{\sharp}f_w)(\sigma) = \langle \sigma, \Li_w^{\fu}(\alpha) \rangle. \qedhere \]
\end{proof}

\Cref{kummer-element-coordinates} gives an idea of why it can be advantageous to work on the level of Hopf algebras: the definition of the Selmer scheme coordinates uses linear functions on the Lie algebra $\Lie(\Pi^{\omega})$, whereas the construction of the Lie algebra generators $\Sigma = \coprod_{n=1}^{\infty} \Sigma_n$ of $U_S^{\MT}$ involves specifying their values on motivic periods, which are functions on the group $U_S^{\MT}$. The complete Hopf algebras $\cH(U_S^{\MT})$ and $\cH(\Pi^{\omega})$ provide an ambient structure for both the groups and their Lie algebras, giving the flexibility to freely switch between the two.

\printbibliography

\end{document}